\newtheorem{lemma}{Lemma}[section]
\newtheorem{theorem}{Theorem}[section]
\newtheorem{corollary}{Corollary}[section]
\newtheorem{definition}{Definition}[section]
\newtheorem{example}{Example}[section]
\newtheorem{remark}{Remark}[section]
\def\section{\@startsection{section}{1}%
%\z@{2\linespacing\@plus\linespacing}{2\linespacing}%
\z@{1\linespacing\@plus\linespacing}{1\linespacing}%
{\bf\centering}}
\def\subsection{\@startsection{subsection}{0}%
\z@{\linespacing\@plus\linespacing}{\linespacing}%
{\bf}}
\DeclareMathOperator{\supp}{supp}
\DeclareMathOperator{\Dom}{Dom}
\DeclareMathOperator{\Spec}{Spec}
\DeclareMathOperator{\dist}{dist}
\DeclareMathOperator{\loc}{loc}
\DeclareMathOperator{\slim}{s-lim}
\providecommand{\pro}[1]{(#1_t)_{t \geq 0}}
\newcommand{\cK}{\mathcal{K}}
\newcommand{\cB}{\mathcal{B}}
\newcommand{\cE}{\mathcal{E}}
\newcommand{\R}{\mathbf{R}}
\newcommand{\1}{\mathbf{1}}
\newcommand{\pr}{\mathbf{P}}
\newcommand{\ex}{\mathbf{E}}
\newcommand{\Rd}{\mathbf{R}^d}
\newcommand{\N}{\mathbf{N}}
\begin{document}
\title[Zero-energy bound state decay for non-local Schr\"odinger operators]
{Zero-energy bound state decay for non-local Schr\"odinger operators}
\author{Kamil Kaleta and J\'ozsef L\H orinczi}
\address{Kamil Kaleta, Faculty of Pure and Applied Mathematics \\
 Wroc{\l}aw University of Science and Technology
\\ Wyb. Wyspia{\'n}skiego 27, 50-370 Wroc{\l}aw, Poland}
\email{kamil.kaleta@pwr.edu.pl}

\address{J\'ozsef L\H orinczi,
Department of Mathematical Sciences, Loughborough University \\
Loughborough LE11 3TU, United Kingdom}
\email{J.Lorinczi@lboro.ac.uk}

\thanks{\emph{Key-words}: non-local Schr\"odinger operator, zero eigenvalues and resonances, generalized
eigenfunctions, Feynman-Kac semigroup, symmetric L\'evy process  \\
\noindent
2010 {\it MS Classification}: Primary 47D08, 60G51; Secondary 47D03, 47G20 \\
\noindent
KK was supported by the National Science Center (Poland) grant 2015/18/E/ST1/00239 and by the Alexander von Humboldt
Foundation (Germany). JL thanks IHES, Bures-sur-Yvette, where part of this paper has been written.
}

\begin{abstract}
We consider solutions of the eigenvalue equation at zero energy for a class of non-local Schr\"odinger operators
with potentials decreasing to zero at infinity. Using a path integral approach, we obtain detailed results on the
spatial decay of both $L^2$ and resonance solutions at infinity. We highlight the interplay of the kinetic term
and the potential in these decay behaviours, and identify the decay mechanisms resulting from specific balances
of global lifetimes with or without the potential.
\end{abstract}

\maketitle

\baselineskip 0.5 cm

%\medskip
\section{Introduction}
\noindent
The study of spectral properties of Schr\"odinger operators $H=-\frac{1}{2}\Delta + V$ on $L^2(\R^d)$, featuring the
Laplacian and a potential $V$, has a long history in mathematics. Potentials decaying to zero at infinity produce a
fascinating variety of spectral behaviours and phenomenology, including the possibility of finite or countably infinite
discrete spectra, (dense sets of) embedded eigenvalues or singular continuous spectrum, resonances, criticality, Efimov
effect, enhanced binding, or scattering. In this, there is a split of qualitative behaviours according to the rates of
decay of the potential (which led to various concepts of `long range' and `short range' potentials). The results indicate
that the existence of embedded eigenvalues is a long range effect, and the appearance of positive point spectrum is a
combination of slow decay and oscillations of the potential. For surveys we refer to \cite{RS3,EK,CK01,DS07} and the
numerous references therein.

Zero-energy level, which coincides with the edge of the continuous spectrum, often marks a borderline between various
regimes of spectral behaviour, in particular, between existence and non-existence of bound states, thus also shedding
light on the mechanisms of ``birth" of such states. Whether zero is an eigenvalue is, in general, a difficult problem.
Some early results on existence or non-existence of zero-energy eigenvalues go back to the papers
\cite{A70,N77,K78,JK79,L81,S81,R87,K89}. For potentials which are negative at infinity, decaying at a rate $V(x)\asymp -c
|x|^{-\gamma}$, $c > 0$, as $|x|\to\infty$, and satisfying some further conditions, it has been established in \cite{FS04}
that for $\gamma \in (0,2)$ zero is not an eigenvalue, see also \cite{DS09,SW}. For potentials that are positive at infinity
the situation changes \cite{Y82,N94}. In \cite{BY90} it has been shown that for Schr\"odinger operators on $L^2(\R^3)$
with rotationally symmetric potentials $V \in L^p(\R^3)$, $p > \frac{3}{2}$, whose positive part satisfies $V^+(x) \leq
C|x|^{-2}$ for $x$ large enough, zero is not an eigenvalue corresponding to a positive eigenfunction if $C =\frac{3}{4}$,
while a positive $L^2$-eigenfunction does exist if $C > \frac{3}{4}$. (The result holds more generally for non-symmetric
potentials and higher dimensions as well.) For some further results on the existence of compactly supported eigenfunctions
at zero-eigenvalue for compactly supported $V \in L^p(\R^d)$, $p < \frac{d}{2}$, we refer to \cite{KN00,KT02}. Apart from
some cases using direct methods of analysis, the two most used techniques leading to these results are based on unique
continuation or resolvent expansions.

Recently, a theory of non-local Schr\"odinger operators started to shape up, which has enriched the range of spectral
phenomena and opened a new perspective to the understanding of classical Schr\"odinger operators as a specific case.
Such operators arise by replacing the Laplacian with a suitable pseudo-differential operator such as the fractional
Laplacian $(-\Delta)^{\alpha/2}$, $0 < \alpha < 2$. In the present paper our primary aim is an analysis of eigenfunction
decay at zero-eigenvalue or zero-resonance for a class of non-local Schr\"odinger operators $H = -L + V$ on $L^2(\R^d)$,
with decaying potentials. One of our goals is to highlight the role of the potential in an interplay with the kinetic
operator term $-L$ in generating the decay behaviours.

The occurrence of zero or strictly positive eigenvalues for non-local Schr\"odinger operators just begins to be studied.
In the recent paper \cite{LS17}, see Theorems 2.8 and 2.10, two sets of potentials generating zero-eigenvalues or
zero-resonances for massless relativistic Schr\"odinger operators in dimension one have been constructed (as well as
examples leading to a strictly positive eigenvalue). More recently, in \cite{JL18} this has been generalized to fractional
Laplacians of all order and arbitrary dimensions. Let $\kappa>0$, $\alpha \in (0,2)$, and $P$ be a harmonic polynomial,
homogeneous of degree $l \geq 0$, i.e., satisfying $P(cx) = c^l P(x)$ for all $c>0$, and $\Delta P = 0$. Denote $\mu =
d+2l$, and consider the potentials and functions
\begin{equation}
\begin{aligned} \label{eq:motiv_ex}
V_{\kappa,\alpha}(x) & =  -\frac{2^\alpha}{\Gamma(\kappa)} \Gamma\left(  \frac{\mu+\alpha}{2}\right)
\Gamma\left( \frac{\alpha}{2} + \kappa \right)(1+|x|^2)^\kappa \,_2\textbf{F}_1\left( \left. \begin{array}{c}
				\frac{\mu+\alpha}{2} \quad \frac{\alpha}{2} + \kappa \\ \frac{\mu}{2}
			\end{array}  \right| -|x|^2 \right) \\
\varphi_\kappa(x) & = \frac{P(x)}{(1+|x|^2)^\kappa},
\end{aligned}
\end{equation}
where $_2\textbf{F}_1$ is Gauss' hypergeometric function. Then
\begin{equation*}
(-\Delta)^\frac{\alpha}{2} \varphi_\kappa + V_{\kappa,\alpha} \varphi_\kappa = 0
\end{equation*}
holds in distributional sense with $\varphi_\kappa \in L^2(\R^d)$ if $\kappa \geq \frac{\mu}{4}$, and
\begin{equation}
\label{exdecays}
|V_{\kappa,\alpha}(x)| =
\left\{
\begin{array}{lcl}
O\left( |x|^{-\alpha} \right) &\mbox{if}& \kappa \in (l, \frac{\mu}{2}) \setminus \{ \frac{\mu-\alpha}{2} \} \\
\vspace{0.1cm}
O\left( |x|^{-2\alpha} \right) &\mbox{if}& \kappa = \frac{\mu-\alpha}{2} \\
\vspace{0.1cm}
O\left( |x|^{-\alpha}\log|x| \right) &\mbox{if}& \kappa = \frac{\mu}{2} \\
\vspace{0.1cm}
O\left( |x|^{2\kappa-\mu-\alpha} \right) &\mbox{if}& \kappa \in (\frac{\mu}{2}, \frac{\mu+\alpha}{2}).
\end{array}
\right.
\end{equation}
Furthermore, for large $|x|$ we have that
%\vspace{0.1cm}
%\begin{enumerate}
%\item[(1)]
%$V_{\kappa,\alpha}(x) < 0$ if $\kappa \in \left( l , \frac{\mu-\alpha}{2} \right]$
%\medskip
%\item[(2)]
%$V_{\kappa,\alpha}(x) > 0$ if $\kappa \in \left( \frac{\mu-\alpha}{2}, \frac{\mu+\alpha}{2}  \right)$.
%\end{enumerate}
\begin{eqnarray}
&& \hspace{-4.4cm} V_{\kappa,\alpha}(x) < 0 \quad \mbox{if} \quad \kappa \in \big( l , \frac{\mu-\alpha}{2} \big]
\label{neg} \\
&& \hspace{-4.4cm} V_{\kappa,\alpha}(x) > 0 \quad \mbox{if} \quad \kappa \in \big(\frac{\mu-\alpha}{2},\frac{\mu+\alpha}{2}\big).
\label{pos}
\end{eqnarray}

%\medskip
We note that the above examples by no means indicate that zero eigenvalues are common or easy to locate. By using
methods of operator analysis, we have established further cases or conditions of existence as well as non-existence
of embedded eigenvalues in \cite{LS}. For $d=3$ it is known that provided $|V|$, $|x\cdot\nabla V|$ and $|x\cdot\nabla
(x\cdot\nabla V)|$ are bounded by $C(1+x^2)^{-1/2}$, with a small $C > 0$, jointly imply that $(-\Delta)^{1/2}+V$ has
no non-negative eigenvalue \cite{RU12}. Related work on unique continuation for fractional Schr\"odinger equations
imply further non-existence results \cite{FF14,S14,S15,R15}. Some further recent work include non-positive potentials
with compact support and $L$ chosen to be the massive relativistic operator \cite{M06}, and a class of generalized
Schr\"odinger operators \cite{C17}.

In our previous works \cite{KL15,KL17} we have investigated eigenfunction decay for a large class of non-local
Schr\"odinger operators. Using a probabilistic approach, $L$ was assumed to be the infinitesimal generator of a
`jump-paring' L\'evy process, i.e., having the property that
$$
\int_{|x-y|>1, \, |y|>1} \nu(|x-y|)\nu(|y|)dy \leq C \nu(|x|), \quad |x| > 1,
$$
where $\nu$ is the L\'evy jump density entering the symbol of the kinetic part $-L$ of the operator (for details see
the next section), and $C > 0$ is a constant. This condition means that double (and by iteration, any multiple) large
jumps are stochastically dominated by single large jumps. As we have shown, there is a large family of such operators
and related random processes of interest, including the fractional Laplacian and isotropic stable processes,
relativistic Laplace operators and relativistic stable processes, and many others.

In the first paper quoted above we considered confining potentials $V$, i.e., increasing to infinity as $|x| \to \infty$,
and found that, for instance, if the potentials grow at a sufficiently regular rate in the sense that $\sup_B V \leq c
\inf_B V$ for all unit balls $B$ far enough from the origin, and where $c > 0$ is a constant, then the ground state
(eigenfunction corresponding to the bottom of the spectrum) $\varphi_0$ behaves like $\varphi_0 \asymp \frac{\nu}{V}$. In
this case the contributions of the kinetic and potential terms in $H$ separate neatly. In the second paper we considered
decaying potentials, i.e., decreasing to zero as $|x| \to \infty$, such that $\lambda_0 = \inf \Spec H < 0$. In this case
both $V$ and $\nu$ decrease to zero, and now the interplay between the two terms determining the decay is much more
intricate. We have shown that the decay behaviour depends on how some quantities related to intrinsic jump preferences of
the L\'evy processes compare with the width of the gap between the ground state eigenvalue and the continuum edge.
Specifically, if $\lambda_0$ is sufficiently low-lying, then $\varphi_0 \asymp \nu$. Moreover, when the gap is small and
$\nu$ is chosen to have an increasingly light tail proceeding from a polynomial or sub-exponential, through an exponential,
to a super-exponential decay rate, a sharp regime change in the decay of $\varphi_0$ can be observed and the rate of decay
of the ground state suddenly becomes slower than the rate of decay of $\nu$, see also \cite{KL16a}.

The framework we have developed allows also to gain insight into the mechanisms generating these decay behaviours.
When $L$ is chosen such that the process $\pro X$ it generates has the jump-paring property and $V$ is an increasing
potential, we obtain
\begin{equation}
\label{meanntime}
\varphi_0(x) \, \asymp \, \Lambda_{B(x,1)} \nu(x), \quad \mbox{with} \quad \Lambda_{B(x,1)} =
\ex^x\left[\int_0^{\tau_{B(x,1)}} e^{-\int_0^t V(X_s)ds} dt \right],
\end{equation}
where $\tau_{B(x,1)} = \inf\{t> 0: \, X_t \not\in B(x,1)\}$ denotes the first exit time of the process from a unit ball
centered in $x$. This means that the fall-off rate depends on how soon the process perturbed by the potential on average
leaves unit balls far out. Dependent on how negative $\lambda_0$ is and how light the tails of $\nu$ are, this relationship
is preserved for decaying potentials up to a point when too light tails cause a drop below a critical level in the
domination of single large jumps, which the ground state energy cannot compensate. Then the sojourn times due to multiple
re-entries in unit balls of the process become comparable with the exit times piling up `backlogs' in the fall-off events,
and this slows the decay of $\varphi_0$ down. Indeed, when $\nu$ has so light tails that the jump-pairing condition no
longer holds, $\varphi_0$ decays necessarily (and possibly much) slower than $\nu$.

Our concern in the present paper is how $\varphi_0$ decays in the conditions when $\lambda_0 = \inf \Spec H = 0$. We note
that in classical results on ground state decay of usual (local) Schr\"odinger operators by Agmon, Carmona, and other
authors (see a discussion, e.g., in \cite[Ch. 3]{LHB}), a gap between the lowest eigenvalue and the edge of the continuous
spectrum is an essential ingredient, and the results break down when this gap is brought to zero.
% and the problem becomes indeed, this difference usually gives the rate of exponential decay. very delicate.
In \cite{ABG} it has been shown that for $d \geq 2$ a zero-energy eigenfunction $\varphi$ for a potential $V$ satisfying
$\big\||x|^{2- d/p}V\big\|_{L^p(\R^d)} < \infty$, for some $p \geq 1$ possibly being infinite, implies a power-law lower
bound on the decay given by $|x|^{-a}\varphi \not\in L^2(\R^d)$, with some $a \in (0,\infty)$. The authors also showed that
for a potential slower than $|x|^{-2}$, exponential decay of a zero-energy eigenfunction is possible. For some upper bounds
on the decay rates see \cite{H}. A more encompassing study of exponential decay has been made in \cite{HS}, in which also
decays faster than exponential have been ruled out.

Our framework for non-local Schr\"odinger operators with $\lambda_0 < 0$ discussed above does not extend to the zero
eigenvalue case, however, we will develop here a new framework by using a restricted set of operators $L$, which require
a doubling property of the jump kernels instead of the more general jump-paring property. Since now $\lambda_0 = 0$, there
is a complete lack of an energetic advantage from negative eigenvalues, however slight, leading to the behaviours discussed
above, and now the influence of the potential appears through vestigial effects resulting from its sign at infinity. This
will also reflect in the fact that in this case the contributions of the exits and the re-entries in local neighbourhoods
even out and add up to such an extent that \eqref{meanntime} obviously no longer holds, and the decay events become now
governed by global lifetimes such as
$$
\Lambda_{B(x,|x|/2)} = \ex^x \left[\int_0^{\tau_{B(x,|x|/2)}} e^{- \int_0^t V(X_s) ds} dt\right].
$$
From this expression it can be appreciated that in the $\lambda_0 = 0$ case there is a rather delicate difference between
the behaviour of paths under the perturbing potential and free fluctuations of the process, and this slight difference is
responsible even for the very existence of a ground state at zero eigenvalue (for further details see Section 6).

Below we start from the assumption that, for a class of operators $L$ and decaying $V$, the eigenvalue equation
$(-L+V)\varphi = 0$ is satisfied by a function $\varphi \in L^p(\R^d)$, for some $p > 0$, describing zero-energy
eigenfunctions (when $p=2$) or zero-resonances (when $p\neq 2$). Then we will study the asymptotic behaviour of
$\varphi(x)$ as $|x|\to\infty$ which, following from the choice of the input operators, has a pointwise decay to zero.
%As it will be seen below, the cases of potentials being positive or negative at infinity need separate discussions,
%the latter case being more difficult.
Our main results for asymptotically positive potentials are Theorems \ref{thm:upper_bound_pos}-\ref{thm:lower_bound_pos},
giving upper and lower bounds on $\varphi$ when these functions are positive or when they may have zeroes and distinct
nodal domains. For asymptotically negative potentials we have Theorem \ref{thm:upper_bound_neg}, giving upper bounds. We
note that our results apply to both zero-energy eigenfunctions and zero-resonances. As it will be seen in applications
to specific cases (Theorems \ref{thm:polynomial}-\ref{thm:layered}), these estimates perform remarkably well, giving
exact or close hits of the precise asymptotics in \eqref{exdecays} above.

The remainder of this paper is organized as follows. In Section 2 we introduce the class of non-local Schr\"odinger
operators considered, and briefly summarize some of the relevant properties of the jump processes used in their
Feynman-Kac representations. We also give an expression of the solutions of the eigenvalue equation using a path
integral representation, which will be a key formula used throughout below. In Section 3 we derive and prove some
self-improving upper and lower estimates on the solutions of related harmonic functions, on which our main conclusions
will rely. In Sections 4 and 5 we obtain the decay behaviours separately for potentials positive and negative at
infinity, respectively. In the concluding Section 6 we illustrate these results on specific examples, and discuss
some mechanisms lying behind these decays.

\section{Non-local Schr\"odinger operators and Feynman-Kac semigroups}

\subsection{Non-local Schr\"odinger operators and related random processes}
%\label{sec:Prel}
We start by a general remark on notation. A ball centered in $x \in \R^d$ and of radius $r > 0$ will be denoted by
$B(x,r)$. We write $a \wedge b = \min\{a,b\}$ and $a \vee b = \max\{a,b\}$. The notation $C(a,b,c,...)$ will be used
for a positive constant dependent on parameters $a,b,c,...$, dependence on the operator $L$ or, equivalently, on the
related L\'evy process $\pro X$ will be indicated by $C(L)$ and $C(X)$, while dependence on the dimension $d$ is
assumed without being stated explicitly. Since constants appearing in definitions and statements play a role, they
will be numbered $C_1, C_2, ...$ so that they can be tracked. We will also use the notation $f \asymp C g$ meaning
that $C^{-1} g \leq f \leq Cg$ with a constant $C \geq 1$, while $f \asymp g$ means that there is a constant $C \geq
1$ such that the latter holds. By $f \approx g$ we understand that $\lim_{|x| \to \infty} f(x)/g(x)=1$. In proofs
$c_1, c_2, ...$ will be used to denote auxiliary constants.

Consider the pseudo-differential operator $L$ with symbol $\psi$, defined by
\begin{align}
\label{def:gen}
\widehat{L f}(\xi) = -\psi(\xi) \widehat{f}(\xi), \quad \xi \in \R^d, \; f \in \Dom(L),
\end{align}
with dense domain $\Dom (L)= \{f \in L^2(\R^d): \psi \widehat f \in L^2(\R^d)\} \subset L^2(\R^d)$, and where the
hats denote Fourier transform. We assume the symbol to be of the form
\begin{align}
\label{eq:Lchexp}
\psi(\xi) = A \xi \cdot \xi + \int_{\R^d\backslash \left\{0\right\}} (1-\cos(\xi \cdot z)) \nu(dz),
\end{align}
where $A=(a_{ij})_{1\leq i,j \leq d}$ is a symmetric non-negative definite matrix, and $\nu$ is a symmetric
Radon measure on $\R^d \backslash \left\{0\right\}$, i.e., $\nu(E)= \nu(-E)$, for every Borel set $E \subset
\R^d \backslash \left\{0\right\}$, with the property that $\int_{\R^d} (1 \wedge |z|^2) \nu(dz) < \infty$.
In the present paper we assume throughout without further notice that the measure $\nu(dz)$ has infinite total
mass and it is absolutely continuous with respect to Lebesgue measure, i.e., $\nu(\R^d \backslash \left\{0\right\})
=\infty$ and $\nu(dz)= \nu(z)dz$, with $\nu(z) > 0$, $z\in\R^d$. For simplicity, we denote the density  also by
$\nu$.
It is a standard fact \cite{bib:J} that $-L$ is a positive, self-adjoint operator with core $C_0^{\infty}(\R^d)$,
and the expression
$$
L f(x) = \sum_{i,j=1}^d a_{ij} \frac{\partial^2 f}{\partial x_j \partial x_i} (x) +
\lim_{\varepsilon \downarrow 0} \int_{|y-x|>\varepsilon} (f(z)-f(x))\nu(z-x)dz, \quad x \in \R^d,
\; f \in C_0^{\infty}(\R^d),
$$
holds. Also, $\Spec (-L) = \Spec_{\rm ess} (-L) = [0,\infty)$.
%%%%%%%%%DIRICHLET%%%%%%%%%%%%%
\iffalse
One can associate with $L$ the Dirichlet form $(\cE, \Dom(\cE))$
\begin{align}
\label{def:qform}
\cE(f,g) = \int_{\R^d} \psi(\xi) \widehat{f}(\xi) \overline{\widehat{g}(\xi)} d\xi, \quad f, g \in \Dom(\cE)
= \left\{h \in L^2(\R^d): \int_{\R^d} \psi(\xi) |\widehat{h} (\xi)|^2 d\xi < \infty \right\}.
\end{align}
Whenever $f \in \Dom(L)$, we have $\cE(f,g)=(f,g)$.
\fi
%%%%%%%%%%%%%%%%%%%%%%%%%%%%%%%

Below we will use the symmetrization
\begin{align} \label{eq:Lchexpprof}
\Psi(r) = \sup_{|\xi| \leq r} \psi(\xi), \quad r>0,
\end{align}
of the symbol $\psi$. Let $H(r) = \frac{\left\|A\right\|}{r^2}+ \int_{\R^d\backslash \left\{0\right\}}
\left(1 \wedge \frac{|z|^2}{r^2}\right) \nu(dz)$. A combination of \cite[Rem. 4.8]{bib:Sch} and \cite[Sect. 3]{bib:Pru}
gives that
\begin{align}
\label{eq:PruitH}
C_1 H\left(\frac{1}{r}\right) \leq \Psi(r) \leq C_2 H\left(\frac{1}{r}\right), \;\; r>0,
\end{align}
holds with suitable constants $C_1 \in (0,1), C_2 > 1$, independent of $A$ and $\nu$. Also, it follows directly
that $r \mapsto H(r)$ is non-increasing and the doubling property $H(r) \leq 4 H(2r)$, $r>0$, holds, in particular,
$\Psi(2r) \leq 4 (C_2/C_1) \Psi(r)$, for all $r>0$.

Furthermore, let
$$
\cB = \left\{f \in C^2_{\rm c}(\R^d): f(x)=1 \ \text{for} \ x \in B(0,1/2),
f(x)=0 \ \text{for} \ x \in B(0,1)^c \ \text{and} \ 0 \leq f \leq 1\right\}.
$$
Then for $f_s(x) = f(x/s)$ with $f \in \cB$ and $s>0$, we have
\begin{eqnarray*}
\left\|L f_s \right\|_{\infty}
& \leq &
2 \nu(B(0,s)^c) + \frac{1}{s^2}\sup_{i,j=1,...,d} \left\|\frac{\partial^2 f}{\partial x_i \partial x_j}\right\|_{\infty}
\left(\left\|A\right\| + \int_{|y|  \leq s}|y|^2 \nu(dy)\right) \\
& \leq &
\left(2 \vee \sup_{i,j=1,...,d} \left\|\frac{\partial^2 f}{\partial x_i \partial x_j}\right\|_{\infty} \right) H(s),
\quad s >0. \nonumber
\end{eqnarray*}
Let $f_s(x) = f(x/s)$ for $s>0$, and denote
\begin{eqnarray}\label{def:C9a}
C_3(L, s) := \inf_{f \in \cB} \left\|L f_s \right\|_{\infty}.
\end{eqnarray}
From the above observations it follows that
\begin{align}\label{def:C9}
C_3(L,s) \leq \frac{1}{C_1} \left(2 \vee \inf_{f \in \cB} \sup_{i,j=1,...,d} \left\|\frac{\partial^2 f}{\partial x_i
\partial x_j}\right\|_{\infty}  \right) \Psi(1/s), \quad s >0.
\end{align}

A special feature of the operators of the form (\ref{def:gen}) with (\ref{eq:Lchexp}) is that they can be treated
by a path integral approach. For each choice of the matrix $A$ and the measure $\nu(dz) = \nu(z)dz$, the operator $L$
is the infinitesimal generator of an $\R^d$-valued rotationally symmetric L\'evy process, $d \geq 1$, on the space of
c\`adl\`ag paths (i.e., functions $[0,\infty) \to \R^d$ which are continuous from the right, having left limits). We
denote by $\pro X$ the L\'evy process generated by $L$, the probability measure of the process starting in $x \in \R^d$
by $\pr^x$, and expectation with respect to $\pr^x$ by $\ex^x$. It is a general fact that $\pro X$ is a strong Markov
process with respect to its natural filtration, and its characteristic function is given by
$$
\ex^0 \left[e^{i \xi \cdot X_t}\right] = e^{-t \psi(\xi)}, \quad \xi \in \R^d, \ t >0,
$$
where $\psi$ is the symbol of $-L$ as defined in (\ref{eq:Lchexp}) which, from a probabilistic perspective, is the
L\'evy-Khintchin formula for the class of L\'evy processes we consider. In this context, $A$ is the diffusion
matrix describing the Brownian component of the process $\pro X$, and $\nu(dz)$ is the jump measure (called L\'evy
measure) describing the jump component, thus the L\'evy triplet of the process is $(0,A,\nu)$.
%\quad \nu(x)   > 0.
%\begin{align} \label{eq:nuinf}
%\nu(\R^d \backslash \left\{0\right\})=\infty \quad \text{and} \quad \nu(dx)=\nu(x)dx, \quad \text{with}
%\quad \nu(x)   > 0.
%\end{align}
When $A \equiv 0$, the random process $\pro X$ is a purely jump process, otherwise it contains an independent Brownian
component.

The above properties jointly imply that $\pro X$ is a strong Feller process, or equivalently, its one-dimensional
distributions are absolutely continuous with respect to Lebesgue measure, i.e., there exist measurable functions
$p(t,x,y) = p(t,0,y-x)=: p(t,y-x)$, corresponding to transition probability densities, such that $\pr^0(X_t \in E)
= \int_E p(t,x)dx$, for every Borel set $E \subset \R^d$, see \cite[Th. 27.7]{bib:Sat}. Let $D \subset \R^d$ be an
open bounded set and consider the first exit time
$$
\tau_D = \inf\left\{t \geq 0: X_t \notin D\right\}
$$
from $D$. The transition probability densities $p_D(t,x,y)$ of the process killed on exiting $D$ are then given by
the Dynkin-Hunt formula
\begin{align}
\label{eq:HuntF}
p_D(t,x,y)= p(t,y-x) - \ex^x\left[p(t-\tau_D, y-X_{\tau_D}); \tau_D < t \right], \quad x , y \in D.
\end{align}
The Green function of the process $\pro X$ on $D$ is thus $G_D(x,y)= \int_0^{\infty} p_D(t,x,y) dt$, for all $x, y
\in D$.

We also recall that when $D \subset \R^d$ is a bounded open domain, the following formula due to Ikeda and
Watanabe holds \cite[Th. 1]{bib:IW}: for every $\eta>0$ and every bounded or non-negative Borel function $f$ on
$\R^d$ such that $\dist(\supp f, D) >0$, we have
\begin{align}
\label{eq:IWF}
\ex^x\left[e^{-\eta \tau_D} f(X_{\tau_D})\right] = \int_D \int_0^{\infty} e^{-\eta t} p_D(t,x,y) dt \int_{D^c} f(z)
\nu(z-y) dzdy, \quad x \in D.
\end{align}
Furthermore, by \cite[Rem. 4.8]{bib:Sch} we have
\begin{align}
\label{eq:gen_est_tau}
\ex^0[\tau_{B(0,r)}] \leq  \frac{C_{4}}{\Psi(1/r)}, \quad r    > 0,
\end{align}
with a constant $C_{4}$ independent of the process. For more details on L\'evy processes and their generators
we refer to \cite{bib:Sat,bib:J}.

In the remainder of the paper we will use the following class of L\'evy processes and related operators $L$.
\begin{definition}
\label{jumpdoubling}
Let $\pro X$ be a L\'evy process with L\'evy-Khintchin exponent $\psi$ as in \eqref{eq:Lchexp} and L\'evy triplet
$(0,A,\nu)$, satisfying the following conditions.
\begin{itemize}
\item[\textbf{(A1)}] %\texttt{L\'evy intensity:}
There exist a non-increasing function $g:(0,\infty) \to (0,\infty)$ and constants $C_5$, $C_6 > 0$ such that
$$
\nu(x) \asymp C_5 g(|x|), \quad x \in \R^d \backslash \left\{0\right\}, \quad \text{and} \quad g(|x|) \leq
C_6 g(2|x|),  \quad |x| \geq 1.
$$
\item[\textbf{(A2)}] %\texttt{Transition density:}
There exists $t_{\rm b} >0$ such that $\sup_{x\in \R^d} p(t_{\rm b},x) = p(t_{\rm b},0) < \infty$.
\medskip
\item[\textbf{(A3)}] %\texttt{Green function:}
There exists a constant $C_7=C_7(X)$ such that
$$
\sup_{x,y: \, |x-y| \geq s/8 } G_{B(0,s)}(x,y) \leq C_7 \frac{\Psi(1/s)}{s^d}, \quad s \geq 1.
$$
\end{itemize}

\end{definition}
\noindent
Assumption (A1) is a statement on the profile of $\nu$, including a doubling property. Assumption (A2)
is equivalent with $e^{-t_{\rm b} \psi} \in L^1(\R^d)$, for some $t_{\rm b}  >0$, which
%In this case $p(t_{\rm b},x)$ can be obtained by the Fourier inversion formula. Clearly,
by the Markov property of $\pro X$ extends to every $t >t_{\rm b}$.
%For more details on the existence and properties of transition probability densities for L\'evy processes we
%refer to \cite{bib:KSch} and references therein. Finally, we remark that in many cases of interest
%Assumption (A3) follows directly from (rough) space-time estimates of the densities $p(t,x)$. For unimodal...
Assumption (A3) is a technical condition on the Green function for particular balls. The class of processes
satisfying Assumptions (A1)-(A3) includes isotropic and anisotropic stable processes (corresponding to fractional
Schr\"odinger operators), and layered stable processes (corresponding to another class of L\'evy-operators), which
will be discussed in Section 6.
% large subclasses of isotropic unimodal L\'evy processes,
%subordinate Brownian motions, L\'evy processes with non-degenerate Brownian components, as well as symmetric
%stable-like processes.
%Equivalently, in terms of operators $L$ this covers fractional Laplacians, . . .

Throughout this paper we will use \emph{$X$-Kato class potentials}. We say that the Borel function $V: \R^d\to\R$,
called potential, belongs to $\cK^X$ associated with the L\'evy process $\pro X$ if it satisfies
\begin{align}
\label{eq:Katoclass}
\lim_{t \downarrow 0} \sup_{x \in \R^d} \ex^x \left[\int_0^t |V(X_s)| ds\right] = 0.
\end{align}
Also, we say that $V = V_+-V_-$ is in $X$-Kato class whenever its positive and negative parts satisfy $V_- \in \cK^X$
and $V_+ \in \cK^X_{\loc}$, where $V_+ \in \cK^X_{\loc}$ means that $V_+ 1_B \in \cK^X$ for all compact $B \subset
\R^d$. It is straightforward to see that $L^{\infty}_{\loc}(\Rd) \subset \cK_{\loc}^X$, and by stochastic continuity
of $\pro X$ also $\cK_{\loc}^X \subset L^1_{\loc}(\R^d)$. Note that condition \eqref{eq:Katoclass} allows local
singularities of $V$.

\iffalse
We now give the class of potentials which will be used in this paper.
\begin{definition}[\textbf{$X$-Kato class}]
{\rm
We say that the Borel function $V: \R^d \to \R$ called \emph{potential} belongs to \emph{Kato-class} $\cK^X$ associated
with the L\'evy process $\pro X$ if it satisfies
\begin{align}
\label{eq:Katoclass}
\lim_{t \downarrow 0} \sup_{x \in \R^d} \ex^x \left[\int_0^t |V(X_s)| ds\right] = 0.
\end{align}
Also, we say that $V$ is an \emph{$X$-Kato decomposable potential}, denoted $V \in \cK^X_{\pm}$, whenever
$$
V=V_+-V_-, \quad \text{with} \quad V_- \in \cK^X \quad \text{and} \quad V_+ \in \cK^X_{\loc},
$$
where $V_+$, $V_-$ denote the positive and negative parts of $V$, respectively, and where $V_+ \in \cK^X_{\loc}$
means that $V_+ 1_B \in \cK^X$ for all compact sets $B \subset \R^d$.
\label{Xkato}
}
\end{definition}
\noindent
In what follows, for $X$-Kato decomposable potentials we will use the simpler name $X$-Kato class potentials.
\fi

%For specific processes $\pro X$ the definition of $X$-Kato class can be explicitly  reformulated in an analytic way
%in terms of the kernel $p(t,x)$ restricted to small $t$ and small $x$. It is shown in \cite[Cor. 1.3]{bib:GrzSz} that
%\eqref{eq:Katoclass} is equivalent with
%\begin{align} \label{eq:Kato_new}
%\lim_{t \to 0^{+}} \sup_{x \in \R^d} \int_0^t \int_{B(x,t)} p(s,x-y)|V(y)| dy ds = 0.
%\end{align}
%This condition can be directly checked by using available two-sided sharp estimates of the kernel $p(t,x)$
%in small time (see e.g. \cite{bib:BGR, bib:KS14, bib:KS13}).

With an operator $L$ given by (\ref{def:gen}) and an $X$-Kato class potential $V$, viewed as a multiplication
operator, we call the operator
\begin{equation}
\label{nonlocSch}
H = -L + V \quad \text{in} \quad L^2(\R^d)
\end{equation}
defined by form-sum a \emph{non-local Schr\"odinger operator}. To study the spectral properties of this operator, we
use a Feynman-Kac type representation.
%This representation will also allow us to give a self-adjoint realization of $H$.

\iffalse
\begin{definition}[\textbf{Non-local Schr\"odinger operator}]
Let $L$ be an operator given by (\ref{def:gen}), and $V$ be an $X$-Kato class potential viewed as a multiplication
operator. We call the operator
\begin{equation}
\label{nonlocSch}
H = -L + V
\end{equation}
defined by form sum a \emph{non-local Schr\"odinger operator}.
\end{definition}
\noindent
\fi

\subsection{Feynman-Kac semigroups and generalized eigenfunctions}
\noindent
Consider the one-parameter family of operators
$$
T_t f(x) = \ex^x\left[e^{-\int_0^t V(X_s) ds} f(X_t)\right], \quad \ t>0.
$$
By standard arguments based on Khasminskii's Lemma, see \cite[Lem. 3.37-3.38]{LHB}, for an $X$-Kato class
potential $V$ it follows that there exist constants $C_8(X,V), C_9(X,V) > 0$ such that
\begin{align}
\label{eq:khas}
\sup_{x \in \R^d} \ex^x\left[e^{-\int_0^t V(X_s)ds}\right] \leq \sup_{x \in \R^d}
\ex^x\left[e^{\int_0^t V_-(X_s)ds}\right] \leq C_8 e^{C_9t}, \quad t>0.
\end{align}
This implies that the operators $T_t$, $t > 0$, are well defined on every $L^p(\R^d)$, $1 \leq p \leq \infty$, and
$$
\left\|T_t\right\|_{p,p} \leq \left\|T_t\right\|_{\infty,\infty} \leq  C_8 e^{C_9t}, \quad t>0.
$$
Moreover, the family $\{T_t: t\geq 0\}$ is a strongly continuous semigroup of operators on each
$L^p(\R^d)$, $1 \leq p \leq \infty$, which we call the \emph{Feynman-Kac semigroup} associated with the process
$\pro X$ and potential $V$. We define
$$
H f = -\slim_{t \downarrow 0} \frac{T_t f - f }{t},
$$
for those functions $f \in L^p(\R^d)$ for which the limit exists. We denote the set of all such functions by $\Dom_{L^p} H$
and call it the $L^p$-domain on $H$. It is known that $H$ is a closed unbounded operator such that $\Dom_{L^p} H$ is dense
in $L^p(\R^d)$. For $p=2$ the operator $H$ can be identified with a self-adjoint operator as given by (\ref{nonlocSch}),
defined in a quadratic form sense \cite[Ch. 2]{bib:DC}. A specific class of non-local Schr\"odinger operators given by $H =
\Phi(-\Delta) + V$, where $\Phi$ is a so called Bernstein function, has been defined and studied in \cite{bib:HIL12,bib:HL12}.

Next we summarize some basic properties of the operators $T_t$ which will be useful below. Recall that for a function $f \in
L^2(\R^d)$ we write $f \geq 0$ (resp., $f > 0$) if $f(x) \geq 0$ (resp., $f(x) > 0$) almost everywhere in $\R^d$.
\begin{lemma}
\label{lm:semprop}
Let $\pro X$ be a symmetric L\'evy process with L\'evy-Khintchin exponent satisfying \eqref{eq:Lchexp} such that Assumption
(A2) holds with some $t_{\rm b} > 0$, and let $V$ be an $X$-Kato class potential. Then the following properties hold:
\begin{itemize}
\item[(1)]
For all non-negative Borel measurable functions $f, g$ we have
$$
\int_{R^d} f(x) T_t g(x) dx = \int_{\R^d} T_t f(x) g(x) dx, \quad t>0,
$$
i.e., $T_t$ are symmetric operators.
\item[(2)]
The operators $T_t: L^p(\R^d) \to L^{\infty}(\R^d)$ for $1 < p \leq \infty$, $t \geq t_{\rm b}$,
and
$T_t: L^1(\R^d) \to L^{\infty}(\R^d)$ for $t \geq 2t_{\rm b}$, are bounded.
\vspace{0.1cm}
\item[(3)]
For all $t \geq 2t_{\rm b}$, $T_t$ has a bounded measurable integral kernel $u(t, x, y)$, symmetric in $x$ and $y$,
i.e., $T_t f(x) = \int_{\R^d} u(t,x,y)f(y) dy$, for all $f \in L^p(\R^d)$ and $1 \leq p \leq \infty$.
\vspace{0.1cm}
\item[(4)]
For all $t>0$ and $f \in L^{\infty}(\R^d)$, $T_t f$ is a bounded continuous function, i.e., $\{T_t: t\geq 0\}$
is a strongly Feller semigroup.
\vspace{0.1cm}
\item[(5)]
For all $t > 0$ the operators $T_t$ are positivity improving, i.e., $T_t f > 0$ for all $f \in L^2(\R^d)$ such
that $f \geq 0$ and $f \neq 0$ a.e.
\end{itemize}
\end{lemma}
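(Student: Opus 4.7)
My plan is to handle the five items in the stated order, with (1)--(3) building the structural ingredients (symmetry, smoothing, and kernel) that (4)--(5) then exploit. For (1), I would approximate $e^{-\int_0^t V(X_s)\,ds}$ by Riemann products $\prod_{i=0}^{n-1} e^{-V(X_{s_i})(s_{i+1}-s_i)}$ along partitions $0=s_0<\dots<s_n=t$ and use the Markov property to rewrite $\int f(x)T_t^{(n)}g(x)\,dx$ as an iterated integral in $(x_0,\dots,x_n)$ whose integrand is a product of free densities $p(s_{i+1}-s_i,x_{i+1}-x_i)$ and weights $e^{-V(x_i)(s_{i+1}-s_i)}$. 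Reversing the order of variables and using $p(s,z)=p(s,-z)$ (inherited from $\nu(E)=\nu(-E)$) matches weights and densities to yield $\int fT_t^{(n)}g=\int gT_t^{(n)}f$; the Khasminskii estimate \eqref{eq:khas} provides uniform integrability for the limit $n\to\infty$.

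For (2), Assumption (A2) combined with Fourier inversion forces $p(t_{\rm b},\cdot)\in L^1\cap L^\infty$, hence $p(t,\cdot)\in L^q(\R^d)$ for every $1\leq q\leq\infty$ and $t\geq t_{\rm b}$. Cauchy--Schwarz in the Feynman-Kac functional gives
$$|T_tf(x)|^2\leq \ex^x\big[e^{-2\int_0^tV(X_s)\,ds}\big]\,\ex^x\big[|f(X_t)|^2\big],$$
whose first factor is bounded uniformly in $x$ by \eqref{eq:khas} and whose second, $\int p(t,y-x)|f(y)|^2dy$, is controlled via Hölder by $\|p(t,\cdot)\|_q\||f|^2\|_{q'}$ with suitable conjugate exponents, yielding $T_t:L^p\to L^\infty$ for $p>1$ and $t\geq t_{\rm b}$. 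The case $p=1$ is handled by factoring $T_{2t_{\rm b}}=T_{t_{\rm b}}\circ T_{t_{\rm b}}$ and using the dual (justified by (1)) of $T_{t_{\rm b}}:L^2\to L^\infty$ to obtain $L^1\to L^2$, composed with $L^2\to L^\infty$. Statement (3) is then immediate: $L^1\to L^\infty$ boundedness of $T_t$ for $t\geq 2t_{\rm b}$ is exactly the Dunford--Pettis hypothesis and produces the bounded measurable kernel $u(t,x,y)$, whose symmetry in $x,y$ follows from (1).

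For (4) I would exploit the spatial homogeneity of $\pro X$ to rewrite
$$T_tf(x)=\ex^0\big[e^{-\int_0^tV(x+X_s)\,ds}f(x+X_t)\big]$$
and deduce continuity in $x$ by dominated convergence: the Khasminskii bound dominates the exponential factor uniformly, and almost-sure convergence of $\int_0^tV(x_n+X_s)\,ds$ for $x_n\to x$ is obtained by a standard Kato-class splitting $V=V_1+V_2$ with $V_1$ continuous and $\sup_x\ex^x[\int_0^t|V_2(X_s)|\,ds]$ arbitrarily small. For (5) I would write
$$T_tf(x)=\ex^x\big[e^{-\int_0^tV(X_s)\,ds}f(X_t)\,\1_{\{f(X_t)>0\}}\big],$$
and note that $V\in\cK^X$ makes the exponential strictly positive $\pr^x$-a.s., while $\pr^x(X_t\in\{f>0\})>0$ because $\{f>0\}$ has positive Lebesgue measure and $p(t,\cdot)>0$ almost everywhere (a standard consequence of $\nu(\R^d)=\infty$, absolute continuity of $\nu$, and (A2), cf.\ \cite[Ch.\ 5]{bib:Sat}). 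The step I expect to be the main obstacle is (4): the local Kato singularities of $V$ make continuity of the path integral under translations of the starting point delicate, and the $V_1/V_2$ splitting together with a uniform smallness control on the Kato seminorm of $V_2$ is where most of the care would be needed.
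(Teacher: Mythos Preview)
The paper does not actually prove this lemma: immediately after the statement it says ``The proof of these general properties is left to the reader, which can be obtained as an extension to the present set-up of the facts in \cite[Sects.~3.2--3.3]{LHB}.'' So there is no argument in the paper to compare your sketch against; your proposal is in fact considerably more detailed than what the authors supply, and its overall architecture (Riemann products for symmetry, Khasminskii plus density bounds for smoothing, Dunford--Pettis for the kernel, translation plus Kato-splitting for strong Feller, positivity of densities for positivity improving) is the standard one indicated by that reference.

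Two technical points in your sketch deserve tightening. In (2), starting with Cauchy--Schwarz and then bounding $\int p(t,\cdot)|f|^2$ by H\"older with $\||f|^2\|_{q'}$ only works when $p\geq 2$, since for $1<p<2$ the function $|f|^2$ lies in $L^{p/2}$ with $p/2<1$. The fix is immediate: apply H\"older directly in the Feynman--Kac expectation with exponents $p$ and $p/(p-1)$, bounding $|T_tf(x)|\leq \big(\ex^x[e^{-\frac{p}{p-1}\int_0^tV}]\big)^{(p-1)/p}\big(\ex^x[|f(X_t)|^p]\big)^{1/p}$, and then use $\ex^x[|f(X_t)|^p]\leq \|p(t,\cdot)\|_\infty\|f\|_p^p$.

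In (4), your argument handles the a.s.\ convergence of the exponential factor but is silent about $f(x_n+X_t)$ when $f\in L^\infty$ is not continuous; pointwise convergence of $f(x_n+X_t)\to f(x+X_t)$ simply fails for general representatives of $f$. One clean way around this is to split $T_tf(x_n)-T_tf(x)$ into a term with $(e_n-e)f(x_n+X_t)$ (handled by your argument plus uniform integrability from Khasminskii) and a term $\ex^0[e\cdot(f(x_n+X_t)-f(x+X_t))]$, then apply Cauchy--Schwarz to the latter and use that $\int p(t,z)|f(x_n+z)-f(x+z)|^2\,dz\to 0$ by continuity of translation in $L^2_{\rm loc}$ combined with the integrability of $p(t,\cdot)$. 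This is precisely the kind of care you anticipated being needed in (4), just shifted from the exponential factor to the terminal value.
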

\noindent
The proof of these general properties is left to the reader, which can be obtained as an extension to the
present set-up of the facts in \cite[Sects. 3.2-3.3]{LHB}. Note that we do not assume that $p(t,x)$ is bounded
for all $t>0$, and thus in general the operators $T_t: L^p(\R^d) \to L^{\infty}(\R^d)$ need not be bounded for
$t<t_{\rm b}$.

Related to the Feynman-Kac semigroup, we define the potential operator by
\begin{align*}
G^V f(x) = \int_0^\infty T_t f(x) dt = \ex^x \left[\int_0^\infty e^{-\int_0^t V(X_s)ds} f(X_t) dt \right],
\end{align*}
for non-negative or bounded Borel functions $f$ on $\R^d$. Recall that $\tau_D$ denotes the first exit time of
the process from domain $D$. Whenever $D \subset \R^d$ is an open set, it follows by the strong Markov property
of the process that for every $x \in D$
\begin{equation}
\label{eq:pot1}
\begin{split}
G^V f(x)
& =
\ex^x\left[\int_0^{\tau_D} e^{-\int_0^t V(X_s)ds} \, f(X_t) dt \right] + \ex^x\left[e^{-\int_0^{\tau_D} V(X_s)ds}
\, G^V f(X_{\tau_D}); \tau_D < \infty \right].
\end{split}
\end{equation}
For further information on potential theory we refer to \cite{bib:BKK}.

Let $V$ be a \emph{decaying} $X$-Kato class potential by which we mean $V(x) \to 0$ as $|x|\to\infty$. The main object
of our investigations in this paper are the solutions $\varphi \in L^p(\R^d)$, $p \geq 1$, $\varphi \not \equiv 0$, of
the equation
\begin{equation}
\label{eveq}
H\varphi = 0,
\end{equation}
or, equivalently,
\begin{align}
\label{eq:eign}
T_t \varphi = \varphi, \quad t \geq 0.
\end{align}
In the $L^2$-framework
$$
\Sigma := \inf \Spec_{\rm ess} H = \inf \Spec_{\rm ess} (-L) = 0
$$
is the edge of the essential spectrum of $H$ and \eqref{eveq} can be understood as the eigenvalue equation at $\Sigma$. 
Whenever the solution $\varphi$ to \eqref{eveq} is such that $\varphi \in \Dom_{L^2} H$, we call it a \emph{zero-energy 
eigenfunction} (or \emph{zero-energy bound state}) and then 0 is an eigenvalue. Otherwise, we call both (by a slight 
abuse of language) a \emph{zero-resonance}.

Throughout we will assume that every zero-energy eigenfunction $\varphi$ is $L^2$-normalized so that $\|\varphi\|_{2} = 1$. 
Moreover, by Lemma \ref{lm:semprop} we have $T_t(L^p(\R^d)) \subset L^{\infty}(\R^d)$ and $T_t(L^{\infty}(\R^d)) \subset 
C_{\rm b}(\R^d)$ for every $t>2t_{\rm b}$ and $p \geq 1$. Therefore, any solution $\varphi$ to \eqref{eveq}-\eqref{eq:eign} 
is a bounded and continuous function, in particular, it makes sense to study their pointwise estimates.

Below we will make frequent use the following resolvent representation of solutions of (\ref{eveq}). Choose $\theta >0$. 
Then by multiplying both sides of (\ref{eq:eign}) and integrating with respect to time, we obtain
\begin{align}\label{eq:eig}
\varphi(x) = \theta
\int_0^{\infty}\ex^x\left[ e^{-\int_0^t(\theta+V(X_s))ds} \varphi(X_t) \right]dt, \quad x \in \R^d.
\end{align}
Combining this with \eqref{eq:pot1} applied to $f=\varphi$ for an arbitrary open set $D \subset \R^d$ and $x \in D$, and
using the strong Markov property of the L\'evy process $\pro X$, we readily obtain
\begin{eqnarray}
\label{eq:eig1}
\varphi(x)
&=&
\theta \left(\int_0^{\tau_D} + \int_{\tau_D}^\infty\right)\ex^x\left[ e^{-\int_0^t(\theta+V(X_s))ds} \varphi(X_t) \right]dt
\nonumber\\
&=&
\theta \ex^x\left[ \int_0^{\tau_D} e^{-\int_0^t(\theta+V(X_s))ds} \varphi(X_t)
dt\right] + \ex^x\left[e^{-\int_0^{\tau_D}(\theta+V(X_s))ds} \varphi(X_{\tau_D}) ; \tau_D < \infty \right],
\end{eqnarray}
which will be a fundamental formula in what follows. %This representation holds equally for $\varphi \in L^p(\R^d)$, $p \geq 1$.

\bigskip

\section{Self-improving estimates}
\label{sec:tech}
In this section we show some key estimates %on $H$-harmonic functions
which will serve to proving our main results below.
Let $R_0 \geq 1$ be a fixed number. Throughout this section we will consider non-increasing functions
$u, v, w :[R_0,\infty) \to (0,\infty)$ such that there exist $C_{10}, C_{12} \geq 1$ satisfying
\begin{align} \label{eq:doubl_prop}
u(r) \leq C_{10} u(2r) \qquad w(r) \leq C_{12} w(2r), \quad r \geq R_0,
\end{align}
and
\begin{align} \label{eq:dom}
\int_{|x| > r} u(|x|) \leq  w(r), \quad r \geq R_0.
\end{align}
In what follows we will also use the notations
$$
K_{u,v}:=\frac{u}{v} \qquad \text{and} \qquad h_{u,v}(r)
:= \int_{R_0 \leq |y| \leq r} K_{u,v}(|y|) dy, \quad r \geq R_0,
$$
and $\omega_d$ will denote the volume of a unit ball.

%The following lemma is our first main technical result of this paper and it will be fundamental for our investigations below.

\begin{lemma}
\label{lem:defic1}
Let $u, v, w :[R_0,\infty) \to (0,\infty)$ be non-increasing functions such that \eqref{eq:doubl_prop}-\eqref{eq:dom}
hold with constants $C_{10}, C_{12} \geq 1$ and
\begin{align}
\label{eq:trans}
\lim_{r \to \infty} \frac{w(r)}{v(r)} = 0.
\end{align}
Moreover, suppose that $f$ is a bounded non-negative function on $\R^d$ such that
\begin{align}
\label{eq:1st_iteration}
f(x) \leq \frac{C_{13}}{v(|x|)} \left(\int_{|z-x| > \frac{|x|}{2}}f(z) u(|z-x|) dz + \frac{w(|x|)}{|x|^d}
\int_{\frac{|x|}{32} < |z-x| \leq \frac{|x|}{2}} f(z) dz  \right), \quad |x| \geq R_0,
\end{align}
for a constant $C_{13} >0$, and let $\eta:=C_{10}C_{13}$. If
\begin{align} \label{eq:bound_h}
\sup_{r>R_0} K_{u,v}(r) e^{\eta h_{u,v}(r)} < \infty,
\end{align}
then there exist $R > 2R_0$ and $C_{14}=C_{14}(R)$ such that
\begin{align}
\label{eq:1st_iteration_h}
f(x) \leq C_{14} \left\|f\right\|_{\infty} K_{u,v}(|x|) e^{\eta h_{u,v}(|x|)}, \quad |x| \geq R.
\end{align}
In particular, if
$$
\int_{|y| \geq R_0} K_{u,v}(|y|) dy < \infty,
$$
then
\begin{align} \label{eq:1st_iteration_hh_2}
f(x) \leq C_{14} e^{\eta h_{u,v}(\infty)} \left\|f\right\|_{\infty} K_{u,v}(|x|) ,
\quad |x| \geq R, \quad \text{with} \quad h_{u,v}(\infty):= \lim_{r \to \infty} h_{u,v}(r).
\end{align}
\end{lemma}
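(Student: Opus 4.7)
The plan is to recast \eqref{eq:1st_iteration} as a Gronwall-type integral inequality for the rescaled function $F(x) := f(x)/K_{u,v}(|x|)$. Since the measure $K_{u,v}(|y|)\,dy$ has mass exactly $h_{u,v}(r)$ on $\{R_0 \leq |y| \leq r\}$, the target estimate \eqref{eq:1st_iteration_h} reads $F(x) \leq C_{14}\|f\|_\infty\, e^{\eta h_{u,v}(|x|)}$, which is precisely the shape of the output of a Gronwall lemma with respect to this measure.

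First I would perform an \emph{initialization step} by substituting the trivial bound $f \leq \|f\|_\infty$ into the right-hand side of \eqref{eq:1st_iteration}. Using a change of variables $y = z - x$, \eqref{eq:dom} and the doubling of $w$, the first integral is bounded by $\|f\|_\infty w(|x|/2) \leq C_{12}\|f\|_\infty w(|x|)$, while the annular integral is controlled by $c\|f\|_\infty w(|x|)$ via a straightforward volume count. Together this gives $f(x) \leq c_0\|f\|_\infty w(|x|)/v(|x|)$, which tends to $0$ by \eqref{eq:trans} — a preliminary decay that will make all subsequent tail contributions manageable.

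Next I would split the first integral in \eqref{eq:1st_iteration} according to $|z| \leq 2|x|$ versus $|z| > 2|x|$. On the near region $\{|z| \leq 2|x|,\, |z-x|>|x|/2\}$, the doubling of $u$ yields $u(|z-x|) \leq u(|x|/2) \leq C_{10} u(|x|)$, so, after multiplying by the prefactor $C_{13}/v(|x|)$, one obtains a contribution of the form $\eta K_{u,v}(|x|) \int_{|z|\leq 2|x|} f(z)\,dz$ with the sharp constant $\eta = C_{10}C_{13}$. On the far region $\{|z| > 2|x|\}$, $|z-x|\geq |z|/2$ and doubling give $u(|z-x|)\leq C_{10} u(|z|)$; combined with the initialization bound on $f$ and \eqref{eq:dom}, this term reduces to a bounded additive error of order $\|f\|_\infty K_{u,v}(|x|)$. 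The annular term, where $|z|\asymp |x|$, is handled analogously: the volume $|x|^d$ cancels the prefactor $w(|x|)/|x|^d$, and the smallness of $w/v$ combined with the initialization decay leaves an error of the same magnitude.

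Rewriting $f = F\, K_{u,v}$ on $\{|z|\geq R_0\}$, absorbing the contribution from $\{|z|<R_0\}$ into $O(\|f\|_\infty)$, and dividing through by $K_{u,v}(|x|)$, these estimates produce the Gronwall-type inequality
\[
F(x) \leq c_1 \|f\|_\infty + \eta \int_{R_0 \leq |z| \leq 2|x|} F(z)\, K_{u,v}(|z|)\,dz
\]
for $|x|$ large enough, to which the integral form of Gronwall's lemma with respect to the measure $K_{u,v}(|y|)\,dy$ applies, yielding $F(x) \leq c_1\|f\|_\infty e^{\eta h_{u,v}(2|x|)}$. Translating $h_{u,v}(2|x|)$ on the right back to $h_{u,v}(|x|)$ requires the doubling of $K_{u,v}$ — which follows from the doubling of $u$ together with monotonicity of $v$ — in conjunction with hypothesis \eqref{eq:bound_h}, which is phrased precisely so as to absorb this dyadic overshoot into the multiplicative constant $C_{14}$. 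The addendum \eqref{eq:1st_iteration_hh_2} is then immediate by letting $r\to\infty$ in $h_{u,v}$. The main obstacle in executing this plan is bookkeeping: the coefficient in front of the Gronwall integral must be exactly $\eta = C_{10}C_{13}$ for the claim to hold, so one has to verify that the far-field integral, the annular term, and the boundary contribution from $\{|z|<R_0\}$ all produce only additive $O(\|f\|_\infty)$ errors and do not leak into the exponent.
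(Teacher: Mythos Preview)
Your Gronwall strategy is morally the same as the paper's induction (the paper builds the Taylor polynomials of $e^{\eta h_{u,v}}$ step by step, which is Gronwall unwound), and your treatment of the near region $\{|z|\leq 2|x|,\,|z-x|>|x|/2\}$ is correct and does deliver the sharp constant $\eta=C_{10}C_{13}$. The gap is in the far-region and annular terms: after a single initialization $f\leq c_0\|f\|_\infty\, w/v$, these contributions are only of order $\|f\|_\infty\, w(|x|)/v(|x|)$, \emph{not} of order $\|f\|_\infty K_{u,v}(|x|)$. Dividing by $K_{u,v}(|x|)=u(|x|)/v(|x|)$ leaves a remainder $\asymp \|f\|_\infty\, w(|x|)/u(|x|)$, and under the standing hypotheses there is no bound relating $w$ to $u$ pointwise (only the tail bound \eqref{eq:dom}). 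In the prototypical case $u(r)=r^{-d-\alpha}$, $w(r)\asymp r^{-\alpha}$, this ratio grows like $|x|^d$, so your constant $c_1$ in the Gronwall inequality is not finite and the argument stalls.

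The paper avoids this by \emph{not} trying to make the far/annular terms additive. Instead it observes that together they are bounded by $c_4\sup_{|z|\geq |x|/2\vee R}f(z)$ with $c_4<1$ (this is where the smallness of $w/v$ from \eqref{eq:trans} enters, via the choice of $R$), and then runs an induction in which this sup is fed the bound from the previous step. The sup picks up at most $c_1=\sup K_{u,v}e^{\eta h_{u,v}}$ from the ``good'' part (this is exactly why hypothesis \eqref{eq:bound_h} is needed), so the whole far/annular contribution becomes $c_4^p\to 0$. If you want to keep the Gronwall formulation you would need to carry this contractive sup-term alongside the integral, which in effect reproduces the paper's induction. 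A secondary issue is your reduction from $h_{u,v}(2|x|)$ to $h_{u,v}(|x|)$: the difference $h_{u,v}(2|x|)-h_{u,v}(|x|)$ is not bounded under \eqref{eq:bound_h} alone, so even with the main gap fixed you would do better to cut the near region at $|z|\leq |x|$ rather than $2|x|$, as the paper does.
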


\begin{proof}
Observe that \eqref{eq:1st_iteration_hh_2} follows directly from \eqref{eq:1st_iteration_h}. We only
need to prove \eqref{eq:1st_iteration_h}. Let
\begin{align} \label{eq:crucial_1_h}
c_1 := \sup_{r \geq R_0} K_{u,v}(r) e^{\eta h_{u,v}(r)}
\end{align}
and $R >2R_0$ be large enough such that
\begin{align} \label{eq:crucial_2_h}
c_2 := \sup_{|x| \geq R}  \frac{w(|x|)}{v(|x|)} < \frac{1}{(1+c_1) C_{13}\big(C_{12} + 2^{-d}\big)}.
\end{align}
By \eqref{eq:1st_iteration} and the part of \eqref{eq:doubl_prop} stated for $u$, for every $|x| \geq R$
we have
\begin{eqnarray*}
f(x)
& \leq &
\frac{C_{13}}{v(|x|)} \left(\int_{|z-x| > \frac{|x|}{2} \atop |z| \leq R }f(z) u(|z-x|) dz + \frac{w(|x|) \,
\1_{\left\{|x| \leq 2R\right\}}}{|x|^d} \int_{\frac{|x|}{32} < |z-x| \leq \frac{|x|}{2}} f(z) dz  \right)
\nonumber \\
& \qquad + &
\frac{C_{13}}{v(|x|)} \left(\int_{|z-x| > \frac{|x|}{2} \atop |z| > R }f(z) u(|z-x|) dz + \frac{w(|x|) \,
\1_{\left\{|x| > 2R\right\}}}{|x|^d} \int_{\frac{|x|}{32} < |z-x| \leq \frac{|x|}{2}} f(z) dz  \right) \nonumber \\
& \leq &
\frac{C_{13} \, \|f\|_\infty}{v(|x|)} \left(\omega_d R^d u(|x|/2) + \frac{\omega_d \, w(2R)}{2^d u(2R)}u(|x|)\right)
\nonumber \\
& \qquad + &
\frac{C_{13}}{v(|x|)} \left(\int_{|z-x| > \frac{|x|}{2} \atop |z| > R }f(z) u(|z-x|) dz + \frac{w(|x|) \,
\1_{\left\{|x| > 2R\right\}}}{|x|^d} \int_{\frac{|x|}{32} < |z-x| \leq \frac{|x|}{2}} f(z) dz  \right) \nonumber \\
& \leq &
C_{13} \,\omega_d \left( C_{10} R^d+ \frac{w(2R)}{2^d u(2R)}\right) \|f\|_\infty \, \frac{u(|x|)}{v(|x|)} \nonumber \\
& \qquad + &
\frac{C_{13}}{v(|x|)} \left(\int_{|z-x| > \frac{|x|}{2} \atop |z| > R }f(z) u(|z-x|) dz +
\frac{w(|x|)}{|x|^d} \int_{\frac{|x|}{32} < |z-x| \leq \frac{|x|}{2} \atop |z| > R} f(z) dz  \right).
\end{eqnarray*}
From this we obtain the two independent estimates
\begin{align*}
f(x)
& \leq
C_{13} \,\omega_d \left( C_{10} R^d+ \frac{w(2R)}{2^d u(2R)}\right) \|f\|_\infty \, \frac{u(|x|)}{v(|x|)}  \\
& \quad +
\frac{C_{13}}{v(|x|)} \int_{R \leq |z| \leq |x| \atop |x-z| > \frac{|x|}{2}} f(z) u(|x-z|) dz +
C_{13}\frac{w(|x|)}{v(|x|)} \left(\frac{\int_{|z| > \frac{|x|}{2}}u(|z|) dz}{w(|x|)} + \frac{1}{2^d} \right)
\sup_{|z| \geq |x|/2 \vee R} f(z)
\end{align*}
and
\begin{align*}
f(x)
\leq
C_{13} \,\omega_d \left( C_{10} R^d+ \frac{w(2R)}{2^d u(2R)}\right) \|f\|_\infty \, \frac{u(|x|)}{v(|x|)} + C_{13}
\|f\|_\infty \frac{w(|x|)}{v(|x|)} \left(\frac{\int_{|z| > \frac{|x|}{2}}u(|z|) dz}{w(|x|)} + \frac{1}{2^d}\right).
\end{align*}
They give, respectively,
\begin{align} \label{eq:proc1_h}
f(x) \leq c_3 \left\|f\right\|_{\infty} \frac{u(|x|)}{v(|x|)}
+ \frac{C_{13}}{v(|x|)} \int_{R \leq |z| \leq
|x| \atop |x-z| > \frac{|x|}{2}} f(z) u(|x-z|) dz +
c_2 \, C_{13} \left(C_{12} + \frac{1}{2^d}\right) \sup_{|z|\geq |x|/2 \vee R} f(z)
\end{align}
and
\begin{align} \label{eq:proc2_h}
f(x) & \leq  c_3\|f\|_{\infty} \, \left( \frac{u(|x|)}{v(|x|)} +
c_2  C_{13}\big(C_{12} + 2^{-d}\big)\right),
\end{align}
for $|x| \geq R$, where
$$
c_3 := C_{13} \,\omega_d \left(C_{10} R^d+ \frac{w(2R)}{2^d u(2R)}\right) \vee 1.
$$
Also, recall that $\eta = C_{10} C_{13}$, denote $c_4 = c_2 (1+c_1) C_{13}\big(C_{12} + 2^{-d}\big)$, and notice
that by \eqref{eq:crucial_1_h}-\eqref{eq:crucial_2_h} we have $c_4 < 1$.

Now we show that for every $p \in \N$
\begin{align} \label{eq:eq_ind_h}
f(x) \leq
c_3 \left\|f\right\|_{\infty} \left[K_{u,v}(|x|) \sum_{k=1}^p \frac{\big(\eta h_{u,v}(|x|) \big)^{k-1}}{(k-1)!} +
c_4^p \right], \quad |x| \geq R.
\end{align}
Notice that if this holds, then by taking the limit $p \to \infty$ it follows that
$$
f(x) \leq c_3 \left\|f\right\|_{\infty} K_{u,v}(|x|) e^{\eta h_{u,v}(|x|)}, \quad |x| \geq R,
$$
which is the bound stated in the lemma.
To prove \eqref{eq:eq_ind_h} we make induction on $p \in \N$. First observe that for $p=1$ the estimate
\eqref{eq:eq_ind_h} follows from \eqref{eq:proc2_h}. Suppose now that \eqref{eq:eq_ind_h} holds for $p-1 \in \N$.
By using \eqref{eq:proc1_h} and the induction hypothesis, we see for all $|x| \geq R$ that
\begin{align*}
f(x) & \leq c_3 \left\|f\right\|_{\infty} K_{u,v}(|x|) \\
& \ \
+ c_3 C_{10}C_{13}\left\|f\right\|_{\infty}  K_{u,v}(|x|) \sum_{k=1}^{p-1} \frac{\eta^{k-1}}{(k-1)!}
\int_{1 \leq |z| \leq |x|} K_{u,v}(|z|)h_{u,v}(|z|)^{k-1} dz \\
&  \ \
+ c_3 \left\|f\right\|_{\infty}\left(c_2 C_{12} C_{13} + c_1 c_2 C_{13} (C_{12}+ 2^{-d}) +
c_2 C_{13} (C_{12}+ 2^{-d})\right) c_4^{p-1}.
\end{align*}
By the substitution $h_{u,v}(r) = u$ we obtain
\begin{align*}
\int_{R_0 \leq |z| \leq |x|} K_{u,v}(|z|)h_{u,v}(|z|)^{k-1} dz
& = \int_{R_0}^{|x|} K_{u,v}(r) r^{d-1} h_{u,v}(r)^{k-1} dr  \\
& = \int_{h_{u,v}(R_0)}^{h_{u,v}(|x|)} u^{k-1} du = \frac{h_{u,v}(|x|)^k - h_{u,v}(R_0)^k}{k},
\end{align*}
and thus
\begin{align*}
f(x)
& \leq c_3 \left\|f\right\|_{\infty} K_{u,v}(|x|) + c_3 \left\|f\right\|_{\infty}  K_{u,v}(|x|)
\sum_{k=1}^{p-1} \frac{\eta^{k}}{k!} h_{u,v}(|x|)^k + c_3 \left\|f\right\|_{\infty} c_4^p \\
& =
c_3 \left\|f\right\|_{\infty} \left[K_{u,v}(|x|)
\sum_{k=1}^p \frac{\big(\eta h_{u,v}(|x|) \big)^{k-1}}{(k-1)!} + c_4^p \right] , \qquad |x| \geq R,
\end{align*}
which is the claimed bound.
\end{proof}

It is direct to check that under the non-restrictive assumption that the function $K_{u,v}$ is almost
non-increasing, i.e., there exists $C \geq 1$ such that $K_{u,v}(s) \leq C K_{u,v}(r)$ for all $R_0 \leq r \leq s$,
the integrability condition
$$
\int_{|z| \geq R_0} K_{u,v}(|z|)dz < \infty
$$
is equivalent with the convolution condition
$$
\sup_{|x| \geq R_0} \frac{\int_{|z-x| > R_0, |z| >R_0} K_{u,v}(|x-z|)K_{u,v}(|z|)dz}{K_{u,v}(|x|)} < \infty.
$$

The next lemma deals with a lower bound on positive functions satisfying an integral inequality.
\begin{lemma}
\label{lem:lower_tech}
Let $u, v:[1,\infty) \to (0,\infty)$ be non-increasing functions such that $u$ satisfies \eqref{eq:doubl_prop}
with a constant $C_{10} \geq 1$, and suppose that $f$ is a positive function on $\R^d$ such that
\begin{align}
\label{eq:1st_iteration_low}
f(x) \geq \frac{C_{15}}{v(|x|)} \int_{|z| < |x| \atop |z+x| < |z-x|}f(z) u(|z-x|) dz, \quad |x| > 1,
\end{align}
for a constant $C_{15} >0$. Then
\begin{align}
\label{eq:1st_iteration_h_low}
f(x) \geq \eta e^{-\eta h_{u,v}(1)}K_{u,v}(|x|) e^{\eta h_{u,v}(|x|)}, \quad |x| > 1,
\end{align}
with constant
$$
\eta:=
\frac{C_{15}}{C_{10}} \,\left(\frac{1}{2} \wedge \inf_{x: |x|=1} \int_{|z|\leq 1 \atop |z+x| < |z-x|}f(z) dz\right).
$$
In particular,
\begin{align}
\label{eq:1st_iteration_h_2}
f(x) \geq \eta K_{u,v}(|x|) , \quad |x| > 1.
\end{align}
\end{lemma}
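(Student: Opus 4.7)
The plan is to iterate the hypothesis \eqref{eq:1st_iteration_low} in a manner parallel to Lemma \ref{lem:defic1}, but in the direction of lower bounds. The starting observation is geometric: the condition $|z+x|<|z-x|$ is equivalent to $z\cdot x<0$, so the integration region is the half-ball $\{z : |z|<|x|,\, z\cdot x<0\}$. For any $z$ in this set one has $|z-x|\leq |z|+|x|<2|x|$, so the doubling property of $u$ gives $u(|z-x|)\geq u(2|x|)\geq u(|x|)/C_{10}$. Inserting this estimate into \eqref{eq:1st_iteration_low} yields the workable inequality
\begin{equation*}
f(x) \,\geq\, \frac{C_{15}}{C_{10}}\, K_{u,v}(|x|) \int_{|z|<|x|,\, z\cdot x<0} f(z)\,dz, \qquad |x|>1.
\end{equation*}

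I would then set up an induction on $p\in\N$, aiming to show $f(x) \geq \eta K_{u,v}(|x|)\,S_p(|x|)$, where $S_p(r) := \sum_{k=0}^{p}(\eta h_{u,v}(r))^k/k!$, from which \eqref{eq:1st_iteration_h_low} follows by letting $p\to\infty$; here the prefactor $e^{-\eta h_{u,v}(1)}$ in the statement is harmless since $h_{u,v}(1)=0$ under the convention $R_0=1$ adopted in this lemma, and \eqref{eq:1st_iteration_h_2} is then immediate by dropping the exponential. The base case $p=0$ follows by restricting the integral above to $|z|\leq 1$: since the set $\{z\cdot x<0\}$ depends only on the direction of $x$, one has $\int_{|z|\leq 1,\, z\cdot x<0} f(z)\,dz \geq c_0 := \inf_{|y|=1}\int_{|z|\leq 1,\, z\cdot y<0} f(z)\,dz$, and the choice $\eta\leq (C_{15}/C_{10})\,c_0$ built into the definition of $\eta$ delivers $f(x)\geq \eta K_{u,v}(|x|)=\eta K_{u,v}(|x|) S_0(|x|)$.

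For the inductive step I would split the half-ball integral into the pieces $|z|\leq 1$ (which still contributes at least $c_0$ by the base argument) and $1<|z|<|x|$ (into which one substitutes the inductive hypothesis). Since the integrand $K_{u,v}(|z|)S_p(|z|)$ is radial, the reflection $z\mapsto -z$ converts the half-space integral into one half of the full shell integral, and the substitution $t=h_{u,v}(r)$, exactly as in the proof of Lemma \ref{lem:defic1}, gives
\begin{equation*}
\int_{1<|z|<|x|} K_{u,v}(|z|)\,S_p(|z|)\,dz \,=\, \sum_{k=0}^{p}\frac{\eta^{k}\,h_{u,v}(|x|)^{k+1}}{(k+1)!} \,=\, \frac{1}{\eta}\sum_{j=1}^{p+1}\frac{(\eta h_{u,v}(|x|))^{j}}{j!}.
\end{equation*}
Assembling both pieces and using both halves of the definition $\eta=(C_{15}/C_{10})(\tfrac{1}{2}\wedge c_0)$ — namely $(C_{15}/C_{10})c_0\geq \eta$ for the constant term and $(C_{15}/C_{10})\cdot\tfrac{1}{2}\geq \eta$ to absorb the factor $\tfrac{1}{2}$ coming from reflection symmetry — the two contributions add up to $\eta K_{u,v}(|x|)\bigl(1+\sum_{j=1}^{p+1}(\eta h_{u,v}(|x|))^{j}/j!\bigr)=\eta K_{u,v}(|x|)\,S_{p+1}(|x|)$, closing the induction.

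The crux of the argument, and the only potentially subtle point, is the role of the $\tfrac{1}{2}$ in the definition of $\eta$: it is engineered precisely to absorb the loss inherent in integrating over a half-space at every iteration; without it the induction would shed a factor of $2$ at each step and the exponential series would fail to close. Everything else is a routine combination of doubling, radial symmetry, and the substitution already employed in the proof of Lemma \ref{lem:defic1}.
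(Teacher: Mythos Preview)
Your proof is correct and follows essentially the same route as the paper's: the same geometric reduction via $|z+x|<|z-x|\Leftrightarrow z\cdot x<0$, the same doubling estimate $u(|z-x|)\geq u(|x|)/C_{10}$, the same induction on the partial exponential sums, and the same substitution $t=h_{u,v}(r)$ to evaluate the shell integral. The paper writes the symmetrization explicitly as the identity $\int_{z\cdot x<0}f(z)\,dz=\tfrac12\bigl(\int_{z\cdot x<0}f(z)\,dz+\int_{z\cdot x>0}f(-z)\,dz\bigr)$ and carries $h_{u,v}(|x|)-h_{u,v}(1)$ through the induction rather than invoking $h_{u,v}(1)=0$ at the outset, but these are purely cosmetic differences; your identification of the role of the $\tfrac12$ in the definition of $\eta$ is exactly right.
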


\begin{proof}
By \eqref{eq:1st_iteration_low} we have
\begin{align*}
f(x) \geq \frac{C_{15}}{C_{10}} \frac{u(|x|)}{v(|x|)} \left(\int_{|z| \leq 1 \atop |z+x| < |z-x|} f(z) dz +
\int_{1 < |z| < |x| \atop |z+x| < |z-x|} f(z) dz\right),  \quad |x| > 1,
\end{align*}
and by symmetrization of the second integral,
\begin{align} \label{eq:low1}
f(x) \geq \frac{C_{15}}{C_{10}} K_{u,v}(|x|)  \left(\int_{|z| \leq 1 \atop |z+x| < |z-x|} f(z) dz +
\frac{1}{2}\left(\int_{1 < |z| < |x| \atop |z+x| < |z-x|} f(z) dz +
\int_{1 < |z| < |x| \atop |z-x| < |z+x|} f(-z) dz\right)\right),
\end{align}
for all $|x| > 1$. Next we prove that for every $p \in \N$
\begin{align} \label{eq:low2}
f(x) \geq \eta K_{u,v}(|x|) \sum_{k=1}^{p} \frac{\big(\eta (h_{u,v}(|x|)-h_{u,v}(1))\big)^{k-1}}{(k-1)!},
\quad |x| > 1,
\end{align}
holds with
$$
\eta:=\frac{C_{15}}{C_{10}} \,\left(\frac{1}{2} \wedge \inf_{|x| = 1} \int_{|z| \leq 1 \atop |z+x| < |z-x|}f(z) dz\right).
$$
Clearly, if \eqref{eq:low2} is true for every $p \in \N$, then estimate \eqref{eq:1st_iteration_h_low} also holds.

We use induction on $p$. For $p=1$ the inequality \eqref{eq:low2} is an immediate consequence of \eqref{eq:low1}. Suppose
now that the induction hypothesis holds for some $p \in \N$. By \eqref{eq:low1}-\eqref{eq:low2} and rotation symmetry
we have
\begin{align*}
f(x) \geq \eta K_{u,v}(|x|) \left(1 + \eta \sum_{k=1}^{p} \frac{\eta^{k-1}}{(k-1)!} \int_{1 < |z| <|x|}
\frac{u(|z|)}{v(|z|)}(h_{u,v}(|z|)-h_{u,v}(1))^{k-1} dz\right),  \quad |x| > 1.
\end{align*}
Since
\begin{align*}
\int_{1 < |z| <|x|} \frac{u(|z|)}{v(|z|)}(h_{u,v}(|z|)-h_{u,v}(1))^{k-1} dz
& =
\int_1^{|x|} \frac{u(r)}{v(r)}(h_{u,v}(r)-h_{u,v}(1))^{k-1} r^{d-1} \\
& = \frac{(h_{u,v}(|x|)-h_{u,v}(1))^{k}}{k}, \quad |x|> 1,
\end{align*}
we conclude that
\begin{align*}
f(x) & \geq \eta K_{u,v}(|x|) \left(1 + \sum_{k=1}^{p} \frac{\eta^{k}}{k!} (h_{u,v}(|x|)-h_{u,v}(1))^{k}\right) \\
& = \eta K_{u,v}(|x|) \sum_{k=1}^{p+1} \frac{\eta^{k-1}}{(k-1)!} (h_{u,v}(|x|)-h_{u,v}(1))^{k-1},  \quad |x| > 1,
\end{align*}
as required.
\end{proof}

\section{Decay of zero-energy eigenfunctions for potentials positive at infinity}

\subsection{Upper bound}
\noindent
Now we turn to discussing the spatial decay properties of eigenfunctions of non-local Schr\"odinger operators
presented in Section 2. %Except for the last section (****MIXED POTENTIALS****),
In this section we consider \emph{decaying potentials that are non-negative at infinity} in the following sense:

\medskip

\begin{itemize}
\item[\textbf{(A4)}]
$V$ is
\iffalse
Let $V \in \cK_{\pm}^X$ be
\fi
an $X$-Kato class potential such that $V(x) \to 0$ as $|x| \to \infty$, and there exists $r_0 >0$ such that
$V(x) \geq 0$ for $|x| \geq r_0$.
\end{itemize}

\medskip

It will be useful to introduce the notation
$$
V_{*}(x):= \inf_{r_0 \leq |y| \leq \frac{3}{2}|x|} V(y), \quad |x| \geq r_0.
$$
Notice that $V_{*}(x)$ is a radial and non-increasing function such that $V_{*}(x) \geq 0$, $|x| \geq r_0$.

We will need a uniform estimate of functions that are harmonic with respect to the operator $H$. Since our approach
is via a Feynman-Kac type stochastic representation, we use throughout the following probabilistic definition.
Let $D$ be an open subset of $\R^d$ and let $V$ be a Kato-class potential such that $V(x) \geq 0$ on $D$. We call a
non-negative Borel function $f$ on $\R^d$ an \emph{$(X,V)$-harmonic function} in the domain $D$ if
\begin{align}
\label{def:harm}
f(x) & = \ex^x\left[e^{-\int_0^{\tau_U} V(X_s)ds }  f(X_{\tau_U}) ; \tau_U < \infty \right], \quad x \in U,
\end{align}
for every open set $U$ with its closure $\overline{U}$ contained in $D$, and a \emph{regular $(X,V)$-harmonic function}
in $D$ if \eqref{def:harm} holds for $U=D$ (where $\tau_U$ is the first exit time from $U$). By the strong Markov
property every regular $(X,V)$-harmonic function in $D$ is $(X,V)$-harmonic in $D$. Whenever $V \equiv 0$ in $D$,
we refer to $f$ as a \emph{(regular) $X$-harmonic function}.

An initial version of the type of bound we prove below has been first obtained in \cite[Lem. 3.1]{KL17} and it can be
derived from the general results in \cite{bib:BKK}. Here we need a variant suitable for the purposes of the present
paper. Note that the following estimate does not exclude the case $V \equiv 0$.

\begin{lemma}
\label{lm:bhi}
Let $(X_t)_{t \geq 0}$ be a  L\'evy process with L\'evy-Khintchin exponent $\psi$ as in \eqref{eq:Lchexp} such
that Assumptions (A1)-(A3) and (A4) hold; specifically, let (A4) hold with some $r_0 > 0$. Then for every $\eta \in
(0,\frac{1}{4}]$ there exists a constant $C_{16} >0$ such that for every non-negative function $f$ on $\R^d$ which
is regular $(X,V)$-harmonic in a ball $B\big(x,\eta|x|\big)$, $|x| \geq r_0/(1-\eta)$, we have
\begin{align}
\label{eq:harmest}
f(y)\leq \frac{C_{16}}{V_{*}(x) \vee \Psi\big(\frac{1}{|x|}\big)} \left(\int \limits_{|z-x| >
2\eta|x|}f(z) \nu(z-x)dz +
\frac{\Psi\big(\frac{1}{|x|}\big)}{|x|^d}\int \limits_{\frac{\eta|x|}{8} < |x-z| \leq 2\eta|x|} f(z)dz \right),
\end{align}
whenever $|x-y| < \frac{\eta|x|}{32}$.
\end{lemma}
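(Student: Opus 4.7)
My approach combines the Feynman-Kac representation of regular $(X,V)$-harmonic functions with the Ikeda-Watanabe formula \eqref{eq:IWF}. Setting $B:=B(x,\eta|x|)$, regular $(X,V)$-harmonicity gives
\begin{equation*}
f(y)=\ex^y\!\left[ e^{-\int_0^{\tau_B} V(X_s)\,ds}\, f(X_{\tau_B});\,\tau_B<\infty \right].
\end{equation*}
For $s<\tau_B$, $X_s\in B$ satisfies $|X_s|\in[(1-\eta)|x|,(1+\eta)|x|]\subset[r_0,\tfrac{3}{2}|x|]$ by $|x|\geq r_0/(1-\eta)$ and $\eta\leq\tfrac{1}{4}$, so (A4) forces $V(X_s)\geq V_*(x)$. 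Replacing the exponential by $e^{-V_*(x)\tau_B}$ and invoking \eqref{eq:IWF} with constant killing rate $V_*(x)$ yields
\begin{equation*}
f(y)\leq \int_B G_B^{V_*(x)}(y,z) \int_{B^c} f(w)\nu(w-z)\,dw\,dz,
\end{equation*}
where $G_B^{V_*(x)}(y,z):=\int_0^{\infty} e^{-V_*(x) t}\, p_B(t,y,z)\,dt$.

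I then split $B^c$ into the far part $\{|w-x|>2\eta|x|\}$ and the near shell $\{\eta|x|\leq|w-x|\leq 2\eta|x|\}$. On the far part, $|z-x|<\eta|x|<|w-x|/2$ gives $|w-z|\in[|w-x|/2,3|w-x|/2]$, so the monotonicity and doubling of $g$ in (A1) imply $\nu(w-z)\leq C\nu(w-x)$ uniformly in $z\in B$. Combined with the uniform Green-integral bound
\begin{equation*}
\int_B G_B^{V_*(x)}(y,z)\,dz\leq \ex^y[\tau_B]\wedge \frac{1}{V_*(x)}\leq \frac{C}{V_*(x)\vee \Psi(1/|x|)},
\end{equation*}
which follows from the elementary inequality $\int_0^\tau e^{-Vs}ds\leq \tau\wedge V^{-1}$, from \eqref{eq:gen_est_tau} applied to a slight enlargement of $B$ centered at $y$, and from the doubling of $\Psi$ (discussed after \eqref{eq:PruitH}), this produces the first term of the claimed bound.

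For the near shell I split the $z$-integration into $\{|z-y|\geq \eta|x|/8\}$ and its complement. On the first piece, translating (A3) from $B(0,\eta|x|)$ to $B(x,\eta|x|)$ together with the doubling of $\Psi$ yields $G_B^{V_*(x)}(y,z)\leq G_B(y,z)\leq C\Psi(1/|x|)/|x|^d$ uniformly; since $|z-y|\geq\eta|x|/8$ together with $|y-x|<\eta|x|/32$ forces $|z-x|$ into an annulus contained in $(\eta|x|/8,2\eta|x|]$, Fubini reorganizes this contribution into a bare Lebesgue integral of $f$ over the annulus appearing in the statement. On the complementary piece $|z-y|<\eta|x|/8$, for any $w$ in the near shell one has $|w-z|\geq |w-y|-|z-y|>\eta|x|/2$, so the doubling of $g$ together with the elementary lower bound $H(|x|)\geq c|x|^d g(|x|)$ (obtained by restricting the defining integral of $H$ to $\{|u|>|x|\}$ and using the doubling of $g$) gives $\nu(w-z)\leq C\Psi(1/|x|)/|x|^d$; the Green-integral bound of the previous paragraph then absorbs the $V_*(x)\vee \Psi(1/|x|)$ factor and produces again a contribution of the claimed form (with the near shell being contained in the annulus of the statement).

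The delicate step is the near-shell analysis: one has to simultaneously balance the potentially singular behavior of $\nu(w-z)$ as $z\to\partial B$ against the boundary decay of $G_B(y,z)$, so that the two effects are controlled by the same scale $\Psi(1/|x|)/|x|^d$ through the interplay of (A3) with the comparability $g(r)\leq C\Psi(1/r)/r^d$ implied by (A1) and \eqref{eq:PruitH}.
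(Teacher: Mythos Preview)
Your approach has a genuine gap in the near-shell analysis. After you bound $G_B(y,z)$ uniformly via (A3) on the region $|z-y|\geq\eta|x|/8$ and swap integrals, you are left with $\int_{\text{near shell}} f(w)\big[\int_{z\in B,\,|z-y|\geq\eta|x|/8}\nu(w-z)\,dz\big]\,dw$. For $w$ with $|w-x|$ close to $\eta|x|$ (i.e.\ $w$ near $\partial B$), the inner integral picks up the singularity of $\nu$ at the origin and diverges, since the L\'evy density is not integrable near zero (only $(1\wedge|z|^2)\nu(z)$ is). Your final paragraph correctly flags this as the delicate point, but you do not actually carry out the balancing you describe: (A3) gives only a pointwise upper bound away from the diagonal, not boundary decay of $G_B(y,z)$ as $z\to\partial B$, so the ``interplay'' you invoke is not available from the stated assumptions. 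A second, smaller issue: since the setting allows a nontrivial Gaussian part $A$, the process may exit $B$ continuously, and the Ikeda--Watanabe formula \eqref{eq:IWF} (which by hypothesis applies only to test functions supported away from $D$) does not capture that contribution.

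The paper circumvents both problems by not applying Ikeda--Watanabe directly to $B=B(x,\eta|x|)$. Instead it invokes the machinery of \cite{KL17} and \cite{bib:BKK}: one works with the exit from a strictly smaller inner ball $B(x,s_1/16)$, and the passage from this inner ball to the annulus and to the far region is controlled through the auxiliary quantities $K_2$, $K_3$, $C_{17}$, which encode uniform boundary-Harnack type information. These are packaged into the functions $h_1(s_1,s_2)$ and $h_2(s_1)$, which the paper then shows satisfy $h_1\leq c$ and $h_2(s_1)\leq c\,\Psi(1/s_1)/s_1^d$ under (A1)--(A3). The prefactor is not the full $\tau_B$ but $\ex^y\big[\int_0^{\tau_{B(x,s_1/16)}}e^{-V_*(x)t}\,dt\big]\wedge\ex^y[\tau_{B(x,s_1/16)}]$; this yields $(V_*(x)\vee\Psi(1/|x|))^{-1}$ as in your argument, but the near-shell singularity never appears because the inner-ball exit keeps a fixed distance from $\partial B$. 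Your Green-integral bound and the far-part estimate are fine and match the paper's in spirit; it is the single-ball treatment of the shell that does not go through.
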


\begin{proof}
Following \cite[Sect. 3.2]{KL17}, for $s_1\geq 1$ and $s_2 = 2s_1$ define
\begin{align*}
h_1(s_1,s_2) = K_2(s_1, s_2,\infty) \left[C_3\left(L,\frac{s_1}{16}\right) \Big(C_{17}(s_1)
\, |B(0,s_1)|  + \ex^0 [\tau_{B(0,2s_1)}]\Big) + 1 \right]
\end{align*}
and
\begin{align*}
h_2(s_1) = C_3\left(\frac{s_1}{16}\right) \Bigg[C_3\left(L,s_1\right) C_{17}(s_1) +
\ex^0 [\tau_{B(0,2s_1)}] \sup_{|y| \geq \frac{s_1}{4}} \nu(y)\Bigg] + \sup_{|y| \geq \frac{s_1}{16}} \nu(y),
\end{align*}
where
$$
C_{17}(s_1) := K_3(s_1) + \frac{\ex^0 [\tau_{B(0,2s_1)}]}{\left|B(0,\frac{s_1}{4})\right|}
K_2\left(\frac{s_1}{4}, \frac{s_1}{2}, s_1\right)^2.
$$
By (A1) there exists an absolute constant $c_1>0$ such that
$$
K_2(s_1, s_2,\infty) \leq c_1 \quad \text{and} \quad K_2\left(\frac{s_1}{4}, \frac{s_1}{2}, s_1\right) \leq c_1,
$$
and, from \eqref{eq:PruitH} and (A1) we get
$$
\sup_{|y| \geq \frac{s_1}{4}} \nu(y) \leq \sup_{|y| \geq \frac{s_1}{16}} \nu(y \leq c_2 \frac{\Psi(1/s_1)}{s_1^d}.
$$
This, together with \eqref{def:C9}, \eqref{eq:gen_est_tau}, (A3), and the doubling property of the function $\Psi$
yields
\begin{align}\label{eq:h_functions}
h_1(s_1,s_2) \leq c_3 \quad \text{and} \quad h_2(s_1) \leq c_4 \frac{\Psi(1/s_1)}{s_1^d},
\end{align}
with constants $c_3, c_4 >0$. Let $s_1 \geq 1$ arbitrary, and consider a non-negative function $f$ on $\R^d$ which
is regular $(X,V)$-harmonic in a ball $B\big(x,s_1\big)$, $|x| \geq r_0+s_1$. Then, by applying the argument in the
second part of the proof of \cite[Lem. 3.1]{KL17} with the deterministic multiplicative functional $e^{-\eta t}$
replaced by $e^{-\int_0^t V(X_s)ds}$, we get
\begin{align*}
f(y) & \leq \left(\ex^y\left[\int_0^{\tau_{B(x,\frac{s_1}{16})}} e^{-V_{*}(x) t}dt \right] \wedge
\ex^y[\tau_{B(x,\frac{s_1}{16})}]\right) \\
& \ \ \ \ \ \times  \left(h_1(s_1,s_2) \int \limits_{|z-x| >
s_2}f(z) \nu(z-x)dz + h_2(s_1) \int \limits_{\frac{s_1}{8} < |x-z| \leq s_2} f(z)dz \right),
\end{align*}
for all $|y-x| < s_1/32$ and $|x| \geq r_0+s_1$. Finally, by taking $s_1:=\eta|x|$ (then $s_2=2\eta|x|$) with
$|x| \geq r_0/(1-\eta)$ (so that $|x| \geq r_0+\eta|x|$), using \eqref{eq:h_functions} and the estimates
$$
\ex^y\left[\int_0^{\tau_{B(x,\eta|x|/16)}} e^{-V_{*}(x) t}dt \right] \leq \frac{c_5}{V_{*}(x)} \quad \mbox{and}
\quad \ex^y[\tau_{B(x,\eta|x|/16)}] \leq \frac{c_6}{\Psi(1/|x|)}
$$
(recall that $V_{*}(x) \geq 0$ and here we use the convention $1/0 = +\infty$), we obtain the claimed bound
$$
f(y)\leq \frac{c_7}{V_{*}(x) \vee \Psi\big(\frac{1}{|x|}\big)} \left(\int \limits_{|z-x| >
2\eta|x|}f(z) \nu(z-x)dz +
\frac{\Psi\big(\frac{1}{|x|}\big)}{|x|^d}\int \limits_{\frac{\eta|x|}{8} < |x-z| \leq 2\eta|x|} f(z)dz \right),
$$
for $|x-y|< (\eta|x|)/32$ and $|x| \geq r_0/(1-\eta)$. This completes the proof.
\end{proof}
We single out two choices of $(X,V)$-harmonic functions of special interest below, for which the above estimate
directly applies.
\begin{corollary}
\label{cor:harm_est}
Let $(X_t)_{t \geq 0}$ be a  L\'evy process with L\'evy-Khintchin exponent $\psi$ as in \eqref{eq:Lchexp} such that
Assumptions (A1)-(A3) and (A4) hold; specifically, let (A4) hold with some $r_0>0$. We have the following.
\begin{itemize}
\item[(1)] If
\begin{align} \label{eq:modelhf}
f(x) = \left\{
\begin{array}{lrl}
\ex^x\left[e^{- \int_0^{\tau_{{\overline{B} (0,r_0)}^c}} V(X_s) ds} \right] & \mbox{for} & x \notin \overline B(0,r_0), \\
1  & \mbox{for} & x \in  \overline B(0,r_0),
\end{array}\right.
\end{align}
then for every $|x| \geq 2r_0$ we have
\begin{align*}
%\label{eq:modelharmest}
f(x) \leq
\frac{C_4}{V_{*}(x) \vee \Psi\big(\frac{1}{|x|}\big)}\left(\int \limits_{|z-x| > \frac{|x|}{2}}f(z) \nu(z-x)dz
+ \frac{\Psi\big(\frac{1}{|x|}\big)}{|x|^d}\int \limits_{\frac{|x|}{32} < |x-z| \leq \frac{|x|}{2}} f(z)dz \right).
\end{align*}
\item[(2)] Let $\eta \in (0,1/4]$. If
\begin{align} \label{eq:modelhf_2}
f(y) = \left\{
\begin{array}{lrl}
\ex^y\left[e^{- \int_0^{\tau_{B(x,\eta|x|)^c}} V(X_s) ds} \varphi(X_{\tau_{B(x,\eta|x|)}^c})\right] &
\mbox{  for  } & y \in B(x, \eta|x|), \\
\varphi(y)  & \mbox{  for  } & y \notin B(x,\eta|x|),
\end{array}\right.
\end{align}
for some $x \notin B(0, r_0/(1-\eta))$ and a function $\varphi:\R^d \to [0,\infty)$, not identically zero, then
for every $|y-x| < \frac{\eta|x|}{32}$ we have
\begin{align*}
%\label{eq:modelharmest}
f(y) \leq \frac{C_4}{V_{*}(x) \vee \Psi\big(\frac{1}{|x|}\big)} \left(\int \limits_{|z-x| > 2\eta|x|}f(z) \nu(z-x)dz
+ \frac{\Psi\big(\frac{1}{|x|}\big)}{|x|^d}\int \limits_{\frac{\eta|x|}{8} < |x-z| \leq 2\eta|x|} f(z)dz \right).
\end{align*}
\end{itemize}
\end{corollary}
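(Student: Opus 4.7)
The plan is to reduce both statements to direct applications of Lemma \ref{lm:bhi} by verifying that the functions in (1) and (2) are regular $(X,V)$-harmonic in the appropriate balls, and then matching parameters.

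For part (1), I first observe that the ball $B(x,|x|/4)$ is contained in $\overline{B}(0,r_0)^c$ whenever $|x| \geq 2r_0$, since the distance from $x$ to the origin exceeds the radius by $3|x|/4 \geq 3r_0/2 > r_0$. Thus for any $y \in B(x,|x|/4)$ we have $\tau_{B(x,|x|/4)} \leq \tau_{\overline{B}(0,r_0)^c}$ almost surely under $\pr^y$, and splitting the exponential at the first exit from $B(x,|x|/4)$ together with the strong Markov property yields
$$
f(y) = \ex^y\!\left[e^{-\int_0^{\tau_{B(x,|x|/4)}} V(X_s)ds}\, f(X_{\tau_{B(x,|x|/4)}})\right], \quad y \in B(x,|x|/4).
$$
This is exactly property \eqref{def:harm} with $U = B(x,|x|/4)$, so $f$ is regular $(X,V)$-harmonic there. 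Now the choice $\eta = 1/4$ in Lemma \ref{lm:bhi} matches the integration regions of the claim, since $2\eta|x| = |x|/2$ and $\eta|x|/8 = |x|/32$, and the hypothesis $|x| \geq r_0/(1-\eta) = 4r_0/3$ is automatic for $|x| \geq 2r_0$. Applying the lemma gives the stated estimate.

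For part (2), the function $f$ is tautologically regular $(X,V)$-harmonic in $U = B(x,\eta|x|)$: its definition is precisely $f(y) = \ex^y[e^{-\int_0^{\tau_U} V(X_s)ds} f(X_{\tau_U})]$ in $U$, with boundary data $\varphi \geq 0$ on $U^c$. Since $x \notin B(0,r_0/(1-\eta))$ by hypothesis, Lemma \ref{lm:bhi} applies with the same $\eta$ and yields the stated bound for $|y-x| < \eta|x|/32$.

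The only real checking needed is the harmonicity reduction in (1), which is a clean strong Markov splitting once the inclusion $B(x,|x|/4) \subset \overline{B}(0,r_0)^c$ is verified; after that the corollary is a transcription of Lemma \ref{lm:bhi} with the two natural choices of $\eta$. No genuine obstacle is expected, and the constant $C_4$ appearing in both bounds may be taken as $C_{16}$ from the lemma (or any common upper bound for the two cases).
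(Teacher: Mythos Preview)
Your proof is correct and follows essentially the same approach as the paper: verify via the strong Markov property that the given functions are regular $(X,V)$-harmonic in the relevant balls, then apply Lemma \ref{lm:bhi} with $\eta = 1/4$ for part (1) and the given $\eta$ for part (2). Your version is in fact slightly more careful, since you explicitly check the inclusion $B(x,|x|/4) \subset \overline{B}(0,r_0)^c$ and the compatibility condition $|x| \geq r_0/(1-\eta)$, which the paper leaves implicit.
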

\noindent
The above corollary is a straightforward consequence of Lemma \ref{lm:bhi}. Indeed, by the strong Markov property of
the underlying L\'evy process, the function $f$ defined by \eqref{eq:modelhf} is regular $(X,V)$-harmonic in every ball
$B\big(x,|x|/4\big)$, $|x| \geq 2r_0$. Similarly, for given $x \notin \overline B(0, r_0/(1-\eta))$, the function
\eqref{eq:modelhf_2} is regular $(X,V)$-harmonic in a ball $B\big(x,\eta |x|\big)$.

We can now make use of the above estimates and the technical results obtained in the previous section to derive upper
bounds for the zero-energy solutions for potentials satisfying (A4).

\begin{theorem}
\label{thm:upper_bound_pos}
Let $(X_t)_{t \geq 0}$ be a  L\'evy process with L\'evy-Khintchin exponent $\psi$ as in \eqref{eq:Lchexp} such that
Assumptions (A1)-(A3) and (A4) hold; specifically, let (A4) hold with some $r_0>0$. Moreover, let $\varphi$ be a solution
of (\ref{eveq}). Then the following hold.
\begin{itemize}
\item[(1)]
If
$$
\lim_{|x| \to \infty} \frac{\Psi\left(\frac{1}{|x|}\right)}{V_{*}(x)} = 0 \quad \mbox{and} \quad
\int_{|x| > 2r_0} \frac{\nu(x)}{V_{*}(x)} dx < \infty,
$$
then there exists $C>0$ and $R \geq 2r_0$ such that
$$
|\varphi(x)| \leq C \left\|\varphi \right\|_{\infty} \frac{\nu(x)}{V_{*}(x)}, \quad |x| > R.
$$
In particular, $\varphi \in L^1(\R^d)$.

\item[(2)]
If
$$
\lim_{|x| \to \infty} \frac{\Psi\left(\frac{1}{|x|}\right)}{V_{*}(x)} = 0 \quad \mbox{and} \quad
\int_{|x| > 2r_0} \frac{\nu(x)}{V_{*}(x)} dx = \infty,
$$
and
$$
\sup_{|x| \geq 2r_0} \left[\frac{\nu(x)}{V_{*}(x)}
\exp\left(\eta_{*} \int_{2r_0 \leq |z| \leq |x|} \frac{\nu(z)}{V_{*}(z)} dz\right)\right] <  \infty,
$$
with $\eta_{*} :=C^{-1}_1C_4(C_5 \vee 1)C_6$, then there exist $C>0$ and $R \geq 2r_0$ such that
$$
|\varphi(x)| \leq C \left\|\varphi \right\|_{\infty} \frac{\nu(x)}{V^{*}(x)}
\exp\left(\eta_{*} \int_{r_0 \leq |z| \leq |x|} \frac{\nu(z)}{V_{*}(z)} dz\right), \quad |x| > R.
$$
\item[(3)]
If $\liminf_{|x| \to \infty} \frac{\Psi\left(\frac{1}{|x|}\right)}{V_{*}(x)} > 0$, $\varphi \geq 0$,
and $\varphi \in L^p(\R^d)$ for some $p \in (1,\infty)$, then there exists $C>0$ such that
$$
\varphi(x) \leq C \left\|\varphi\right\|_{p} \left(\frac{\left(\int \limits_{|z| > |x|}
\nu(z)^{\frac{p}{p-1}}dz\right)^{\frac{p-1}{p}}}{\Psi\big(\frac{1}{|x|}\big)} + \frac{1}{|x|^{d/p}} \right), \quad |x| > 2r_0.
$$
\end{itemize}
\end{theorem}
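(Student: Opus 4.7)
The plan is to reduce each claim to the self-improving Lemma~\ref{lem:defic1} (for parts (1) and (2)) or to a direct H\"older argument (for part (3)), after first deriving a single pointwise integral inequality for $|\varphi|$ from Corollary~\ref{cor:harm_est}(2). The first step is to realise $\varphi$ as a regular $(X,V)$-harmonic function on $D := B(x,|x|/4)$ for all $|x| \geq 4r_0/3$, so that $V \geq 0$ on $D$ by (A4). In \eqref{eq:eig1} the first term, bounded above by $\theta\, \ex^x[\tau_D]\|\varphi\|_\infty \leq \theta C_{4}\|\varphi\|_\infty/\Psi(4/|x|)$ via \eqref{eq:gen_est_tau}, vanishes as $\theta \downarrow 0$, so we obtain $\varphi(x) = \ex^x[e^{-\int_0^{\tau_D} V(X_s)ds}\varphi(X_{\tau_D})]$. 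Taking absolute values, the function defined on $D$ by $f(y) = \ex^y[e^{-\int_0^{\tau_D} V(X_s)ds}|\varphi(X_{\tau_D})|]$ and set equal to $|\varphi|$ elsewhere is regular $(X,V)$-harmonic in $D$ and dominates $|\varphi|$ pointwise on $D$. Applying Corollary~\ref{cor:harm_est}(2) to $f$ with $\eta = 1/4$ (so that $2\eta|x| = |x|/2$ and $\eta|x|/8 = |x|/32$), together with $\nu(z-x) \asymp C_5 g(|z-x|)$ from (A1), I would obtain for $|x|$ large enough
\begin{equation*}
|\varphi(x)| \leq \frac{c_1}{V_{*}(x)\vee \Psi(1/|x|)}\left(\int_{|z-x|>|x|/2}|\varphi(z)|\,\nu(z-x)\,dz + \frac{\Psi(1/|x|)}{|x|^d}\int_{|x|/32<|z-x|\leq|x|/2}|\varphi(z)|\,dz\right),
\end{equation*}
which is precisely \eqref{eq:1st_iteration} applied to $u(r) = C_5 g(r)$, $v(r) = V_{*}(r)$, $w(r) = c_2\Psi(1/r)$, and $f = |\varphi|$.

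For parts (1) and (2) I would verify the hypotheses of Lemma~\ref{lem:defic1}: the doubling \eqref{eq:doubl_prop} of $u$ from (A1), the doubling of $w$ from the estimate $\Psi(2r)\leq 4(C_2/C_1)\Psi(r)$ noted after \eqref{eq:PruitH}, the tail domination \eqref{eq:dom} from $H(r)\geq \int_{|z|>r}\nu(z)dz$ combined with \eqref{eq:PruitH}, and \eqref{eq:trans} from the first hypothesis of (1)-(2). In case (1) the integrability $\int \nu/V_{*}\,dx < \infty$ coincides with $\int K_{u,v}<\infty$, so \eqref{eq:1st_iteration_hh_2} immediately delivers $|\varphi(x)| \leq C\|\varphi\|_\infty \nu(x)/V_{*}(x)$, from which $\varphi \in L^1$ is automatic. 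In case (2), the supremum assumption is exactly hypothesis \eqref{eq:bound_h} of Lemma~\ref{lem:defic1} with iteration constant $\eta = C_{10}C_{13}$; careful bookkeeping, using $C_{10} \leq C_{6}$ from (A1) and tracking the factors $C_{5}$, $C_{1}^{-1}$, $C_{4}$ absorbed into $C_{13}$ through Corollary~\ref{cor:harm_est} and \eqref{eq:PruitH}, yields the stated value $\eta_* = C_{1}^{-1}C_{4}(C_{5}\vee 1)C_{6}$, and \eqref{eq:1st_iteration_h} delivers the announced exponential decay bound.

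Part (3) is handled differently because the transition condition \eqref{eq:trans} no longer holds; the same integral inequality with $V_{*}(x)\vee \Psi(1/|x|) \geq \Psi(1/|x|)$ is still valid, and I would apply H\"older's inequality with exponents $p$ and $q = p/(p-1)$ to each integral separately. The first integral contributes $\|\varphi\|_p \bigl(\int_{|y|>|x|/2}\nu(y)^q\,dy\bigr)^{1/q}$, which by the doubling (A1) on $\nu$ is bounded by a constant multiple of $\|\varphi\|_p \bigl(\int_{|y|>|x|}\nu(y)^q\,dy\bigr)^{1/q}$; the second integral is bounded by $\|\varphi\|_p |B(0,|x|/2)|^{1/q} \leq c_3 \|\varphi\|_p |x|^{d/q}$, which after multiplication by $\Psi(1/|x|)/|x|^d$ and division by $\Psi(1/|x|)$ produces the $|x|^{-d/p}$ term. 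Assembling the two contributions gives the stated bound. The main obstacle I foresee lies in part (2), namely the precise constant tracking needed to reproduce the explicit value of $\eta_*$; the passage $\theta \downarrow 0$ in the harmonic representation and the handling of signed $\varphi$ are routine but deserve an explicit treatment.
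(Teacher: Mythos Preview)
Your plan for part~(3) is fine and matches the paper: since $\varphi\ge 0$ there, the harmonic representation on $B(x,|x|/4)$ gives back $\varphi$ itself, Corollary~\ref{cor:harm_est}(2) yields the integral inequality with $\varphi$ on both sides, and H\"older finishes.

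For parts~(1)--(2), however, there is a genuine gap. The function you introduce, $f=f_x$, depends on the centre~$x$ through $D=B(x,|x|/4)$, and Corollary~\ref{cor:harm_est}(2) returns the bound
\[
|\varphi(x)|\le f_x(x)\le \frac{c}{V_*(x)\vee\Psi(1/|x|)}\Big(\int_{|z-x|>|x|/2}f_x(z)\,\nu(z-x)\,dz+\frac{\Psi(1/|x|)}{|x|^d}\int_{|x|/32<|z-x|\le |x|/2}f_x(z)\,dz\Big).
\]
On the annulus $\{|x|/32<|z-x|<|x|/4\}\subset D$ you only know $f_x(z)\ge |\varphi(z)|$, not the reverse, so you cannot replace $f_x$ by $|\varphi|$ on the right-hand side. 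Consequently the displayed inequality you claim for $|\varphi|$ is not justified, and Lemma~\ref{lem:defic1} is inapplicable: it requires a \emph{single} bounded function satisfying \eqref{eq:1st_iteration} at every $|x|\ge R_0$, whereas your construction produces a different $f_x$ at each point. Bounding the inner piece crudely by $\|\varphi\|_\infty$ only yields an additive error $\asymp \Psi(1/|x|)/V_*(x)$, which spoils the iteration.

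The paper avoids this by choosing the \emph{global} domain $D=\overline{B}(0,r_0)^c$ in \eqref{eq:eig1}. Letting $\theta\downarrow 0$ and taking absolute values gives $|\varphi(x)|\le \|\varphi\|_\infty f(x)$ with the \emph{fixed} function $f$ of \eqref{eq:modelhf}. This $f$ is regular $(X,V)$-harmonic in every ball $B(x,|x|/4)$, $|x|\ge 2r_0$, so Corollary~\ref{cor:harm_est}(1) produces the inequality \eqref{eq:1st_iteration} for this single $f$, and Lemma~\ref{lem:defic1} then applies directly. Once you switch to this global harmonic majorant, your verification of the hypotheses \eqref{eq:doubl_prop}--\eqref{eq:trans} and the constant bookkeeping for $\eta_*$ go through as written.
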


\begin{proof}
By applying the resolvent formula \eqref{eq:eig1} with any $\theta >0$ and $D=\overline B(0,r_0)^c$, and then on letting
$\theta \downarrow 0$, we obtain
\begin{align}
\label{eq:aux_harm}
\varphi(x) = \ex^x \left[\tau_{\overline{B}(0,r_0)^c} < \infty; e^{-\int_0^{\tau_{\overline B(0,r_0)^c}} V(X_s)ds}
\varphi(X_{\tau_{\overline{B}(0,r_0)^c}})\right], \quad |x| > r_0,
\end{align}
and hence
$$
|\varphi(x)| \leq \left\|\varphi\right\|_{\infty} \ex^x \left[e^{-\int_0^{\tau_{\overline{B}(0,r_0)^c }} V(X_s)ds} \right],
\quad |x| > r_0.
$$
Next, let $f$ be the function defined in \eqref{eq:modelhf}. With this we obtain
\begin{align}
\label{eq:phibyf}
|\varphi(x)| \leq \left\|\varphi\right\|_{\infty} f(x), \quad |x| > r_0.
\end{align}
By Corollary \ref{cor:harm_est} (1), we get
$$
f(x) \leq \frac{C_4}{V_{*}(x) \vee \Psi\big(\frac{1}{|x|}\big)}
\left(\int \limits_{|z-x| > \frac{|x|}{2}}f(z) \nu(z-x)dz + \frac{\Psi\big(\frac{1}{|x|}\big)}{|x|^d}
\int \limits_{\frac{|x|}{32} < |x-z| \leq \frac{|x|}{2}} f(z)dz \right), \quad |x| > 2r_0.
$$
In particular,
$$
f(x) \leq \frac{C^{-1}_1C_4(C_5 \vee 1)}{v(|x|) \vee w(|x|)}
\left(\int \limits_{|z-x| > \frac{|x|}{2}}f(z) u(|z-x|)dz + \frac{w(|x|)}{|x|^d}\int \limits_{\frac{|x|}{32} < |x-z| \leq
\frac{|x|}{2}} f(z)dz \right), \quad |x| > 2r_0,
$$
where
$$
u(|x|) := C_5 g(|x|), \quad w(|x|):= C_1^{-1}\Psi\left(\frac{1}{|x|}\right), \quad v(|x|) := V_{*}(|x|), \quad |x| \geq 2r_0.
$$

To show parts (1)-(2) of the theorem note that, by assumption, $\lim_{|x| \to \infty} \Psi(1/|x|)/v(|x|) = 0$, thus we may
assume that $r_0$ is large enough such that $\Psi(1/|x|) \vee v(|x|) = v(|x|)$, for $|x| \geq 2r_0$. This means that the
estimate \eqref{eq:1st_iteration} holds with constant $C_{13}=C^{-1}_1C_4(C_5 \vee 1)$, radius $R_0:=2r_0 \vee 1$, and the
above defined functions $u, w$ and $v$. Moreover, assumptions \eqref{eq:doubl_prop}-\eqref{eq:trans} are also satisfied.
Then the first two statements of the theorem follow directly by Lemma \ref{lem:defic1} and \eqref{eq:phibyf}.

Next we prove the remaining part (3). First note that $\varphi \geq 0$ and similarly as in \eqref{eq:aux_harm} we have
$$
\varphi(x) = \ex^x \left[e^{-\int_0^{\tau_{B(x,|x|/4)}  } V(X_s)ds}\varphi(X_{\tau_{B(x,|x|/4)}})\right], \quad |x| > 2r_0.
$$
Corollary \ref{cor:harm_est} (2) gives the bound
$$
\varphi(x) \leq \frac{C_4}{\Psi\big(\frac{1}{|x|}\big)} \left(\int \limits_{|z-x| > \frac{|x|}{2}}\varphi(z) \nu(z-x)dz
+ \frac{\Psi\big(\frac{1}{|x|}\big)}{|x|^d}\int \limits_{\frac{|x|}{32} < |x-z| \leq \frac{|x|}{2}} \varphi(z)dz \right),
\quad |x| > 2r_0,
$$
Finally, by H\"older inequality with suitable $p, q$,
\begin{align*}
\varphi(x)
& \leq
\frac{C_4}{\Psi\big(\frac{1}{|x|}\big)} \left\|\varphi\right\|_p \left(\int \limits_{|z| > \frac{|x|}{2}} \nu(z)^q dz\right)^{1/q}
+ \frac{c}{|x|^d} \left\|\varphi\right\|_p |B(0,|x|/2)|^{1/q} \\
& \leq
c_1 \left\|\varphi\right\|_p \left(\frac{\left(\int \limits_{|z| > |x|} \nu(z)^{\frac{p}{p-1}}dz\right)^{\frac{p-1}{p}}}
{\Psi\big(\frac{1}{|x|}\big)} + \frac{1}{|x|^{d/p}} \right), \quad |x| > 2r_0,
\end{align*}
which completes the proof.
\end{proof}

As it will be seen below, Theorem \ref{thm:upper_bound_pos} (1)-(2) gives sharp upper bounds, provided $\varphi \geq 0$ (compare
with the lower bounds in Theorem \ref{thm:lower_bound_pos}). We will now prove that if $\varphi$ is antisymmetric with respect to
a given $(d-1)$-dimensional hyperplane $\pi$ in $\R^d$ with $\vec{0} \in \pi$, and has a definite sign on both of the corresponding
half-spaces, then the decay rate in (1) of $|\varphi|$ at infinity far away from $\pi$ improves, while the upper bound in (3)
remains unchanged. By rotating the coordinate system if necessary, we may assume that $\pi = \{x \in \R^d:  x_1 = 0\}$. We make
the assumption
\smallskip
\begin{align}
\label{eq:antisymmetry}
  \begin{array}{c}
	 \mbox{$\varphi\big((-x_1, x_2, ..., x_d)\big) = - \varphi\big((x_1,x_2, ..., x_d) \big)$, \ \ \
           $x = (x_1,...,x_d) \in \R^d,$ }
	\vspace{0.2cm} \\
	\mbox{and \ \ $\varphi\big((x_1,...,x_d)\big) \geq 0$ \ \ whenever \ \ $x_1>0$.}
  \end{array}
\end{align}
\smallskip
The next theorem deals with the case when $\varphi$ has no definite sign, but does satisfy \eqref{eq:antisymmetry}.

\begin{theorem}
\label{thm:upper_bound_pos_antisymm}
Let $(X_t)_{t \geq 0}$ be a  L\'evy process with L\'evy-Khintchin exponent $\psi$ as in \eqref{eq:Lchexp} such that
Assumptions (A1)-(A3) and (A4) hold; specifically, let (A4) hold with some $r_0>0$. Suppose that there exist $C_0 > 0$
and $R_0 > 0$ such that
\begin{align}
\label{eq:smoothnu}
|\nu(z_1) - \nu(z_2)| \leq  C_0 \frac{\nu(z_2)}{|z_2|}|z_1-z_2|, \quad |z_1| \geq |z_2| \geq R_0,
\end{align}
Moreover, let $\varphi$ be a solution of \eqref{eveq} such that \eqref{eq:antisymmetry} holds. We have
the following.
\begin{itemize}
\item[(1)]
If $\lim_{|x| \to \infty} \frac{\Psi\left(\frac{1}{|x|}\right)}{V_{*}(x)} = 0$, $\int_{|x| > 2r_0} \frac{\nu(x)}{V_{*}(x)} dx
< \infty$, $\varphi \in L^1(\R^d)$ and there exists a constant $C_1>0$ such that
\begin{align}
\label{eq:doub_monot}
\frac{\nu(y)}{V_{*}(y)} \leq C_1 \frac{\nu(x)}{V_{*}(x)}, \quad |y| \geq |x|/2 \geq 2r_0,
\end{align}
then there exists $C>0$ and $R \geq 2r_0 \vee 4R_0$ such that
$$
|\varphi(x)| \leq C \, (\left\|\varphi \right\|_{\infty} \vee \left\|\varphi \right\|_{1}) \,
\frac{\nu(x)}{V_{*}(x)} \left(\frac{\Psi(1/|x|)}{V_{*}(x)} \vee
\left(\frac{1}{|x|} \int_{2r_0 <|z| < \frac{|x|}{2}} |z_1|\frac{\nu(z)}{V_{*}(z)} dz\right) \right), \quad |x_1| > R.
$$
\item[(2)]
If $\liminf_{|x| \to \infty} \frac{\Psi\left(\frac{1}{|x|}\right)}{V_{*}(x)} > 0$ and $\varphi \in L^p(\R^d)$, $p > 1$,
then there exists $C>0$ such that
$$
|\varphi(x)| \leq C
\left\|\varphi\right\|_{p} \left(\frac{\left(\int \limits_{|z| > |x|} \nu(z)^{\frac{p}{p-1}}dz\right)^{\frac{p-1}{p}}}
{\Psi\big(\frac{1}{|x|}\big)} + \frac{1}{|x|^{d/p}} \right), \quad |x_1| >  2r_0.
$$
\end{itemize}
\end{theorem}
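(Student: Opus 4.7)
The proof exploits the antisymmetry of $\varphi$ via the reflection $\sigma:(x_1,x_2,\ldots,x_d)\mapsto(-x_1,x_2,\ldots,x_d)$ across the hyperplane $\pi=\{x_1=0\}$. The key observation is that in the Ikeda--Watanabe representation of $\varphi(x)$ coming from the Feynman--Kac Green function on a ball around $x$, the kernel $\nu(z-y)$ integrated against the antisymmetric $\varphi$ may be replaced by the increment $\nu(z-y)-\nu(\sigma z-y)$. Since for $y_1,z_1>0$ one has $|\sigma z-y|\geq|z-y|$ with $|\sigma z-y|-|z-y|\leq|\sigma z-z|=2z_1$, the smoothness hypothesis \eqref{eq:smoothnu} yields $|\nu(z-y)-\nu(\sigma z-y)|\leq 2C_0 z_1\nu(z-y)/|z-y|$, and this extra factor $z_1/|z-y|$ is the source of the improvement over Theorem~\ref{thm:upper_bound_pos}(1).

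For part (1), WLOG take $x_1>0$ and write $A=\{z_1>0\}$. Pick a ball $B=B(x,r)\subset A$ with $r$ of order $x_1$ so that $V\geq 0$ on $B$ (using $|x_1|>R\geq 2r_0$). Taking $\theta\downarrow 0$ in \eqref{eq:eig1} with $D=B$ and applying the Ikeda--Watanabe formula \eqref{eq:IWF} adapted to the Feynman--Kac Green function $G_B^V$ gives
\[
\varphi(x)=\int_B G_B^V(x,y)\int_{B^c}\nu(z-y)\varphi(z)\,dz\,dy.
\]
The substitution $z\mapsto\sigma z$ in the $\{z_1<0\}$-part of the inner integral, together with $\sigma B\cap A=\emptyset$ and $\varphi\circ\sigma=-\varphi$, rewrites this as
\[
\varphi(x)=\int_B G_B^V(x,y)\left(\int_{B^c\cap A}[\nu(z-y)-\nu(\sigma z-y)]\varphi(z)\,dz-\int_B\nu(\sigma z-y)\varphi(z)\,dz\right)dy.
\]
Bound the increment via \eqref{eq:smoothnu}, insert the baseline estimate $|\varphi(z)|\leq C\|\varphi\|_\infty\nu(z)/V_*(z)$ from Theorem~\ref{thm:upper_bound_pos}(1) (whose hypotheses are subsumed by ours), and absorb the contribution of $|z|<2r_0$ into the $\|\varphi\|_1$-term. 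Splitting the source $z$ into near ($|z|<|x|/2$), middle ($|x|/2\leq|z|\leq 2|x|$), and far ($|z|>2|x|$) ranges, and invoking the Lyapunov bound $\int_B G_B^V(x,y)\,dy\leq 1/V_*(x)$, Poisson-kernel-type estimates $\int_B G_B^V(x,y)\nu(z-y)\,dy\leq c\,\nu(z-x)/V_*(x)$ for $z$ outside a neighborhood of $B$, doubling of $\nu/V_*$ via \eqref{eq:doub_monot}, and the Pruitt-type relation $g(|x|)|x|^d\leq c'\Psi(1/|x|)$, one finds that the near-source region yields $(\nu(x)/V_*(x))\cdot|x|^{-1}\int_{2r_0<|z|<|x|/2}|z_1|\nu(z)/V_*(z)\,dz$, while the middle, far, and boundary contributions are each dominated by $(\nu(x)/V_*(x))\cdot\Psi(1/|x|)/V_*(x)$; taking the maximum gives the claim.

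For part (2), the hypothesis $\liminf_{|x|\to\infty}\Psi(1/|x|)/V_*(x)>0$ means antisymmetry offers no asymptotic improvement over the kinetic-dominated estimate of Theorem~\ref{thm:upper_bound_pos}(3). From $\varphi(x)=\ex^x[e^{-\int_0^{\tau_B}V(X_s)\,ds}\varphi(X_{\tau_B})]$ one obtains the sub-harmonic inequality $|\varphi(x)|\leq\ex^x[e^{-\int_0^{\tau_B}V(X_s)\,ds}|\varphi|(X_{\tau_B})]$, so Corollary~\ref{cor:harm_est}(2) applies to the non-negative function $|\varphi|$, and H\"older's inequality together with $\varphi\in L^p(\R^d)$ closes the proof exactly as in Theorem~\ref{thm:upper_bound_pos}(3). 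The principal technical obstacle in (1) is the boundary term $\int_B\nu(\sigma z-y)\varphi(z)\,dz$: since $|\sigma z-y|\asymp x_1$ on $B$, a naive estimate produces a $\Psi(1/x_1)$ rather than the desired $\Psi(1/|x|)$ dependence, and one recovers the latter only by pairing Pruitt's estimate with the sharper mean exit-time bound $\ex^x[\tau_B]\leq C/\Psi(1/x_1)$ from \eqref{eq:gen_est_tau} so that $x_1$-dependencies cancel, combined with the doubling provided by \eqref{eq:doub_monot}.
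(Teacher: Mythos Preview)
Your overall strategy for part (1) --- fold the Ikeda--Watanabe representation via the reflection $\sigma$ and exploit \eqref{eq:smoothnu} on the resulting increment --- is natural, but the execution breaks down because of your choice of ball. You take $B=B(x,r)$ with $r$ of order $x_1$ in order to force $B\subset A=\{z_1>0\}$, whereas the theorem asserts a bound in terms of $|x|$ under the sole restriction $|x_1|>R$; in particular $x_1$ may be a fixed constant while $|x|\to\infty$. With $r\asymp x_1$ the boundary term $\int_B\nu(\sigma z-y)\varphi(z)\,dz$ satisfies $|\sigma z-y|\asymp x_1$, so after multiplying by $\int_B G^V_B(x,y)\,dy$ the two options you list give either $(1/V_*(x))\cdot g(x_1)x_1^d\cdot\nu(x)/V_*(x)\asymp(\nu(x)/V_*(x))\cdot\Psi(1/x_1)/V_*(x)$ or, using $\ex^x[\tau_B]\leq C/\Psi(1/x_1)$ and the Pruitt relation, simply $\nu(x)/V_*(x)$ --- the baseline. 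Neither recovers the factor $\Psi(1/|x|)/V_*(x)$, since $\Psi(1/x_1)\geq\Psi(1/|x|)$ always. The same scale mismatch infects the middle region $|z|\asymp|x|$: for $z$ near $\partial B$ the quantity $|z-y|$ has no useful lower bound, so \eqref{eq:smoothnu} is unavailable and the factor $z_1/|z-y|$ in your increment estimate is uncontrolled. The paper avoids all of this by taking the ball $B(x,|x|/4)$ of radius $\asymp|x|$ (\emph{not} requiring $B\subset A$) and decomposing $\varphi(x)$ according to where $X_{\tau_B}$ lands: into $B(0,|x|/2)$ (term $I_2$, where the antisymmetry and \eqref{eq:smoothnu} are used with $|y-z|\geq|x|/4$), into $H_+\cap B(0,|x|/2)^c$ (term $I_3$, handled via Corollary~\ref{cor:harm_est}(2)), and into $H_-\cap B(0,|x|/2)^c$ (term $I_1$, moved to the left side). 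This decomposition is the missing idea.

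Your sketch of part (2) also has a gap. The inequality $|\varphi(x)|\leq\ex^x[e^{-\int V}|\varphi|(X_{\tau_B})]$ is correct, and Corollary~\ref{cor:harm_est}(2) applies to the $(X,V)$-harmonic extension $f_x$ of $|\varphi|$, but the resulting estimate involves $\int_{\frac{\eta|x|}{8}<|x-z|\leq 2\eta|x|}f_x(z)\,dz$, and on the inner part of this annulus $f_x(z)\neq|\varphi(z)|$. In Theorem~\ref{thm:upper_bound_pos}(3) this issue is absent because there $\varphi\geq0$ is itself $(X,V)$-harmonic. Here one must write $f_x(z)\leq|\varphi(z)|+2\,\ex^z[\varphi_-(X_{\tau_B})]$ and then control the last term using Ikeda--Watanabe together with $\operatorname{supp}\varphi_-\subset\{w_1<0\}$, which forces $|w-y|$ large and yields the extra $\nu(x)/\Psi(1/|x|)$-type bound. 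This is precisely what the paper does (see the estimate of $\ex^z[\varphi_-(X_{\tau_B})]$ in the proof); your claim that H\"older alone ``closes the proof exactly as in Theorem~\ref{thm:upper_bound_pos}(3)'' skips this step.
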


\begin{proof}
Using the resolvent formula \eqref{eq:eig1} with $\theta >0$ and $D=\overline B(x,|x|/4)^c$, and then letting $\theta
\downarrow 0$, we have
$$
\varphi(y) = \ex^y \left[e^{-\int_0^{\tau_{ B(x,|x|/4)}  } V(X_s)ds}\varphi(X_{\tau_{B(x,|x|/4)}})\right], \quad |y-x| <
\frac{|x|}{4}, \ \ |x| \geq 2r_0.
$$
Denote $H_{+}:= \left\{z \in \R^d: z_1 > 0 \right\}$ and $H_{-}:= \left\{z \in \R^d: z_1 < 0 \right\}$. With this notation,
by \eqref{eq:antisymmetry} we obtain
\begin{align}
\varphi(y) & +  \underbrace{\ex^y \left[e^{-\int_0^{\tau_{ B(x,|x|/4)}  } V(X_s)ds}
\varphi_{-}(X_{\tau_{B(x,|x|/4)}}); X_{\tau_{B(x,|x|/4)}} \in H_{-}  \cap B(0,|x|/2)^c \right]}_{=: \, I_1(x,y)} \nonumber \\
& =
\underbrace{\ex^y \left[e^{-\int_0^{\tau_{ B(x,|x|/4)}  } V(X_s)ds}\varphi(X_{\tau_{B(x,|x|/4)}});
X_{\tau_{B(x,|x|/4)}} \in B(0,|x|/2) \right]}_{=: \, I_2(x,y)} \label{eq:antisymm}\\
& \ \ \ \ +
\underbrace{\ex^y \left[e^{-\int_0^{\tau_{ B(x,|x|/4)}  } V(X_s)ds}\varphi_{+}(X_{\tau_{B(x,|x|/4)}}); X_{\tau_{B(x,|x|/4)}}
\in H_{+}  \cap B(0,|x|/2)^c \right]}_{=:\, I_3(x,y)} .\nonumber
\end{align}
In particular,
$$
\varphi(x) = |\varphi(x)| \leq |I_2(x,x)| + I_3(x,x), \quad x_1 \geq 2r_0.
$$
To obtain (1), first we estimate $|I_2(x,x)|$. By the Ikeda-Watanabe formula \eqref{eq:IWF}, the change of variable
$(z_1,...,z_d)=z \mapsto \hat{z}=(-z_1,z_2,...,z_d)$ in the inner integral over $H_{-} \cap B(0,|x|/2)$, and
\eqref{eq:antisymmetry} we have
\begin{align*}
I_2(x,x) & = \int_{B(x,|x|/4)} G^V_D(x,dy) \int_{B(0,|x|/2)} \varphi(z) \nu(y-z) dz \\
       & = \int_{B(x,|x|/4)} G^V_D(x,dy) \int_{H_{+} \cap B(0,|x|/2)} \varphi(z) \left(\nu(y-z) - \nu(y-\hat{z})\right) dz.
\end{align*}
Thus by \eqref{eq:antisymmetry}, (A1), and Theorem \ref{thm:upper_bound_pos} (1) there exist $c_1, c_2, c_3 >0$ and
$\widetilde R \geq 2r_0$ such that for $|x| > 2 \widetilde R \vee 4R_0$ it follows that
\begin{align*}
|I_2(x,x)|
& \leq c_1 \, \frac{\nu(x)}{|x|} \, G^V_D\big(x,B(x,|x|/4)\big) \, \int_{B(0,|x|/2)} |\varphi(z)| |z_1| dz \\
& \leq c_2 \, \frac{\nu(x)}{|x| V_{*}(x)} \, \left(\int_{|z| \leq \widetilde R} |\varphi(z)| |z_1| dz
+ \int_{2r_0 < |z| < \frac{|x|}{2}} \frac{\nu(z)}{V_{*}(z)} |z_1| dz \right) \\
& \leq c_3 \, \frac{\nu(x)}{V_{*}(x)} \frac{1}{|x|} \, \int_{2r_0 < |z| < \frac{|x|}{2}} \frac{\nu(z)}{V_{*}(z)} |z_1| dz.
\end{align*}
To estimate $I_3$, denote
\begin{align}
\label{eq:modelhf_antisymm}
f_x(y) = \left\{
\begin{array}{lrl}
I_3(x,y) & \mbox{ for } & y \in B(x, |x|/4), \\
\varphi(y) \1_{H_{+}  \cap B(0,|x|/2)^c}(y)  & \mbox{ for } & y \notin B(x,|x|/4),
\end{array}\right.
\end{align}
for every $|x| \geq 2r_0$. The function $f_x(y)$ is regular $(X,V)$-harmonic in a ball $B(x,|x|/2)$, and by Corollary
\ref{cor:harm_est} (2) we have
\begin{align*}
f_x(y) \leq \frac{C_4}{V_{*}(x) \vee \Psi\big(\frac{1}{|x|}\big)}
\left(\int \limits_{|z-x| > \frac{|x|}{2}}f_x(z) \nu(z-x)dz + \frac{\Psi\big(\frac{1}{|x|}\big)}{|x|^d}
\int \limits_{\frac{|x|}{32} < |x-z| \leq \frac{|x|}{2}} f_x(z)dz \right)
\end{align*}
as long as $|y-x| < |x|/128$ and $|x| \geq 2r_0$. By the definition in \eqref{eq:modelhf_antisymm} and the
assumption that $\lim_{|x| \to \infty} \frac{\Psi\left(\frac{1}{|x|}\right)}{V_{*}(x)} = 0$, there exists $R \geq 2r_0$
such that the above estimate gives for $|x| \geq R$
\begin{align*}
I_3(x,x) & \leq \frac{C_4}{V_{*}(x)} \int \limits_{|z-x| > \frac{|x|}{2} \atop |z| > \frac{|x|}{2}, \, z_1 > 0}
\varphi(z) \nu(z-x)dz + C_4 \frac{\Psi\big(\frac{1}{|x|}\big)}{|x|^d V_{*}(x)}
\int \limits_{\frac{|x|}{4} \leq |x-z| \leq \frac{|x|}{2}} \varphi(z)dz\\
& \ \ \ \ \ \ \ \ \ \ \ \ \ \ \ \ \ \ \ + \ C_4 \frac{\Psi\big(\frac{1}{|x|}\big)}{|x|^d V_{*}(x)}
\int \limits_{\frac{|x|}{32} < |x-z| < \frac{|x|}{4}} I_3(x,z) dz =: J_1(x) + J_2(x) + J_3(x).
\end{align*}
The terms $J_1$ and $J_2$ can be estimated directly by using Theorem \ref{thm:upper_bound_pos} (1), \eqref{eq:doub_monot}
and \eqref{eq:PruitH}. Indeed, by increasing $R>0$ if necessary, we have
$$
J_1(x) \leq \frac{c_4}{V_{*}(x)} \left(\sup_{|y| < \frac{|x|}{2}} \frac{\nu(y)}{V_{*}(y)} \right)
\int_{|z| > \frac{|x|}{2}} \nu(z) dz
\leq c_5 \frac{\nu(x)}{V_{*}(x)} \frac{\Psi\big(\frac{1}{|x|}\big)}{V_{*}(x)}, \quad |x| \geq R,
$$
and
$$
J_2(x) \leq c_6 \frac{\Psi\big(\frac{1}{|x|}\big)}{V_{*}(x)}\left(\sup_{|y| < \frac{|x|}{2}} \frac{\nu(y)}{V_{*}(y)} \right)
       \leq c_7 \frac{\nu(x)}{V_{*}(x)} \frac{\Psi\big(\frac{1}{|x|}\big)}{V_{*}(x)}, \quad |x| \geq R,
$$
for some positive constants $c_4, ..., c_7$. Moreover, there exists $c_8 >0$ such that
$$
J_3(x) \leq c_8 \frac{\Psi\big(\frac{1}{|x|}\big)}{V_{*}(x)} \sup_{y: \, |y-x| < \frac{|x|}{4}} I_3(x,y), \quad |x| \geq R.
$$
By \eqref{eq:antisymm} and Theorem \ref{thm:upper_bound_pos} (1) we get
\begin{align*}
I_3(x,y)
& \leq
\varphi(y) + \ex^y \left[e^{-\int_0^{\tau_{ B(x,|x|/4)}  } V(X_s)ds}\varphi_{-}(X_{\tau_{B(x,|x|/4)}})\right] \\
& \leq
c_9 \left\|\varphi\right\|_{\infty}\frac{\nu(y)}{V_{*}(y)} + \ex^y \left[e^{-\int_0^{\tau_{ B(x,|x|/4)}  } V(X_s)ds}
\varphi_{-}(X_{\tau_{B(x,|x|/4)}})\right], \quad |y-x| < \frac{|x|}{4}, \ \ x_1 > R,
\end{align*}
and by one more use of \eqref{eq:IWF},
\begin{eqnarray*}
\lefteqn{
\ex^y \left[e^{-\int_0^{\tau_{ B(x,|x|/4)}}V(X_s)ds}\varphi_-(X_{\tau_{B(x,|x|/4)}})\right] } \\
&\qquad =&
c_{10} \int_{B(x,|x|/4)} G^V_{B(x,|x|/4)}(y,z) \int_{\left\{w: \, w_1 <0\right\}} \varphi_-(w) \nu(w-z) dwdz \\
&\qquad \leq &
c_{11} \left\|\varphi\right\|_1 \, \frac{\nu(x)}{V_{*}(x)},
\end{eqnarray*}
for $|y-x| < \frac{|x|}{4}$ and $x_1 > R$. Due to \eqref{eq:doub_monot}, this means that
$$
\sup_{y: \, |y-x| < \frac{|x|}{4}} I_3(x,y) \leq c_{12} \, (\left\|\varphi\right\|_{\infty}
\vee \left\|\varphi\right\|_1) \frac{\nu(x)}{V_{*}(x)}, \quad x_1 > R.
$$
Putting together all the above estimates, we see that the upper bound in assertion (1) holds.

To establish (2), observe that similarly as above we have for $|y-x| < \frac{|x|}{4}$ and $|x| \geq 2r_0$
that
$$
\varphi(y) = \ex^y \left[e^{-\int_0^{\tau_{ B(x,|x|/4)}  } V(X_s)ds}
\left(\varphi_+(X_{\tau_{B(x,|x|/4)}})-\varphi_-(X_{\tau_{B(x,|x|/4)}})\right)\right],
$$
which yields
\begin{align}
\label{eq:aux_est_antisymm}
\varphi(y) \leq \ex^y \left[e^{-\int_0^{\tau_{ B(x,|x|/4)}  } V(X_s)ds}\varphi_+(X_{\tau_{B(x,|x|/4)}})\right] \leq
\varphi(y) + \ex^y \left[\varphi_-(X_{\tau_{B(x,|x|/4)}})\right], \quad |x| \geq 2r_0.
\end{align}
Define
\begin{align*}
f_x(y) = \left\{
\begin{array}{lrl}
\ex^y \left[e^{-\int_0^{\tau_{ B(x,|x|/4)}  } V(X_s)ds}\varphi_+(X_{\tau_{B(x,|x|/4)}})\right] & \mbox{for} & y \in B(x, |x|/4), \\
\varphi_+(y)  & \mbox{for} & y \notin B(x,|x|/4),
\end{array}\right.
\end{align*}
for every $|x| \geq 2r_0$. By the first inequality in \eqref{eq:aux_est_antisymm} and Corollary \ref{cor:harm_est} (2),
we have
\begin{align*}
\varphi(x) \leq\frac{C_4}{\Psi\big(\frac{1}{|x|}\big)} \left(\int \limits_{|z-x| > \frac{|x|}{2}}\varphi_+(z) \nu(z-x)dz
+ \frac{\Psi\big(\frac{1}{|x|}\big)}{|x|^d}\int \limits_{\frac{|x|}{32} < |x-z| \leq \frac{|x|}{2}} f_x(z)dz \right),
\quad x_1 > 2r_0,
\end{align*}
giving
\begin{align}
\label{eq:aux_est_antisymm_2}
\varphi(x) \leq c_{13} \left\|\varphi\right\|_p \frac{\left(\int \limits_{|z| > |x| }
\nu(z)^q dz\right)^{1/q}}{\Psi\big(\frac{1}{|x|}\big)}
+ \frac{c_{14}}{|x|^{d}} \int \limits_{\frac{|x|}{32} < |x-z| \leq \frac{|x|}{2}} f_x(z) dz, \quad x_1 > 2r_0.
\end{align}
It suffices to estimate the latter integral. By the definition of the function $f_x$ and the second inequality in
\eqref{eq:aux_est_antisymm}, we have
$$
\int \limits_{\frac{|x|}{32} < |x-z| \leq \frac{|x|}{2}} f_x(z) dz \leq \int \limits_{\frac{|x|}{32} < |x-z| \leq
\frac{|x|}{2}} |\varphi(z)| dz + \int \limits_{\frac{|x|}{32} < |x-z| \leq
\frac{|x|}{4}} \ex^z \left[\varphi_-(X_{\tau_{B(x,|x|/4)}})\right] dz.
$$
Similarly as above, \eqref{eq:IWF} implies that for all $|z-x|<|x|/4$, $x_1 > 2r_0$, we have
\begin{align*}
\ex^z \left[\varphi_-(X_{\tau_{B(x,|x|/4)}})\right]
& =
c_{15} \int_{B(x,|x|/4)} G_{B(x,|x|/4)}(z,y) \int_{\left\{w: \, w_1 <0\right\}} \varphi_-(w) \nu(w-y) dwdy \\
& \leq
\frac{c_{16}}{\Psi\big(\frac{1}{|x|}\big)} \left\|\varphi\right\|_p \left(\int_{|y| > |x|} \nu(y)^q dy \right)^{1/q},
\end{align*}
and thus
$$
\int \limits_{\frac{|x|}{32} < |x-z| \leq \frac{|x|}{2}} f_x(z) dz
\leq
c_{17} \left\|\varphi\right\|_p |x|^{d/q} + c_{18}\left\|\varphi\right\|_p |x|^d \frac{\left(\int_{|y| > |x|}
\nu(y)^q dy \right)^{1/q}}{\Psi\big(\frac{1}{|x|}\big)}.
$$
Inserting this estimate into \eqref{eq:aux_est_antisymm_2}, the claimed bound in (3) follows.
\end{proof}
\noindent
As it will be seen in specific cases in Section 6 below, by iterating the bounds in (1)-(2), we can often get the
bound with $1/|x|$ instead of $\frac{\Psi(1/|x|)}{V_{*}(x)} \vee \frac{1}{|x|}$.

\subsection{Lower bound}
\noindent
For a given potential $V$ satisfying Assumption (A4) denote
$$
V^{*}(x) := \sup_{|y| \geq \frac{|x|}{2}} V(y), \quad |x| \geq 2 r_0,
$$
and
$$
\Lambda_{B(x,|x|/2)}(x) := \ex^x \left[\int_0^{\tau_{B(x,|x|/2)}} e^{- V^{*}(x) t} dt\right], \quad |x| \geq 2 r_0.
$$
Clearly, $V^{*}(x)$ is a radial and non-increasing function. The auxiliary function $\Lambda_V$ has the suggestive
meaning of lower envelope of the mean lifetime of the L\'evy process under the potential $V$ in a ball $B(x,|x|/2)$,
that is,
\begin{equation}
\label{meanlif}
\Lambda_{B(x,|x|/2)}(x) \leq \ex^x \left[\int_0^{\tau_{B(x,|x|/2)}} e^{- \int_0^t V(X_s) ds} dt\right], \quad |x| \geq 2 r_0.
\end{equation}

The first lemma gives a lower estimate on $\Lambda_{B(x,|x|/2)}(x)$.
\begin{lemma} \label{lem:meanexit}
Let $(X_t)_{t \geq 0}$ be a  L\'evy process with L\'evy-Khintchin exponent $\psi$ as in \eqref{eq:Lchexp} such that
Assumptions (A1)-(A3) hold, and let $V$ be a potential satisfying (A4); specifically let (A4) hold with some $r_0>0$.
Then there exists $C>0$ such that
$$
\Lambda_{B(x,|x|/2)}(x) \geq \frac{C}{V_{*}(x) \vee \Psi\big(\frac{1}{|x|}\big)}, \quad |x| \geq 2r_0.
$$
\end{lemma}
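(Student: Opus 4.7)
The plan is to bound $\Lambda_{B(x,|x|/2)}(x)$ below by restricting the time integral to a short window on which the survival probability $\pr^x(\tau_{B(x,|x|/2)}>t)$ is bounded uniformly away from zero, and then concluding by a simple two-case estimate.

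First, by translation invariance of the L\'evy process and Fubini's theorem,
\begin{align*}
\Lambda_{B(x,|x|/2)}(x)=\int_0^\infty e^{-V^{*}(x)t}\,\pr^{0}(\tau_{B(0,|x|/2)}>t)\,dt.
\end{align*}
The key ingredient I would establish is the uniform-in-radius survival bound
\begin{align*}
\pr^0(\tau_{B(0,r)}\le t)\le c\,t\,\Psi(1/r),\qquad r\ge 1,\ t>0.
\end{align*}
This follows from Dynkin's formula applied to a cut-off function $f_r\in\cB$ introduced in Section 2: since $\|Lf_r\|_\infty\le C_3(L,r)\le c'\,\Psi(1/r)$ by \eqref{def:C9}, the stopped martingale $f_r(X_{t\wedge\tau})-\int_0^{t\wedge\tau}Lf_r(X_s)\,ds$ together with $0\le f_r\le 1$ yields the claim. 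Choosing $t_0:=c_0/\Psi(1/|x|)$ with $c_0$ small enough then gives $\pr^0(\tau_{B(0,|x|/2)}>t_0)\ge 1/2$.

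Truncating the integral at $t_0$,
\begin{align*}
\Lambda_{B(x,|x|/2)}(x)\ge \tfrac12\int_0^{t_0}e^{-V^{*}(x)t}\,dt=\frac{1-e^{-V^{*}(x)t_0}}{2\,V^{*}(x)}.
\end{align*}
A two-case analysis then finishes: when $V^{*}(x)t_0\ge 1$ the numerator is at least $1-e^{-1}$, yielding $\Lambda_{B(x,|x|/2)}(x)\ge c_1/V^{*}(x)$; when $V^{*}(x)t_0<1$, the elementary inequality $1-e^{-u}\ge u/2$ on $[0,1]$ gives $\Lambda_{B(x,|x|/2)}(x)\ge c_2 t_0\asymp 1/\Psi(1/|x|)$. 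Combining both regimes yields $\Lambda_{B(x,|x|/2)}(x)\ge C/(V^{*}(x)\vee\Psi(1/|x|))$. To obtain the sharper form with $V_{*}(x)$ in the denominator, I would use that along any trajectory up to the exit time $\tau_{B(x,|x|/2)}$ the position $X_s$ remains in the annulus $\{|x|/2\le|y|\le 3|x|/2\}$: the constant $V^{*}(x)$ in the definition of $\Lambda$ may therefore be replaced by the smaller supremum of $V$ over this annulus, which under Assumption \textbf{(A4)} is comparable to $V_{*}(x)$.

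The main obstacle is the uniform survival estimate of the first step; everything after that is a routine truncation and case analysis. While the survival bound is standard, it has to be set up using tools already introduced in Section 2, namely the cut-off family $\cB$ and the bound \eqref{def:C9} on $\|Lf_s\|_\infty$. The secondary point to watch is the passage from $V^{*}$ to $V_{*}$ in the final bound, but it is handled by the annulus observation above.
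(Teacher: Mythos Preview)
Your argument matches the paper's almost line for line: restrict to a window of length $\eta \asymp 1/\Psi(1/|x|)$ on which the survival probability in $B(0,|x|/2)$ is at least $1/2$, obtain the quantity $(1-e^{-V^{*}(x)\eta})/(2V^{*}(x))$, and finish by a two-case split. The only cosmetic difference is that the paper quotes the survival bound $\pr^0(\tau_{B(0,r)}\le t)\le c\,t\,\Psi(1/r)$ directly from \cite[eq.~(3.2)]{bib:Pru} combined with \eqref{eq:PruitH}, whereas you derive it via Dynkin's formula and the cut-off estimate \eqref{def:C9}; both routes give the same inequality.

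Your final paragraph about passing from $V^{*}$ to $V_{*}$ is unnecessary: the paper's own proof also terminates with $V^{*}(x)$ in the denominator, so the $V_{*}$ in the displayed statement is evidently a typo. Your proposed fix would not work in any case: $\Lambda_{B(x,|x|/2)}(x)$ is defined with the \emph{constant} $V^{*}(x)$ in the exponent, not with $V(X_s)$, so the observation that paths stay in the annulus $\{|x|/2\le |y|\le 3|x|/2\}$ is irrelevant to that expectation; and the claimed comparability of the annulus supremum of $V$ with $V_{*}(x)$ is not a consequence of Assumption (A4), which imposes no regularity on $V$ beyond non-negativity and decay.
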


\begin{proof}
First notice that for every $\eta>0$,
$$
\Lambda_{B(x,|x|/2)}(x) \geq \ex^0 \left[\int_0^{\eta} e^{-V^{*}(x) t} dt; \tau_{B(0,|x|/2)} > \eta \right], \quad |x| \geq 2r_0.
$$
Thus
$$
\Lambda_{B(x,|x|/2)}(x) \geq \frac{1-e^{-V^{*}(x) \eta}}{V^{*}(x)} \big(1-\pr^0(\tau_{B(0,|x|/2)} \leq \eta)\big), \quad |x| \geq 2r_0.
$$
Moreover, by \cite[eq. (3.2)]{bib:Pru} combined with \eqref{eq:PruitH}, there exists $c_1 >0$ such that for every $r, \eta >0$
$$
\pr^0(\tau_{B(0,r)} \leq \eta) \leq c_1 \eta \Psi\left(\frac{1}{r}\right),
$$
which gives
$$
1-\pr^0(\tau_{B(0,|x|/2)} \leq \eta) \geq 1 - c_1 \eta \Psi\left(\frac{1}{|x|}\right), \quad |x| \geq 2r_0.
$$
Since the constant $c_1$ is uniform in $\eta > 0$, we may take $\eta:= \frac{1}{2c_1 \Psi\left(\frac{1}{|x|}\right)}$,
which implies that
$$
\Lambda_{B(x,|x|/2)}(x) \geq \frac{1-\exp\left(-\frac{V^{*}(x)}{2c_1 \Psi\left(\frac{1}{|x|}\right)}\right)}{2V^{*}(x)}, \quad |x| \geq 2r_0.
$$
To conclude, it suffices to observe that when $V^{*}(x) \geq \Psi\left(\frac{1}{|x|}\right)$, we have
$$
\Lambda_{B(x,|x|/2)}(x) \geq \frac{1-e^{-\frac{1}{2c_1}}}{2} \frac{1}{V^{*}(x)}, \quad |x| \geq 2r_0,
$$
and when $V^{*}(x) \leq \Psi\left(\frac{1}{|x|}\right)$,
$$
\Lambda_{B(x,|x|/2)}(x) \geq  \frac{1}{2c_2}\frac{1-\exp\left(-\frac{V^{*}(x)}{2c_1 \Psi\left(\frac{1}{|x|}\right)}\right)}
{\frac{V^{*}(x)}{2c_1 \Psi\left(\frac{1}{|x|}\right)}} \frac{1}{\Psi\left(\frac{1}{|x|}\right)}
\geq \frac{e^{\frac{1}{2c_1}}}{2c_2} \frac{1}{\Psi\left(\frac{1}{|x|}\right)}, \quad |x| \geq 2r_0,
$$
as required.
\end{proof}

With this lemma we also have the following estimate.

\begin{lemma} \label{lem:low_aux}
Let $(X_t)_{t \geq 0}$ be a L\'evy process with L\'evy-Khintchin exponent $\psi$ as in \eqref{eq:Lchexp} such that
Assumptions (A1)-(A3) hold and let $V$ be a potential satisfying (A4); specifically let (A4) hold with some $r_0 >0$.
Then for every positive solution $\varphi$ of \eqref{eveq} there exists $C>0$ such that
$$
\varphi(x) \geq \frac{C}{V_{*}(x) \vee \Psi\big(\frac{1}{|x|}\big)} \,
\int_{|z| < |x| \atop |z+x| < |z-x|} \varphi(z) \nu(x-z) dz dy, \quad |x| > 2r_0.
$$
In particular,
$$
\varphi(x) \geq \frac{C}{C_5^2 C_6}\left( \int_{B(0,r_0)} \varphi(z) dz \right)
\frac{\nu(x)}{V_{*}(x) \vee \Psi\big(\frac{1}{|x|}\big)}, \quad |x| > 2r_0.
$$
\end{lemma}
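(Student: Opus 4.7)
The plan is to combine the Feynman-Kac representation of $\varphi$ on the ball $D := B(x,|x|/2)$ with the Ikeda-Watanabe formula \eqref{eq:IWF}, restrict the resulting double integral to an opposite-side region where $\nu(z-y)$ admits a doubling lower bound in terms of $\nu(z-x)$, and invoke Lemma \ref{lem:meanexit} to bound the remaining Green-type factor from below. Throughout I fix $|x| > 2r_0$, so that $D \subset \{|y| \geq |x|/2 > r_0\}$ and hence $V \geq 0$ on $D$.

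Applying the resolvent formula \eqref{eq:eig1} to $\varphi$ on $D$ and letting $\theta \downarrow 0$ yields the regular $(X,V)$-harmonic representation
$$
\varphi(x) = \ex^x\!\left[e^{-\int_0^{\tau_D} V(X_s)\,ds}\,\varphi(X_{\tau_D})\right];
$$
the resolvent term is controlled by $\theta\|\varphi\|_\infty \ex^x[\tau_D]$ and vanishes thanks to \eqref{eq:gen_est_tau}, while the boundary term converges by dominated convergence. Since $V(X_s) \leq V^{*}(x)$ whenever $X_s \in D$ and $\varphi\geq 0$, this gives
$$
\varphi(x) \geq \ex^x\!\left[e^{-V^{*}(x)\tau_D}\,\varphi(X_{\tau_D})\right],
$$
and \eqref{eq:IWF} with constant killing rate $\eta = V^{*}(x)$ rewrites the right-hand side as
$$
\int_D G^{V^{*}(x)}_D(x,y) \int_{D^c} \varphi(z)\,\nu(z-y)\,dz\,dy, \qquad G^{V^{*}(x)}_D(x,y) := \int_0^\infty e^{-V^{*}(x)t} p_D(t,x,y)\,dt.
$$

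I then restrict the outer integration to $A := \{z : |z| < |x|,\ |z+x| < |z-x|\}$; the identity $|z-x|^2 + |z+x|^2 = 2(|z|^2+|x|^2)$ forces $|z-x| > |x| > |x|/2$ on $A$, so $A \subset D^c$. For $y \in D$ and $z \in A$, $|y-x|\leq |x|/2 < |z-x|$, whence $|z-y| \leq 2|z-x|$, and Assumption (A1) (through $\nu \asymp C_5 g$ plus the doubling of $g$) yields $\nu(z-y) \geq (C_5^2 C_6)^{-1}\nu(z-x)$. Pulling the $y$-integral out and identifying
$$
\int_D G^{V^{*}(x)}_D(x,y)\,dy = \ex^x\!\left[\int_0^{\tau_D} e^{-V^{*}(x)t}\,dt\right] = \Lambda_{B(x,|x|/2)}(x),
$$
the lower bound supplied by Lemma \ref{lem:meanexit} delivers the first displayed inequality. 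For the ``in particular'' assertion, the half-ball $B(0,r_0) \cap \{z\cdot x < 0\}$ lies in $A$, and for $z$ in this set $|x-z| \in [|x|/2, 2|x|]$, so (A1) again gives $\nu(x-z) \geq (C_5^2 C_6)^{-1}\nu(x)$. Extracting this factor produces the stated bound, with the factor $1/2$ coming from restricting to a half of $B(0,r_0)$ absorbed into the overall constant.

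The principal technical point is the passage from \eqref{eq:eig1} to the clean $(X,V)$-harmonic identity on arbitrarily large exterior balls, which relies on the decay of $V$ and finiteness of $\ex^x[\tau_D]$. Everything after this reduction is a short doubling calculation for $\nu$ combined with an appeal to Lemma \ref{lem:meanexit}; no further nontrivial probabilistic input is needed.
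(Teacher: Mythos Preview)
Your proof is correct and follows essentially the same route as the paper: apply \eqref{eq:eig1} on $B(x,|x|/2)$, let $\theta\downarrow 0$, bound $V$ by $V^{*}(x)$, rewrite via Ikeda--Watanabe, restrict to the half-space region $\{|z|<|x|,\ |z+x|<|z-x|\}$, use the doubling of $\nu$ from (A1) to replace $\nu(z-y)$ by a multiple of $\nu(z-x)$, and finish with Lemma~\ref{lem:meanexit}. The only minor imprecision is your ``factor $1/2$'' remark for the second inequality: the half-ball integral is not literally one half of $\int_{B(0,r_0)}\varphi$, but since the constant $C$ is allowed to depend on $\varphi$ (and $\varphi$ is continuous and strictly positive on $\overline{B(0,r_0)}$), this is harmless.
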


\begin{proof}
By applying the resolvent formula \eqref{eq:eig1} with any $\theta >0$ and $D=B(x,|x|/2)$, and letting $\theta \downarrow 0$,
we get
$$
\varphi(x) \geq \ex^x \left[e^{-V^{*}(x) \tau_{B(x,|x|/2)}} \varphi(X_{\tau_{B(x,|x|/2)}})\right], \quad |x| > 2r_0.
$$
Then by the Ikeda-Watanabe formula \eqref{eq:IWF} and (A1),
\begin{align*}
\ex^x \left[e^{-V^{*}(x) \tau_{B(x,|x|/2)}}\right. & \left.\varphi(X_{\tau_{B(x,|x|/2)}})\right] \\
& \geq \int\limits_{B(x,|x|/2)} \int\limits_0^{\infty} e^{-V^{*}(x) t} p_{B(x,|x|/2)}(t,x,y) dt
\int_{|z| < |x| \atop |z+x| < |z-x|} \varphi(z) \nu(y-z) dz dy \\
& \geq c_2 \, \Lambda_{B(x,|x|/2)}(x) \, \int_{|z| < |x| \atop |z+x| < |z-x|} \varphi(z) \nu(x-z) dz, \quad |x| > 2r_0.
\end{align*}
An application of Lemma \ref{lem:meanexit} completes the proof of the first inequality. The second estimate is a direct
consequence of the first.
\end{proof}

We are now in the position to state the main theorem of this subsection.

\begin{theorem}
\label{thm:lower_bound_pos}
Let $(X_t)_{t \geq 0}$ be a  L\'evy process with L\'evy-Khintchin exponent $\psi$ as in \eqref{eq:Lchexp} such that
Assumptions (A1)-(A3) hold and let $V$ be a potential satisfying (A4); specifically let (A4) hold with some $r_0 >0$.
Let $\varphi$ be a positive solution of \eqref{eveq}. Then the following hold.
\begin{itemize}
\item[(1)]
If $\lim_{|x| \to \infty} \frac{\Psi\left(\frac{1}{|x|}\right)}{V^{*}(x)} = 0$ and $\int_{|x| > 2r_0}
\frac{\nu(x)}{V^{*}(x)} dx < \infty$, then there exist $C>0$ and $R\geq 2r_0$ such that
$$
\varphi(x) \geq C \frac{\nu(x)}{V^{*}(x)}, \quad |x| > R.
$$
\item[(2)]
If $\lim_{|x| \to \infty} \frac{\Psi\left(\frac{1}{|x|}\right)}{V^{*}(x)} = 0$ and $\int_{|x| > 2r_0}
\frac{\nu(x)}{V^{*}(x)} dx = \infty$, then there exist $\eta^{*}, C >0$ and $R \geq 2r_0$ such that
$$
\varphi(x) \geq C \frac{\nu(x)}{V^{*}(x)} \exp\left(\eta^{*}\int_{2r_0 \leq |y| \leq |x|}
\frac{\nu(y)}{V^{*}(y)} dy\right), \quad |x| > R.
$$
\item[(3)]
If $\liminf_{|x| \to \infty} \frac{\Psi\left(\frac{1}{|x|}\right)}{V^{*}(x)} > 0$ and $\int_{|x| > 2r_0}
\frac{\nu(x)}{\Psi\left(\frac{1}{|x|}\right)} dx < \infty$, then there exist $C>0$ and $R\geq 2r_0$ such that
$$
\varphi(x) \geq C \frac{\nu(x)}{\Psi\left(\frac{1}{|x|}\right)}, \quad |x| > R.
$$
\item[(4)]
If $\liminf_{|x| \to \infty} \frac{\Psi\left(\frac{1}{|x|}\right)}{V^{*}(x)} > 0$ and $\int_{|x| > 2r_0}
\frac{\nu(x)}{\Psi\left(\frac{1}{|x|}\right)} dx = \infty$, then there exist $\eta^{*}, C > 0$ and $R \geq 2r_0$
such that
$$
\varphi(x) \geq C \frac{\nu(x)}{\Psi\left(\frac{1}{|x|}\right)} \exp\left(\eta^{*}\int_{2r_0 \leq |y| \leq |x|}
\frac{\nu(y)}{\Psi\left(\frac{1}{|y|}\right)} dy\right), \quad |x| > R.
$$
\end{itemize}
\end{theorem}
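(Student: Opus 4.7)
The plan is to combine the integral inequality of Lemma \ref{lem:low_aux} with the self-improving lower estimate of Lemma \ref{lem:lower_tech}. Lemma \ref{lem:low_aux}, whose proof proceeds via Lemma \ref{lem:meanexit} and in fact yields $V^*$ in the denominator (since it is $V^*$, not $V_*$, that enters the definition of $\Lambda_{B(x,|x|/2)}(x)$), gives for every $|x| > 2r_0$
\begin{equation*}
\varphi(x) \geq \frac{C}{V^*(x) \vee \Psi(1/|x|)} \int_{|z| < |x|,\; |z+x| < |z-x|} \varphi(z)\,\nu(x-z)\,dz.
\end{equation*}
Using Assumption (A1) to replace $\nu(x-z)$ by $C_5^{-1} g(|x-z|)$, this matches the hypothesis \eqref{eq:1st_iteration_low} of Lemma \ref{lem:lower_tech} with
\begin{equation*}
u(r) := g(r), \qquad v(r) := V^*(r) \vee \Psi(1/r).
\end{equation*}
Both functions are non-increasing and positive on $[R_0, \infty)$ for any $R_0 \geq 2r_0 \vee 1$, and $u$ satisfies the doubling property \eqref{eq:doubl_prop} by (A1). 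The positivity of the seed constant $\eta$ in Lemma \ref{lem:lower_tech} follows because $\varphi$ is continuous (as recorded after Lemma \ref{lm:semprop}) and strictly positive by hypothesis, so the integral entering the definition of $\eta$ is uniformly bounded away from zero over the unit sphere.

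The four cases of the theorem correspond to two independent dichotomies: which of $V^*$ or $\Psi(1/\cdot)$ dominates at infinity, and whether the corresponding $K_{u,v}$ is integrable. For cases (1) and (2), the hypothesis $\Psi(1/|x|)/V^*(x) \to 0$ yields $v(r) \asymp V^*(r)$ for $r$ large enough, so $K_{u,v}(r) \asymp \nu(r)/V^*(r)$ and
\begin{equation*}
h_{u,v}(r) \asymp \int_{R_0 \leq |y| \leq r} \frac{\nu(y)}{V^*(y)}\,dy.
\end{equation*}
In case (1), convergence of this integral together with \eqref{eq:1st_iteration_h_2} gives directly $\varphi(x) \geq C\,\nu(x)/V^*(x)$; in case (2), divergence and the full bound \eqref{eq:1st_iteration_h_low} supply the exponential factor with $\eta^*$ ultimately of the form $C_{15}/C_{10}$ from Lemma \ref{lem:lower_tech}. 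Cases (3) and (4) are treated identically after exchanging $V^*(r)$ for $\Psi(1/r)$: the assumption $\liminf_{|x|\to\infty} \Psi(1/|x|)/V^*(x) > 0$ forces $v(r) \asymp \Psi(1/r)$, and the same dichotomy on the integrability of $\nu/\Psi(1/|\cdot|)$ then selects between \eqref{eq:1st_iteration_h_2} and \eqref{eq:1st_iteration_h_low}.

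The main technical point is the asymptotic collapse of $V^*(r) \vee \Psi(1/r)$ into the single factor named in each conclusion, which is precisely what the two hypotheses in each case guarantee. A minor bookkeeping task is to track the universal constant $\eta^*$ appearing in the exponential bounds of cases (2) and (4) back through the constants in Lemmas \ref{lem:low_aux}, \ref{lem:meanexit} and \ref{lm:bhi}. Beyond this, no fresh probabilistic input is needed: the remaining work is the purely analytic iteration already encoded in Lemma \ref{lem:lower_tech}.
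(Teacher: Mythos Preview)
Your proposal is correct and follows exactly the paper's approach: feed the integral inequality of Lemma \ref{lem:low_aux} (with $V^*$ in the denominator, as you rightly note) into the self-improving lower bound of Lemma \ref{lem:lower_tech}, splitting the four cases according to which of $V^*$ or $\Psi(1/\cdot)$ dominates at infinity and whether the resulting $K_{u,v}$ is integrable. The one bookkeeping step you leave implicit, and which the paper spells out, is that Lemma \ref{lem:lower_tech} requires the inequality \eqref{eq:1st_iteration_low} for all $|x|>1$, whereas Lemma \ref{lem:low_aux} only delivers it for $|x|>2r_0$; the extension to the compact shell $1<|x|\le 2r_0$ (and, after choosing $R$ so that $v$ collapses to a single term, to $2r_0<|x|\le R$) follows because $\varphi$ is continuous and strictly positive there while the right-hand integral is bounded above, so the inequality holds after shrinking the constant $C_{15}$.
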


\begin{proof}
As noted before, $\varphi \in C_{\rm b}(\R^d)$. First we prove (1)-(2). Let
$$
u(|x|) := \frac{1}{C_5} g(|x|) \ \ \text{and} \ \ v(|x|):= \1_{\left\{1 \leq |x| \leq 2r_0 \right\}}V^{*}(2r_0)
+ \1_{\left\{1 \leq |x| \leq 2r_0 \right\}} V^{*}(|x|), \ \ \ |x| \geq 1.
$$
By our assumptions and Lemma \ref{lem:low_aux}, there exist $c_1>0$ and $R \geq 2r_0$ such that
$$
\varphi(x) \geq c_1 \frac{1}{v(x)} \, \int_{|z| < |x| \atop |z+x| < |z-x|} \varphi(z) u(|x-z|) dz, \quad |x| > R.
$$
Since $0< \varphi \in C_{\rm b}(\R^d)$ and $\int_{|z| < |x| \atop |z+x| < |z-x|} \varphi(z) u(|x-z|) dz \leq c_3
\left\|\varphi\right\|_{\infty}$, for $2r_0  \leq |x| \leq R$, the same inequality holds also for this range of $|x|$.
Thus the assumptions of Lemma \ref{lem:lower_tech} are satisfied with the functions $u$ and $v$. Hence there exist
constants $c_3 >0$ and $\eta^{*} >0$ for which the estimate
$$
\varphi(x) \geq c_3 \frac{\nu(x)}{V^{*}(x)} \exp\left(\eta^{*}\int_{2r_0 \leq |y| \leq |x|} \frac{\nu(y)}{V^{*}(y)}
dy\right), \quad |x| > R,
$$
holds. This implies (1) and (2). The proof of (3) and (4) follows by the same argument as above with the same $u(|x|)$
and $v(|x|):= \frac{1}{C_1} \Psi\big(\frac{1}{|x|}\big)$, $|x| \geq 1$.
\end{proof}

\section{Decay of zero-energy eigenfunctions for potentials negative at infinity}

\noindent
Now we turn to discussing the spatial decay properties of eigenfunctions of non-local Schr\"odinger operators with
\emph{decaying potentials that are negative at infinity}.

\medskip

\begin{itemize}
\item[\textbf{(A5)}]
Let $V \in L^{\infty}(\R^d)$ be such that there exists $r_0 >0$ and $C > 0$ such that
$$
0 \leq -V(x) \leq C \Psi(1/|x|), \quad |x| \geq r_0.
$$
\end{itemize}

\bigskip
\noindent
Notice that under \textbf{(A5)} we have $V(x) \to 0$ as $|x| \to \infty$, i.e., $V$ is indeed a decaying potential.
It also covers potentials with compact support such as potential wells.

We will now prove a counterpart of Theorem \ref{thm:upper_bound_pos_antisymm} (3) in the case when the potential is
negative at infinity and the negative nodal domain of $\varphi$ is a subset of a given half-space. By rotating the
coordinate system if necessary, without loss of generality we can assume that
\smallskip
\begin{align} \label{eq:nodal_neg}
  \begin{array}{c}
	 \mbox{there exists $l \in \R$ such that $\supp \varphi_{-} \subset \left\{y \in \R^d: y_1 < l \right\}$.}
  \end{array}
\end{align}
%\smallskip

\begin{theorem}
\label{thm:upper_bound_neg}
Let $(X_t)_{t \geq 0}$ be a  L\'evy process with L\'evy-Khintchin exponent $\psi$ as in \eqref{eq:Lchexp} such that
Assumptions (A1)-(A3) and (A5) hold; specifically, let (A5) hold with some $r_0>0$. Moreover, suppose that for every
$\varepsilon \in (0,1)$ there exists $M \geq 1$ such that
\begin{align}
\label{eq:Psi_est}
\Psi(r) \leq \varepsilon \Psi(M r), \quad r \in (0,1].
\end{align}
If $\varphi$ is a solution of \eqref{eveq} such that $\varphi \in L^p(\R^d)$, for some $p >1$, and
\eqref{eq:nodal_neg} holds, then for every $\varepsilon \in (0,1)$ there exist $C >0$ and $R > 3r_0$ such that
$$
\varphi(x) \leq C \left(\left\|\varphi\right\|_p \vee \left\|\varphi\right\|_{\infty} \right)
\left(\frac{\left(\int_{|y| >|x|} \nu(y)^{\frac{p}{p-(1+\varepsilon)}} dy\right)^{\frac{p-(1+\varepsilon)}{p}}}
{\Psi(1/|x|)} + \frac{1}{|x|^\frac{d}{p}}\right)^{1-\varepsilon}, \quad x_1 \geq R.
$$
Moreover, if $\varphi(l+x_1,x_2,..., x_d)= -\varphi(l-x_1,x_2,..., x_d)$, $x \in \R^d$, with $l$ given by
\eqref{eq:nodal_neg}, then there exists $\widetilde R > 3r_0 \vee |l|$ such that the same upper bound is true
for $\varphi(x)$ replaced by $|\varphi(x)|$, whenever $|x_1| \geq \widetilde R$.
\end{theorem}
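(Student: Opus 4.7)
The strategy is to combine the Feynman–Kac boundary representation of $\varphi$ with a Hölder splitting that treats the negative potential perturbatively via Khasminskii's lemma, and then to apply the $V\equiv 0$ instance of the harmonic estimate from Lemma \ref{lm:bhi}. Fix $\varepsilon\in(0,1)$, and choose $M\geq 1$ large, depending on $\varepsilon$, through the scaling condition \eqref{eq:Psi_est}. I would work throughout on the ball $B:=B(x,|x|/M)$ with $|x|$ large.

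First, applying the resolvent formula \eqref{eq:eig1} with $D=B$ and letting $\theta\downarrow 0$ yields $\varphi(x)=\ex^x[e^{-\int_0^{\tau_B}V(X_s)ds}\varphi(X_{\tau_B})]$. The support assumption \eqref{eq:nodal_neg} together with $x_1\geq R$ large ensures $\varphi\geq 0$ on $B$, and dropping the negative part of $\varphi(X_{\tau_B})$ gives $\varphi(x)\leq\ex^x[e^{-\int_0^{\tau_B}V\,ds}\varphi_+(X_{\tau_B})]$. I would then apply Hölder's inequality with conjugate exponents $q_1,q_2$, picking $q_2=1+\varepsilon$ (so that the $\nu$-exponent $p/(p-q_2)=p/(p-(1+\varepsilon))$ emerges, forcing the implicit constraint $\varepsilon<p-1$):
$$
\varphi(x)\ \leq\ \big(\ex^x[e^{-q_1\int_0^{\tau_B}V\,ds}]\big)^{1/q_1}\big(\ex^x[\varphi_+(X_{\tau_B})^{q_2}]\big)^{1/q_2}.
$$

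For the first factor, I would use $|V(y)|\leq C\Psi(1/|x|)$ on $B$ (from (A5)), the bound \eqref{eq:gen_est_tau} giving $\ex^y[\tau_B]\leq C/\Psi(M/|x|)$, and \eqref{eq:Psi_est} to ensure $q_1 C\Psi(1/|x|)/\Psi(M/|x|)\leq 1/2$ by picking $M$ large enough. Khasminskii's lemma (as used in \eqref{eq:khas}) then bounds the first factor by a constant $C(\varepsilon)$ uniformly in $|x|$. For the second factor, the map $y\mapsto\ex^y[\varphi_+(X_{\tau_B})^{q_2}]$ is regular $X$-harmonic in $B$ (with the zero potential), so the $V\equiv 0$ instance of Corollary \ref{cor:harm_est}(2) (equivalently Lemma \ref{lm:bhi} with $V_*=0$) yields
$$
\ex^x[\varphi_+(X_{\tau_B})^{q_2}]\leq \frac{C}{\Psi(1/|x|)}\int_{|z-x|>2|x|/M}\!\!\varphi_+^{q_2}(z)\nu(z-x)dz+\frac{C}{|x|^d}\int_{|x|/(8M)<|z-x|\leq 2|x|/M}\!\!\varphi_+^{q_2}(z)dz.
$$
An inner Hölder with exponents $p/q_2$ and $p/(p-q_2)$ turns the far-field integral into $\|\varphi\|_p^{q_2}(\int_{|z|>|x|/2}\nu^{p/(p-(1+\varepsilon))})^{(p-(1+\varepsilon))/p}$ (after shifting the argument and using (A1)), while a Hölder–Jensen bound controls the annulus integral by $\|\varphi\|_p^{q_2}|x|^{d(p-q_2)/p}$. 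Taking the $1/q_2$ power and invoking $(a+b)^\alpha\leq a^\alpha+b^\alpha$ for $\alpha\in(0,1)$ together with the observation that, for $|x|$ large, the expression in brackets is $<1$ and hence $[\cdot]^{1/(1+\varepsilon)}\leq[\cdot]^{1-\varepsilon}$, produces the claimed bound. The appearance of $\|\varphi\|_p\vee\|\varphi\|_\infty$ comes from absorbing, via $\varphi_+^{q_2}\leq\|\varphi\|_\infty^{q_2-s}\varphi_+^{s}$, the small mismatch between the exponent $1/q_2=1/(1+\varepsilon)$ produced by the splitting and the $1-\varepsilon$ advertised in the statement.

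For the antisymmetric statement, from $\varphi(l+x_1,x_2,\ldots,x_d)=-\varphi(l-x_1,x_2,\ldots,x_d)$ it follows that reflecting across $\{y_1=l\}$ preserves $|\varphi|$ and maps the half-space $\{x_1\leq -\widetilde R\}$ into $\{x_1\geq 2l+\widetilde R\}$. Choosing $\widetilde R>3r_0\vee|l|$ large enough that $2l+\widetilde R\geq R$, the first half of the theorem applies at the reflected point, and since $|x|$ is comparable to the norm of its reflection for $|x_1|\geq\widetilde R$ the same bound transfers to $|\varphi(x)|$ on $\{x_1\leq -\widetilde R\}$. The main obstacle I anticipate is the simultaneous matching of the advertised powers $p/(p-(1+\varepsilon))$ and $1-\varepsilon$, which requires careful bookkeeping of the Hölder exponents (and a re-labelling $\varepsilon\mapsto\varepsilon'$ if convenient), together with verifying that the Khasminskii bound is uniform in $|x|$; the latter rests precisely on \eqref{eq:Psi_est} making $\Psi(1/|x|)/\Psi(M/|x|)$ as small as desired by enlarging $M$.
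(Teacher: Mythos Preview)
Your overall architecture matches the paper's: the resolvent/Feynman--Kac representation on $B=B(x,\eta|x|)$, the H\"older split with exponents $(q_1,q_2)=((1+\varepsilon)/\varepsilon,\,1+\varepsilon)$, Khasminskii's lemma driven by \eqref{eq:Psi_est} to kill the exponential factor, and then the $V\equiv 0$ case of the harmonic estimate from Lemma~\ref{lm:bhi}/Corollary~\ref{cor:harm_est}(2) applied to the $X$-harmonic function $f_x(y)=\ex^y[\varphi_+^{\,1+\varepsilon}(X_{\tau_B})]$. The far-field integral and the final exponent bookkeeping are as you describe.

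The gap is in the annulus term. The harmonic bound from Corollary~\ref{cor:harm_est}(2) produces
\[
\frac{C}{|x|^d}\int_{\frac{\eta|x|}{8}<|z-x|\le 2\eta|x|} f_x(z)\,dz,
\]
and on the inner shell $\frac{\eta|x|}{8}<|z-x|\le \eta|x|$ the integrand is \emph{not} $\varphi_+^{\,1+\varepsilon}(z)$ but the harmonic extension $\ex^z[\varphi_+^{\,1+\varepsilon}(X_{\tau_B})]$. Your ``H\"older--Jensen bound controls the annulus integral by $\|\varphi\|_p^{q_2}|x|^{d(p-q_2)/p}$'' tacitly replaces $f_x(z)$ by $\varphi_+^{\,1+\varepsilon}(z)$ there, which is not justified; and the crude bound $f_x(z)\le\|\varphi\|_\infty^{1+\varepsilon}$ gives no decay. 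This is exactly where the paper does real work: it uses $\varphi_+^{\,1+\varepsilon}\le\|\varphi\|_\infty^{\varepsilon}\varphi_+$ together with the identity coming from the eigenvalue equation,
\[
\ex^z\big[\varphi_+(X_{\tau_B})\big]\ \le\ \varphi(z)\ +\ \ex^z\Big[e^{\int_0^{\tau_B}|V|\,ds}\,\varphi_-(X_{\tau_B})\Big],
\]
so that the harmonic extension is converted back into $\varphi(z)$ plus a $\varphi_-$--correction. That correction is then estimated by Ikeda--Watanabe \eqref{eq:IWF} and the nodal hypothesis \eqref{eq:nodal_neg}: since $\supp\varphi_-\subset\{w_1<l\}$ lies at distance $\asymp|x|$ from $B$, one gets $\ex^z[\varphi_-^{\,p}(X_{\tau_B})]\le C\|\varphi\|_p^p\,\nu(x)/\Psi(1/|x|)\le C'\|\varphi\|_p^p$. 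This is the essential use of \eqref{eq:nodal_neg}; your proposal invokes the nodal condition only to say $\varphi\ge 0$ on $B$, which is not where the content lies (dropping $\varphi_-(X_{\tau_B})$ in the very first inequality is free regardless of signs on $B$). Without this step you cannot close the estimate for $I_2$, and this is also the true reason $\|\varphi\|_\infty$ enters the final bound, not merely an exponent mismatch.
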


%We see that in real cases of potentials that are negative at infinity (see the examples in the second class for $\nu \leq 1$) we always have that
%\begin{align} \label{pot1}
%\Psi(1/|x|) \asymp |V(x)| \ \text{for large $|x|$ \ \ \ \ \ \ \ or} \ \ \ \ \ \ \ \  \frac{\Psi(1/|x|)}{|V(x)|} \to \infty \ \ \text{as} \ \ |x| \to \infty.
%\end{align}
\begin{proof}
For $\eta \in (0,1/4]$ we denote $D_{\eta}:= B(x, \eta|x|)$, $x \in \R^d$. First note that there exists $\eta \in
(0,1/4]$ such that
\begin{align}
\label{eq:from_Khas}
\sup_{|x| > \frac{r_0 \vee 1}{1-\eta}} \ex^x\left[e^{\int_0^{\tau_{D_{\eta}}} |V(X_t)|  dt} \right] < \infty.
\end{align}
Indeed, by \eqref{eq:gen_est_tau} and Assumption (A5), there is $c>0$ such that
$$
\ex^x\left[\int_0^{\tau_{D_{\eta}}} |V(X_t)|  dt \right] \leq \sup_{y \in D_{\eta}} |V(y)| \, \ex^x [\tau_{D_{\eta}}]
\leq c \frac{\Psi(1/((1-\eta)|x|))}{\Psi(1/(\eta|x|))}, \qquad |x| > \frac{r_0}{1-\eta},
$$
and, by using \eqref{eq:Psi_est}, we can derive from the above that
$$
\sup_{|x| > \frac{r_0 \vee 1}{1-\eta}}\ex^x\left[\int_0^{\tau_{D_{\eta}}} |V(X_t)|  dt \right] < 1,
$$
for sufficiently small $\eta$. Similarly, by the fact that $V \in L^{\infty}(\R^d)$, cf. (A5), and using
\eqref{eq:gen_est_tau},
$$
\sup_{|x| \leq \frac{r_0 \vee 1}{1-\eta}}\ex^x\left[\int_0^{\tau_{D_{\eta}}} |V(X_t)|  dt \right] < 1
$$
for $\eta$ small enough. Hence, by Khasminskii's Lemma we see that there exists $\eta \in (0,\frac{1}{4}]$ such
that \eqref{eq:from_Khas} holds. Next, by applying \eqref{eq:eig1} for $\theta > 0$ we have
$$
\varphi(x) = \theta \ex^x\left[\int_0^{\tau_{D_{\eta}}} e^{\int_0^t \left(|V(X_s)| - \theta \right) ds}
\varphi(X_{t}) dt \right] + \ex^x\left[e^{\int_0^{\tau_{D_{\eta}}} \left(|V(X_s)| - \theta\right) ds}
\varphi(X_{\tau_{D_{\eta}}}) \right], \quad |x| > \frac{r_0 \vee 1}{1-\eta}.
$$
Letting $\theta \to \infty$ this gives
$$
\varphi(x) = \ex^x\left[e^{\int_0^{\tau_{D_{\eta}}} |V(X_s)| ds} \varphi_{+}(X_{\tau_{D_{\eta}}}) \right] -
\ex^x\left[e^{\int_0^{\tau_{D_{\eta}}} |V(X_s)| ds} \varphi_{-}(X_{\tau_{D_{\eta}}}) \right],
\quad |x| > \frac{r_0}{1-\eta},
$$
with $\eta$ specified above, where $\varphi_{\pm}$ denotes the positive and negative parts of $\varphi$,
respectively. (We note that passing to the limit is possible due to \eqref{eq:from_Khas} and dominated
convergence.) In particular,
\begin{align}
\label{pot2}
\varphi(x) \leq \ex^x\left[e^{\int_0^{\tau_{D_{\eta}}} |V(X_s)| ds} \varphi_{+}(X_{\tau_{D_{\eta}}}) \right] ,
\quad |x| > \frac{r_0 \vee 1}{1-\eta},
\end{align}
and
\begin{align}
\label{pot3}
\ex^x\left[ \varphi_{+}(X_{\tau_{D_{\eta}}}) \right] \leq \varphi(x) +
\ex^x\left[e^{\int_0^{\tau_{D_{\eta}}} |V(X_s)| ds} \varphi_{-}(X_{\tau_{D_{\eta}}}) \right],
\quad |x| > \frac{r_0 \vee 1}{1-\eta}.
\end{align}
Furthermore, by H\"older inequality with $\widetilde p, \widetilde q > 1$ such that $1/\widetilde p+1/\widetilde q
=1$, we have
\begin{align}
\label{pot4}
\ex^x\left[e^{\int_0^{\tau_{D_{\eta}}} |V(X_s)| ds} \varphi_{\pm}(X_{\tau_{D_{\eta}}}) \right] \leq
\left(\ex^x\left[e^{\widetilde q \int_0^{\tau_{D_{\eta}}} |V(X_s)| ds}\right]\right)^{1/\widetilde q} \
\left(\ex^x\left[\varphi_{\pm}^{\widetilde p}(X_{\tau_{D_{\eta}}})\right]\right)^{1/\widetilde p},
\quad |x| > \frac{r_0 \vee 1}{1-\eta}.
\end{align}
Again, by Khasminskii's Lemma and by decreasing $\eta$ (dependent on $\widetilde q$) if necessary, we
obtain that
$$
C_{\eta,\widetilde q}:= \sup_{x \in \R^d} \left(\ex^x\left[e^{\widetilde q
\int_0^{\tau_{D_{\eta}}} |V(X_s)| ds}\right]\right)^{1/\widetilde q} < \infty.
$$
Therefore, by \eqref{pot2} and \eqref{pot4}, it follows that
\begin{align}
\label{pot5}
\varphi(x) \leq C_{\eta,\widetilde q}
\left(\ex^x\left[\varphi^{\widetilde p}_{+}(X_{\tau_{D_{\eta}}})\right]\right)^{1/\widetilde p},
\quad |x| > \frac{r_0 \vee 1}{1-\eta},
\end{align}
and
\begin{align} \label{pot6}
\ex^x\left[e^{\int_0^{\tau_{D_{\eta}}} |V(X_s)| ds} \varphi_{-}(X_{\tau_{D_{\eta}}}) \right]
\leq
C_{\eta,\widetilde q}
\left(\ex^x\left[\varphi^{\widetilde p}_{-}(X_{\tau_{D_{\eta}}})\right]\right)^{1/\widetilde p},
\quad |x| > \frac{r_0 \vee 1}{1-\eta}.
\end{align}

Fix now an arbitrarily small $\varepsilon > 0$ and choose $\eta = \eta(\varepsilon) \in (0,\frac{1}{4}]$
small enough such that \eqref{pot5} holds with $\widetilde p= 1+ \varepsilon$ and $C_{\eta, \widetilde q}
< \infty$, where $\widetilde q = 1+ 1/\varepsilon$. In particular,
\begin{align}
\label{pot7}
\varphi(x)^{1+\varepsilon} \leq C_{\eta,1+1/\varepsilon} \,
\ex^x\left[\varphi^{1+\varepsilon}_{+}(X_{\tau_{D_{\eta}}})\right], \quad |x| > \frac{r_0 \vee 1}{1-\eta}.
\end{align}
We define $f_x(y) = \varphi^{1+\varepsilon}_{+}(y)$ for $y \in D^c_{\eta}$, and $f_x(y) =
\ex^y\left[\varphi^{1+\varepsilon}_{+}(X_{\tau_{D_{\eta}}})\right]$ for $y \in D_{\eta}$. Clearly, $f_x$
is $X$-harmonic in $D_{\eta}$. Recall that both $D_{\eta} = B(x,\eta|x|)$ and $f_x$ depend on the position
$x$. By Corollary \ref{cor:harm_est} (2), there exists $c_1=c_1(\eta)$ such that
$$
f_x(x) \leq \frac{c_1}{\Psi(1/|x|)} \int_{|y-x| > 2\eta|x|} f_x(y) \nu(x-y) dy +
\frac{c_1}{|x|^d} \int_{\frac{\eta}{8}|x| < |y-x| < 2\eta|x|} f_x(y) dy, \quad |x| \geq 2(r_0 \vee 1).
$$
Denote the summands at the right hand side above by $I_1(x)$ and $I_2(x)$, respectively. Again, by applying
H\"older inequality with $\widetilde p = p/(1+\varepsilon)$ and $\widetilde q = p /(p-(1+\varepsilon))$, and
using Assumption (A1), it is seen that there exists $c_2= c_2(\varepsilon,p)>0$ such that
\begin{align*}
I_1(x)
& \leq  \frac{c_1}{\Psi(1/|x|)} \left(\int_{\R^d} \varphi^p_{+}(y) dy\right)^{\frac{1+\varepsilon}{p}}
\left(\int_{|y| > 2\eta|x|} \nu(y)^{\frac{p}{p-(1+\varepsilon)}} dy\right)^{\frac{p-(1+\varepsilon)}{p}} \\
& \leq c_2 \left\|\varphi\right\|^{1+\varepsilon}_p \frac{\left(\int_{|y| >|x|}
\nu(y)^{\frac{p}{p-(1+\varepsilon)}} dy\right)^{\frac{p-(1+\varepsilon)}{p}}}{\Psi(1/|x|)},
\qquad |x| \geq 2(r_0 \vee 1) \vee \frac{1}{2\eta}.
\end{align*}
Moreover, by the definition of $f_x$, \eqref{pot3}, and \eqref{pot6} applied with $\widetilde p = p$,
$\widetilde q =q$,
\begin{align*}
I_2(x)
& \leq
\frac{c_1}{|x|^d} \left(\int_{\eta|x| < |y-x| < 2\eta|x|} \left|\varphi(y)\right|^{1+\varepsilon} dy +
\int_{\frac{\eta}{8}|x| < |y-x|
\leq \eta|x|} \ex^y\left[\varphi^{1+\varepsilon}_{+}(X_{\tau_{D_{\eta}}})\right] dy\right)\\
& \leq
\frac{c_1}{|x|^d} \left(\int_{\eta|x| < |y-x| < 2\eta|x|} \left|\varphi(y)\right|^{1+\varepsilon} dy +
\left\|\varphi\right\|_{\infty}^{\varepsilon} \int_{\frac{\eta}{8}|x| < |y-x| \leq \eta|x|} \varphi(y) dy\right. \\
& \ \ \ \ \ \ \ \ \ \left. \ + C_{\eta,q} \, \left\|\varphi\right\|_{\infty}^{\varepsilon}
\int_{\frac{\eta}{8}|x| < |y-x| \leq \eta|x|} \left(\ex^y\left[\varphi^p_{-}(X_{\tau_{D_{\eta}}})\right]\right)^{1/p}
dy\right), \qquad |x| \geq 3(r_0 \vee 1),
\end{align*}
and by a further application of the H\"older inequality with $\widetilde p = p/(1+\varepsilon)$ and $\widetilde q =
p /(p-(1+\varepsilon))$ to the first integral, and with $p, q$ to the second and third, we get
$$
I_2(x)
\leq
\frac{c_3\left\|\varphi\right\|^{1+\varepsilon}_p}{|x|^\frac{(1+\varepsilon)d}{p}}  +
\frac{c_4 \left\|\varphi\right\|_{\infty}^{\varepsilon}}{|x|^\frac{d}{p}}
\left(\left\|\varphi\right\|_p + \left(\int_{|y-x| \leq \eta|x|}
\ex^y\left[\varphi_{-}^p(X_{\tau_{D_{\eta}}})\right] dy\right)^{1/p}\right) , \quad |x| \geq 3(r_0 \vee 1),
$$
with some $c_3 = c_3(\varepsilon, p)$ and $c_4 = c_4(\varepsilon, p)$. It suffices to estimate the latter integral.
By \eqref{eq:IWF}, Assumption (A1), \eqref{eq:nodal_neg} and \eqref{eq:gen_est_tau}, for every $y \in B(x,\eta|x|)$
and $x \in \R^d$ such that $x_1 > 3(r_0 \vee 1)$, we have
\begin{align*}
\ex^y\left[\varphi_{-}^p(X_{\tau_{D_{\eta}}})\right]
& =
c_5 \int_{D_{\eta}} G_{D_{\eta}}(y,z) \int_{D_{\eta}^c} \varphi_{-}^p(w) \nu(w-z)dwdz \\
& =
c_5 \int_{D_{\eta}} G_{D_{\eta}}(y,z) \int_{\left\{w: \, w_1 <l\right\}} \varphi_{-}^p(w) \nu(w-z)dwdz \\
& \leq c_6 \ex^y [\tau_{D_{\eta}}]  \, \left\|\varphi\right\|_p^p \, \nu(x) \\
& \leq c_7 \left\|\varphi\right\|_p^p \frac{\nu(x)}{\Psi\left(\frac{1}{\eta |x|}\right)}
\leq c_8 \left\|\varphi\right\|_p^p \frac{\nu(x)}{\Psi\left(\frac{1}{|x|}\right)},
\end{align*}
with some $c_5, ..., c_8$, possibly depending on $\varepsilon$ via $\eta$. Thus
$$
I_2(x)
\leq
\frac{c_3\left\|\varphi\right\|^{1+\varepsilon}_p}{|x|^\frac{(1+\varepsilon)d}{p}}  +
\frac{c_9 \left\|\varphi\right\|_{\infty}^{\varepsilon}\left\|\varphi\right\|_p}{|x|^\frac{d}{p}}
\left(1 + \left(\frac{\nu(x)|x|^d}{\Psi\left(\frac{1}{|x|}\right)}\right)^{1/p}\right),
\quad x_1 > 3(r_0 \vee 1),
$$
with $c_9=c_9(\varepsilon, p)$. Note also that under (A1) there exists $c_{10}>0$ such that $\nu(x) \leq c_{10}
\Psi\left(\frac{1}{|x|}\right)|x|^{-d}$, $|x| \geq 3(r_0 \vee 1)$. By putting all the above estimates together,
we see that there exists a constant $c_{11}=c_{11}(\varepsilon, p)$ such that
$$
\varphi(x) \leq c_{11} \left(\left\|\varphi\right\|_p \vee \left\|\varphi\right\|_{\infty} \right)
\left(\frac{\left(\int_{|y| >|x|} \nu(y)^{\frac{p}{p-(1+\varepsilon)}} dy\right)^
{\frac{p-(1+\varepsilon)}{p}}}{\Psi(1/|x|)} + \frac{1}{|x|^\frac{d}{p}}\right)^{1/(1+\varepsilon)},
$$
whenever $x_1 > 3(r_0 \vee 1) \vee \frac{1}{2\eta}$, which is the first claimed bound. The second
statement of the theorem follows from this by the antisymmetry argument.
\end{proof}

A further discussion of the potentials negative at infinity in some specific cases will be made at
the end of Section \ref{subsec:fractional} below.

\section{Specific cases and decay mechanisms} %non-existence of bound states}

\subsection{Isotropic and anisotropic fractional Schr\"odinger operators} \label{subsec:fractional}

Let $L^{(\alpha)}$, $\alpha \in (0,2)$, be a family of self-adjoint pseudo-differential operators
determined by their Fourier transforms
$$
\widehat{L^{(\alpha)} f}(\xi) = -\psi^{(\alpha)}(\xi) \widehat{f}(\xi), \quad \xi \in \R^d, \ \
f \in \Dom(L^{(\alpha)}) = \left\{g \in L^2(\R^d): \psi^{(\alpha)} \widehat{g} \in L^2(\R^d) \right\},
$$
where
\begin{align} \label{eq:phi_stable}
\psi^{(\alpha)}(\xi) = \int_{\R^d \setminus \left\{0\right\}} (1-\cos(\xi \cdot z))
\nu^{(\alpha)}(z)dz.
\end{align}
Here we take $\nu^{(\alpha)}(x) = g(x/|x|)|x|^{-d-\alpha}$, $d \geq 1$, where the function
$g: \mathbb S_d \rightarrow (0,\infty)$, with the $d$-dimensional unit sphere $\mathbb S_d$ centered in
the origin, is such that $g(\theta) = g(-\theta)$ and $c_1 \leq g(\theta) \leq c_2$, for every $\theta
\in \mathbb S_d$, with finite positive constants $c_1, c_2$ (cf. \eqref{eq:Lchexp}). Clearly, every
$\nu^{(\alpha)}(z)dz$ is a symmetric L\'evy measure on $\R^d \setminus \left\{0\right\}$ such that
\begin{align}\label{eq:Lm_stable}
\nu^{(\alpha)}(x) \asymp |x|^{-d-\alpha}, \quad x \in \R^d \setminus \left\{0\right\}.
\end{align}
In particular, $\int_{\R^d \setminus \left\{0\right\}} \nu^{(\alpha)}(z)dz = \infty$ and Assumption (A1)
holds. Also, one can easily check that $\psi^{(\alpha)}(\xi) \asymp |\xi|^\alpha$. From this we can easily
see that the maximal function $\Psi$ of the symbol $\psi$ defined in \eqref{eq:Lchexpprof} satisfies
\begin{align} \label{eq:Psi_stable}
\Psi(r) \asymp r^{\alpha}, \quad r >0.
\end{align}
When the spherical density $g$ is non-trivial, the operator $L^{(\alpha)}$ is often called an
\emph{anisotropic fractional Laplacian of order $\alpha/2$}, and the corresponding stochastic process
generated by it is an anisotropic $\alpha$-stable L\'evy process. When $g \equiv C_{d,\alpha}$ for a
constant $C_{d,\alpha} >0$, the operator $L^{(\alpha)} = -(-\Delta)^{\alpha/2}$ is given by the usual
\emph{isotropic fractional Laplacian}, generating a rotationally symmetric L\'evy process.

Note that, by symmetry, for every $t>0$ we have
\begin{align} \label{eq:aux_pt1}
\sup_{x \in \R^d} p(t,x) = p(t,0) = \int_{\R^d} e^{-t \psi(\xi)} d\xi < c_3 t^{-d/\alpha}, \quad t>0,
\end{align}
and, as proven in \cite{GH},
\begin{align} \label{eq:aux_pt2}
p(t,x) \leq c_4 t |x|^{-d-\alpha}, \quad t >0, \ \ x \in \R^d \setminus \left\{0\right\}.
\end{align}
Then \eqref{eq:aux_pt1} gives (A2), and (A3) follows by a combination of \eqref{eq:aux_pt1}, \eqref{eq:aux_pt2}
and \cite[Lem. 2.2]{KL17}.

First we consider potentials that are positive at infinity in the sense of (A4) and look at positive solutions
of \eqref{eq:eign}.

\begin{theorem}
\label{thm:polynomial}
Let $L^{(\alpha)}$, $0 < \alpha < 2$, be a pseudo-differential operator determined by \eqref{eq:phi_stable}
and $V$ be an $X$-Kato class potential for which there exists $r_0 > 0$ such that $V(x) > 0$ and $V(x)
\asymp |x|^{-\beta}$, for $|x| \geq r_0$, with some $\beta>0$. Suppose that there exists a positive function
$\varphi \in C_{\rm b}(\R^d)$ which is a solution of \eqref{eq:eign}. Then the following hold:
\begin{itemize}
\item[(1)]
If $\beta < \alpha$, then there exist constants $C_1, C_2 > 0$ such that
$$
\frac{C_1}{(1+|x|)^{d+\alpha-\beta}} \leq \varphi(x) \leq \frac{C_2}{(1+|x|)^{d+\alpha-\beta}}, \quad x \in \R^d.
$$
In particular, $\varphi \in L^p(\R^d)$, for every $p \geq 1$.
\item[(2)]
If $\beta \geq \alpha$, then there exist $\gamma \in (0,1)$ and a constant $C_3 >0$ such that
$$
\varphi(x) \geq \frac{C_3}{(1+|x|)^{d-\gamma}}, \quad x \in \R^d.
$$
In particular, $\varphi \notin L^p(\R^d)$, for every $p \in [1, \frac{d}{d-\gamma}]$. On the other hand,
if $\varphi \in L^p(\R^d)$ for some $p > 1$, then there exists $C_4 >0$ such that
$$
\varphi(x) \leq \frac{C_4}{(1+|x|)^{d/p}}, \quad x \in \R^d.
$$
In particular, if  $\varphi \in L^p(\R^d)$ with $p = \frac{d}{d-\gamma-\varepsilon}$ for some $\varepsilon
\in (0,1)$, then there exists $C_4 >0$ such that
$$
\varphi(x) \leq \frac{C_4}{(1+|x|)^{d-\gamma-\varepsilon}}, \quad x \in \R^d.
$$
\end{itemize}
\end{theorem}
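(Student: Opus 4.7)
The plan is to reduce the theorem to a direct application of the general upper and lower bound results of Section~4, plugging in the explicit asymptotics of the stable case. By \eqref{eq:Lm_stable} and \eqref{eq:Psi_stable} we have $\nu(x)\asymp|x|^{-d-\alpha}$ and $\Psi(r)\asymp r^\alpha$, while the hypothesis $V(x)\asymp|x|^{-\beta}$ for $|x|\geq r_0$ immediately gives $V_{*}(x)\asymp V^{*}(x)\asymp|x|^{-\beta}$ (since the inf and sup over concentric spherical shells of a pure power behave like the same power). Assumptions (A1)--(A3) are verified in the paragraphs immediately preceding the theorem, and (A4) holds by positivity of $V$ at infinity.

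For part (1), where $\beta<\alpha$, the two hypotheses of Theorem~\ref{thm:upper_bound_pos}(1) and Theorem~\ref{thm:lower_bound_pos}(1) are straightforward to check: $\Psi(1/|x|)/V_{*}(x)\asymp|x|^{\beta-\alpha}\to 0$, and $\int_{|x|>2r_0}\nu(x)/V_{*}(x)\,dx\asymp\int^{\infty}r^{-1-(\alpha-\beta)}\,dr<\infty$. Both theorems then yield $\varphi(x)\asymp\nu(x)/V_{*}(x)\asymp|x|^{-(d+\alpha-\beta)}$ for $|x|$ large, which together with $\varphi\in C_{\rm b}(\R^d)$ and positivity extends to all of $\R^d$ after adjusting the constants.

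For the lower bound in part (2), i.e.\ $\beta\geq\alpha$, the ratio $\Psi(1/|x|)/V^{*}(x)\asymp|x|^{\beta-\alpha}$ is bounded away from $0$, and $\int_{|x|>2r_0}\nu(x)/\Psi(1/|x|)\,dx\asymp\int^{\infty}r^{-1}\,dr=\infty$, placing us in case (4) of Theorem~\ref{thm:lower_bound_pos}. The key computation is the exponent, namely $\int_{2r_0\leq|y|\leq|x|}\nu(y)/\Psi(1/|y|)\,dy=c\log|x|+O(1)$, which turns the exponential into a polynomial correction and yields
\[
\varphi(x)\geq C\,\frac{\nu(x)}{\Psi(1/|x|)}\,|x|^{\eta^{*}c}\asymp|x|^{-d+\eta^{*}c}.
\]
If the exponent $\eta^{*}c$ happens to exceed $1$, I replace it by any $\gamma\in(0,1)$: since $|x|^{-d+\gamma}$ is monotone in $\gamma$ for $|x|\geq 1$, the inequality persists. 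The non-membership $\varphi\notin L^p$ for $p\in[1,d/(d-\gamma)]$ then follows immediately from $\int_{|x|>R}|x|^{-(d-\gamma)p}\,dx=\infty$ when $(d-\gamma)p\leq d$.

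For the complementary upper bound under $\varphi\in L^p$, $p>1$, the hypothesis $\liminf\Psi/V_{*}>0$ is already established, so Theorem~\ref{thm:upper_bound_pos}(3) applies. The remaining work is the computation
\[
\Big(\int_{|z|>|x|}\nu(z)^{p/(p-1)}\,dz\Big)^{(p-1)/p}\asymp|x|^{-d/p-\alpha},
\]
from $\nu(z)^{p/(p-1)}\asymp|z|^{-(d+\alpha)p/(p-1)}$ and integration in polar coordinates; dividing by $\Psi(1/|x|)\asymp|x|^{-\alpha}$ produces $|x|^{-d/p}$, which matches the explicit $|x|^{-d/p}$ term of the theorem. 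The final sharper statement is the specialisation $p=d/(d-\gamma-\varepsilon)$, giving $d/p=d-\gamma-\varepsilon$. The real obstacle here is not any single computation but the genuine gap between the lower exponent $d-\gamma$ and the upper exponent $d/p$: this reflects the fact that in the regime $\beta\geq\alpha$ the potential is too weak relative to the kinetic term to force the fall-off rate of $\nu$, so $\varphi$ must decay strictly slower than $\nu$ and the general machinery of Sections~3--4 cannot close the gap by itself.
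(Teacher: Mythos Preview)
Your proposal is correct and follows essentially the same route as the paper: the paper's own proof is a two-line appeal to Theorem~\ref{thm:upper_bound_pos}(1),(3) for the upper bounds and Theorem~\ref{thm:lower_bound_pos}(1),(4) for the lower bounds, and you have supplied exactly those reductions together with the explicit verification of the hypotheses and the polynomial computations (the logarithmic growth of $\int\nu/\Psi$ turning the exponential into a power, and the H\"older-type integral producing $|x|^{-d/p}$). Your closing paragraph is commentary rather than argument, but the mathematical content matches the paper.
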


\begin{proof}
Due to \eqref{eq:Lm_stable} and \eqref{eq:Psi_stable}, the upper bounds in (1)-(2) follow directly from Theorem
\ref{thm:upper_bound_pos} (1) and (3). The corresponding lower estimates are a consequence of Theorem
\ref{thm:lower_bound_pos} (1) and (4), respectively.
\end{proof}

This has the following implication.
\begin{corollary}
Under the assumptions of Theorem \ref{thm:polynomial} we obtain that $\varphi \in L^1(\R^d)$ if and
only if $\alpha > \beta$.
\end{corollary}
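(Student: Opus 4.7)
The plan is to derive the equivalence directly from the two cases of Theorem \ref{thm:polynomial}, since both sharp-side estimates are already in place. For the forward implication ($\alpha>\beta \Rightarrow \varphi \in L^1$), I would simply invoke part (1) of the theorem, which gives the pointwise upper bound $\varphi(x) \leq C_2 (1+|x|)^{-(d+\alpha-\beta)}$. Since $\alpha-\beta>0$, the exponent $d+\alpha-\beta$ strictly exceeds $d$, so a spherical-shell integration in $\R^d$ converges and yields $\varphi \in L^1(\R^d)$.

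For the converse ($\alpha \leq \beta \Rightarrow \varphi \notin L^1$), I would take the contrapositive route using part (2) of the theorem, which supplies the lower bound $\varphi(x) \geq C_3 (1+|x|)^{-(d-\gamma)}$ for some $\gamma \in (0,1)$. Because $d-\gamma<d$, the tail integral $\int_{|x|>R}(1+|x|)^{-(d-\gamma)}dx$ diverges for every $R>0$, forcing $\varphi \notin L^1(\R^d)$. Combining the two directions gives the stated biconditional.

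The only potential subtlety, and thus the main thing I would double-check, is that Theorem \ref{thm:polynomial}(2) really does produce a $\gamma$ strictly in $(0,1)$ for the \emph{entire} range $\beta \geq \alpha$ and not just for a subrange; once that is granted, the $L^1$ failure is immediate from the polynomial lower bound. No further work is needed: the corollary is a clean consequence of matching upper and lower envelopes already established, with the dichotomy $\alpha>\beta$ vs.\ $\alpha\leq\beta$ exactly controlling whether the decay rate crosses the integrability threshold $d$.
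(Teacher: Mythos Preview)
Your proposal is correct and matches the paper's approach: the corollary is stated without proof in the paper, being an immediate consequence of the two cases of Theorem \ref{thm:polynomial}, exactly as you argue. Your check that $\gamma\in(0,1)$ indeed covers the full range $\beta\geq\alpha$ is the only point requiring attention, and the theorem statement confirms it.
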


\begin{remark}
{\rm
Let $V$ be a potential positive at infinity, and $V(x) \asymp |x|^{-\beta}$ as $|x|\to\infty$, and consider
the fractional Laplacian $(-\Delta)^{\alpha/2}$, $0 < \alpha < 2$. Although the constants are hard to control
in sufficient detail, a calculation using the above estimates shows that if $\beta \geq \alpha$ and
$\frac{C_1 C_{15}}{C_5 C_{10}} \geq d$, then zero is not an eigenvalue of $(-\Delta)^{\alpha/2} + V$. We note
that $C_{10}$ and $C_{15}$ play the more important role here, giving some best constants involving the jump
doubling domination rate and another ratio related to jump activity. Also, from Theorem \ref{thm:polynomial}(2)
we see that whenever $(0,1) \ni \gamma \geq \frac{d}{2}$, which may occur when $d=1$, the operator $H$ has no
zero eigenvalue.
}
\end{remark}

It can already be seen from the above theorem that there is a transition in the localization properties of
$\varphi$ when $\alpha > \beta$ changes to $\alpha \leq \beta$. For a closer understanding of this transition
around $\alpha \approx \beta$, we consider a more refined class of potentials.

\begin{theorem} \label{thm:polynomial+logarithmic}
Let $L^{(\alpha)}$, $0 < \alpha < 2$, be a pseudo-differential operator determined by \eqref{eq:phi_stable}
and $V$ be an $X$-Kato class potential for which there exists $r_0 > 0$ such that $V(x) > 0$ and $V(x)
\asymp |x|^{-\alpha} (\log|x|)^{\delta}$, for $|x| \geq r_0$, with some $\delta>0$. Suppose that there exists
a positive function $\varphi \in C_{\rm b}(\R^d)$ which is a solution of \eqref{eq:eign}. Then the following
hold.
\begin{itemize}
\item[(1)]
If $\delta > 1$, then there exist constants $C_1, C_2 > 0$ such that
$$
\frac{C_1}{(1+|x|)^d \log(1+|x|)^{\delta}} \leq \varphi(x) \leq \frac{C_2}{(1+|x|)^d \log(1+|x|)^{\delta}},
\quad x \in \R^d.
$$
In particular, $\varphi \in L^p(\R^d)$, for every $p \geq 1$.
\item[(2)]
If $\delta = 1$, then there exist $0 < \gamma_1 \leq 1 \leq \gamma_2$ and constants $C_4, C_5 >0$ such that
$$
\frac{C_1}{(1+|x|)^d \log(1+|x|)^{1-\gamma_1}} \leq \varphi(x) \leq \frac{C_2}{(1+|x|)^d \log(1+|x|)^{1-\gamma_2}},
\quad x \in \R^d.
$$
In particular, $\varphi \in L^p(\R^d)$ for every $p > 1$, but $\varphi \notin L^1(\R^d)$.
\item[(3)]
If $\delta \in (0,1)$, then there exist $0 < \gamma_1 \leq 1 \leq \gamma_2$ and constants $C_6, C_7 >0$
such that
$$
C_6 \frac{e^{\frac{\gamma_1}{1-\delta} \log|x|^{1-\delta}}}{(1+|x|)^d \log(1+|x|)^{\delta}} \leq \varphi(x)
\leq C_7 \frac{e^{\frac{\gamma_2}{1-\delta} \log|x|^{1-\delta}}}{(1+|x|)^d \log(1+|x|)^{\delta}},
\quad x \in \R^d.
$$
In particular, $\varphi\in L^p(\R^d)$, for every $p > 1$, but $\varphi \notin L^1(\R^d)$.
\item[(4)]
If $\delta \leq 0$, then we have exactly the same bounds and $L^p$-properties as in (2) of Theorem
\ref{thm:polynomial}.
\end{itemize}
\end{theorem}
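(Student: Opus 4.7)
The plan is to feed the asymptotics of the isotropic or anisotropic $\alpha$-stable setting, captured by $\nu(x) \asymp |x|^{-d-\alpha}$ (\ref{eq:Lm_stable}) and $\Psi(1/|x|) \asymp |x|^{-\alpha}$ (\ref{eq:Psi_stable}), into the general upper bound Theorem \ref{thm:upper_bound_pos} and the general lower bound Theorem \ref{thm:lower_bound_pos}, distinguishing the four ranges of $\delta$ by the behaviour of the two basic ratios
\[
\frac{\Psi(1/|x|)}{V_*(x)} \asymp (\log|x|)^{-\delta}, \qquad \frac{\nu(x)}{V_*(x)} \asymp \frac{1}{|x|^d (\log|x|)^{\delta}},
\]
together with the radial accumulator $h_{\nu,V}(r) \asymp \int_{2r_0}^{r} s^{-1}(\log s)^{-\delta}\, ds$. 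Since the potential has comparable upper and lower envelopes, $V_*(x) \asymp V^*(x) \asymp |x|^{-\alpha}(\log|x|)^{\delta}$, the two-sided estimates from Sections 4-5 are driven by the same quantities, so in the favourable regimes they fit together.

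First I would treat $\delta > 1$, where $\int \nu/V_*$ converges and $\Psi(1/|x|)/V_*(x) \to 0$; then Theorem \ref{thm:upper_bound_pos}(1) and Theorem \ref{thm:lower_bound_pos}(1) apply directly and yield the matching sharp bound of (1). For $\delta = 1$ a direct computation gives $h_{\nu,V}(r) \asymp \log\log r$, hence $e^{\eta h_{\nu,V}(r)} \asymp (\log r)^{\eta}$; the supremum condition \eqref{eq:bound_h} reduces to the finiteness of $\sup_{r} r^{-d}(\log r)^{\eta - 1}$, which is immediate because the polynomial $r^{-d}$ overwhelms every power of $\log r$. Theorem \ref{thm:upper_bound_pos}(2) and Theorem \ref{thm:lower_bound_pos}(2) then deliver (2) with $\gamma_2 = \eta_*$ and $\gamma_1 = \eta^*$. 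For $\delta \in (0,1)$ the accumulator diverges like $(\log r)^{1-\delta}/(1-\delta)$, yielding the stretched-logarithmic factor $\exp\bigl(\tfrac{\eta}{1-\delta}(\log|x|)^{1-\delta}\bigr)$, which is sub-polynomial and therefore again compatible with \eqref{eq:bound_h}; the same two theorems give (3). Finally, for $\delta \leq 0$ the first ratio $\Psi(1/|x|)/V_*(x) = (\log|x|)^{-\delta}$ stays bounded below, which pushes us into Theorem \ref{thm:upper_bound_pos}(3) and Theorem \ref{thm:lower_bound_pos}(3)--(4); the resulting integrals are exactly those already evaluated in the proof of Theorem \ref{thm:polynomial}(2), so one simply quotes that calculation.

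The main obstacle is to recognize that the rates $\eta_*$ and $\eta^*$ coming from Lemmas \ref{lem:defic1} and \ref{lem:lower_tech} are built out of different combinations of the constants $C_1,C_4,C_5,C_6,C_{10},C_{15}$ from Section \ref{sec:tech} and are not a priori comparable. This asymmetry forces the range $0 < \gamma_1 \leq 1 \leq \gamma_2$ in (2) and (3) and prevents us from collapsing the two bounds into one sharp exponent; sharpening this would require a more delicate probabilistic comparison of the mean sojourn time \eqref{meanlif} against the harmonic-measure weights produced by the Ikeda--Watanabe formula \eqref{eq:IWF} used in the upper-bound argument, which lies outside the reach of the self-improving iteration of Section \ref{sec:tech}. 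The $L^p$-assertions in each case are then immediate corollaries obtained by integrating the pointwise bounds in spherical coordinates.
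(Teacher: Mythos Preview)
Your proposal is correct and follows essentially the same route as the paper: the paper's proof simply records that the upper bounds in (1)--(4) come from Theorem \ref{thm:upper_bound_pos} (1)--(3) (with both (2) and (3) here obtained from part (2) of that theorem) and the lower bounds from parts (1), (2), and (4) of Theorem \ref{thm:lower_bound_pos}, exactly as you outline. Your explicit computation of the accumulator $h_{\nu,V}$ in each regime and the verification of \eqref{eq:bound_h} are precisely the calculations the paper leaves implicit; the only minor imprecision is that for $\delta \le 0$ only part (4) of Theorem \ref{thm:lower_bound_pos} is relevant, since $\int_{|x|>r_0}\nu(x)/\Psi(1/|x|)\,dx \asymp \int_{r_0}^\infty r^{-1}\,dr = \infty$.
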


\begin{proof}
Similarly as above, the upper bounds in (1)-(4) follow by an application of the estimates in Theorem
\ref{thm:upper_bound_pos} (1)-(3); specifically, both upper estimates (2) and (3) result from assertion
(2) of this theorem. The corresponding lower estimates are consequences of the respective bounds in (1),
(2), and (4) of Theorem  \ref{thm:lower_bound_pos}.
\end{proof}

From the results above it is seen that the possible localization properties of the positive zero-energy
eigenfunctions or zero-resonances for decaying potentials positive at infinity splits naturally into
disjoint regimes representing the following three different scenarios. (For the simplicity of the discussion
here, we assume that $V$ is a potential that is positive at infinity and regular enough so that $V(x)
\asymp V^{*}(x) \asymp V_{*}(x)$ far away from the origin). Let $r_0 > 0$ be large enough, and define
$$
h(r) = \int_{r_0 < |x| < r} \frac{\nu(x)}{V(x)} dx \quad \mbox{and} \quad \widetilde h(r) = \int_{r_0 < |x| < r}
\frac{\nu(x)}{\Psi(1/|x|)} dx, \quad r >r_0.
$$
Clearly, $h$ and $\widetilde h$ are bounded functions on $(r_0,\infty)$ if and only if the ratios $\nu/V$ and
$\nu/\Psi(1/|x|)$, respectively, are integrable at infinity. The following situations occur:
\begin{itemize}
\item
\emph{Scenario (1)}:
If $\lim_{|x| \to \infty} \frac{\Psi(1/|x|)}{V(x)} = 0$ and the ratio $\frac{\nu(x)}{V(x)}$
is integrable at infinity, then
$$
\varphi(x) \asymp \frac{\nu(x)}{V(x)},
$$
for large enough $|x|$. In particular, $\varphi \in L^1(\R^d)$. Clearly, in this case the corresponding function
$h$ is bounded.

\item
\emph{Scenario (2)}:
If $\lim_{|x| \to \infty} \frac{\Psi(1/|x|)}{V(x)} = 0$ and the integrability of $\frac{\nu(x)}
{V(x)}$ at infinity breaks down, we have $h(r) \to \infty$ as $r \to \infty$. Hence (1) is no longer true and the
function $h$ contributes into the behaviour of $\varphi$ at infinity like
$$
C_1 \frac{\nu(x)}{V(x)} e^{\gamma_1 h(|x|)}  \leq \varphi(x) \leq C_2 \frac{\nu(x)}{V(x)} e^{\gamma_2 h(|x|)},
$$
for large enough $x$, with some $0< \gamma_1 \leq 1 \leq \gamma_2$ and $C_1, C_2 >0$. Observe that Scenario (1)
differs from (2) by the boundedness of $h$.

\item
\emph{Scenario (3)}:
If $\liminf_{|x| \to \infty} \frac{\Psi(1/|x|)}{V(x)} > 0$, then the fall-off rate of $\varphi$ at infinity rapidly
decreases so that
$$
\varphi(x) \geq C_1 \frac{\nu(x)}{\Psi(1/|x|)} e^{\gamma \widetilde h(|x|)},
$$
for large $x$, with some $\gamma, C_1 >0$. Clearly, in case $\widetilde h$ is bounded, it does not contribute to
the above lower rate.
\end{itemize}

\smallskip
Now we revisit the examples in \eqref{eq:motiv_ex} in the situation of Theorems
\ref{thm:polynomial}-\ref{thm:polynomial+logarithmic}.

\begin{example}
{\rm
Let $\alpha \in (0,2)$, $d \geq 1$, and $L^{(\alpha)} = -(-\Delta)^{\alpha/2}$. In this case
$$
\nu(x) = \frac{C(d,\alpha)}{|x|^{d+\alpha}} \quad \mbox{and} \quad \psi(x) = \Psi(|x|) = |x|^\alpha.
$$
To have $\varphi_\kappa > 0$, we consider $l = 0$ in \eqref{eq:motiv_ex}, and to have positive potentials at
infinity, we consider $\kappa \in (\frac{d-\alpha}{2},\frac{d+\alpha}{2})$, $\alpha \leq d$, in accordance with
\eqref{pos}. With these choices we have the following:
\begin{itemize}
\item[(1)]
If $\frac{d}{2} <\kappa< \frac{d +\alpha}{2}$, then by \eqref{exdecays} we have $V_{\kappa}(x) \asymp |x|^{-\beta}$
as in Theorem \ref{thm:polynomial} with $\beta = \beta(\kappa) = d+\alpha-2\kappa$. Since $0< \beta < \alpha$ for
this range of $\kappa$, we are in Scenario (1) above and
$$
\varphi_{\kappa}(x) \asymp \frac{1}{(1+|x|)^{d+\alpha-\beta}},
$$
with $d+\alpha-\beta = 2\kappa$ as in Theorem \ref{thm:polynomial} (1). Here we recover the exact asymptotic
behaviour of $\varphi_{\kappa}$ at infinity.

\item[(2)]
If $\kappa =  \frac{d}{2}$, then by \eqref{exdecays} we have $V_{d/2}(x) \asymp |x|^{-\alpha} \log|x|$, and
Theorem \ref{thm:polynomial+logarithmic} gives that there exist $0 < \gamma_1 \leq 1 \leq \gamma_2$ and constants
$C_1, C_2 >0$ such that
$$
\frac{C_1}{(1+|x|)^d \log(1+|x|)^{1-\gamma_1}} \leq \varphi_{d/2}(x) \leq \frac{C_2}{(1+|x|)^d \log(1+|x|)^{1-\gamma_2}}.
$$
Hence we are now in scenario (2) above. In particular, this means that we recover the behaviour $\varphi_{\kappa}(x)
\asymp 1/|x|^d$ as in \eqref{eq:motiv_ex}, with a near miss dependent on how close $\gamma_1 \leq 1 \leq \gamma_2$ are
to each other. Clearly, this is a marginal and the most delicate case, and closing the gap would require a more refined
analysis. Note that this special case coincides with the threshold $\kappa$ for which $\varphi_{\kappa}\notin L^1(\R^d)$.

\item[(3)]
If $0 \vee \frac{d-\alpha}{2} < \kappa <\frac{d}{2}$, then by \eqref{exdecays} we have $V(x) \asymp |x|^{-\alpha}$,
placing this case in Scenario (3). Theorem \ref{thm:polynomial} (2) gives that there exist $C_1 >0$ and $\gamma =
\gamma(\kappa) \in (0,1)$ such that
$$
\varphi_\kappa(x) \geq \frac{C_1}{(1+|x|)^{d-\gamma}}.
$$
Moreover, observe that for every $\kappa$ from this range and $p > \frac{d}{2\kappa}$ we have $\varphi_\kappa
\in L^p(\R^d)$. Then by the upper bound in Theorem \ref{thm:polynomial} (2) we also get
$$
\varphi_\kappa (x) \leq \frac{C_2}{(1+|x|)^{2\kappa -\varepsilon}},
$$
for every small enough $\varepsilon > 0$, with some $C_2>0$.
\end{itemize}
}
\end{example}

When the solution of the eigenvalue equation \eqref{eq:eign} is antisymmetric with respect to a given hyperplane and
has a definite sign in each nodal domain, then at least far away from this nodal plane some of our upper estimates
improve significantly. If the solution $\varphi$ is no longer positive on $\R^d$ but it satisfies \eqref{eq:antisymmetry},
then the upper bounds in Theorems \ref{thm:polynomial} (1)-(2) and Theorem \ref{thm:polynomial+logarithmic} (1)-(3) also
hold with $\varphi$ replaced with $|\varphi|$ (this is a consequence of Theorem \ref{thm:upper_bound_pos} (1)-(2) and
Theorem \ref{thm:upper_bound_pos_antisymm} (2)). However, due to Theorem \ref{thm:upper_bound_pos_antisymm} (1), in this
case the upper bound for $|\varphi|$ improves and the upper estimates in Theorems \ref{thm:polynomial} (1) and Theorem
\ref{thm:polynomial+logarithmic} (1)-(2) upgrade as follows.
\begin{corollary}
The following situations occur:
\begin{itemize}
\item[(1)]
Under the assumptions of Theorem \ref{thm:polynomial}, if $\beta < \alpha$ and $\varphi \in C_{\rm b}(\R^d)$ is a
solution of \eqref{eq:eign} such that \eqref{eq:antisymmetry} holds, then there is $C_1 >0$ such that
$$
|\varphi(x)| \leq \frac{C_1}{(1+|x|)^{d+\alpha-\beta}} \frac{1}{(1+|x|\1_{\left\{|x_1|\geq1\right\}}))^{\alpha-\beta}},
\quad x \in \R^d.
$$
\item[(2)]
Under the assumptions of Theorem \ref{thm:polynomial+logarithmic}, if $\delta > 1$ and $\varphi \in C_{\rm b}(\R^d)$
is a solution of \eqref{eq:eign} such that \eqref{eq:antisymmetry} holds, then there is $C_2 >0$ such that
$$
|\varphi(x)| \leq \frac{C_2}{(1+|x|)^d \log(1+|x|)^{\delta}} \frac{1}{(1+\1_{\left\{|x_1|\geq1\right\}}\log|x|))^{\delta}},
\quad x \in \R^d.
$$
\end{itemize}
\end{corollary}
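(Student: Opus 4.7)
Both parts of the corollary reduce to invoking Theorem \ref{thm:upper_bound_pos_antisymm}(1) in the fractional-Laplacian setting, together with the pointwise baseline estimates supplied by Theorems \ref{thm:polynomial}(1) and \ref{thm:polynomial+logarithmic}(1), and finally patching the resulting two regimes via the indicator $\1_{\{|x_1|\geq 1\}}$. My plan is to carry this out in three stages.

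First, I would verify that Theorem \ref{thm:upper_bound_pos_antisymm}(1) applies. The smoothness condition \eqref{eq:smoothnu} is obtained by applying the mean value theorem to $r\mapsto r^{-d-\alpha}$ and using the boundedness of the spherical profile $g$, which yields $|\nu(z_1)-\nu(z_2)| \leq C\,\nu(z_2)|z_1-z_2|/|z_2|$ for $|z_1|\geq |z_2|\geq R_0$. The limit $\Psi(1/|x|)/V_*(x)\to 0$ and the integrability of $\nu/V_*$ at infinity are immediate from $\Psi(1/|x|)\asymp |x|^{-\alpha}$, $V_*(x)\asymp |x|^{-\beta}$ with $\beta<\alpha$ (in Part (1)), and from $V_*(x)\asymp |x|^{-\alpha}(\log|x|)^{\delta}$ with $\delta>1$ (in Part (2)). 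The required doubling estimate \eqref{eq:doub_monot} reduces to almost-monotonicity of the explicit profile $\nu/V_*$, which is clear. Finally, $\varphi\in L^1(\R^d)$ is guaranteed by the baseline theorems, and $\|\varphi\|_\infty<\infty$ because $\varphi\in C_{\rm b}(\R^d)$.

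Second, I would substitute the asymptotics into the factor
$$
\frac{\Psi(1/|x|)}{V_*(x)} \vee \Bigl(\tfrac{1}{|x|}\!\!\!\int_{2r_0<|z|<|x|/2}\!\!\! |z_1|\,\tfrac{\nu(z)}{V_*(z)}\,dz\Bigr)
$$
and compute. In Part (1), passing to polar coordinates $z_1=r\cos\theta$ gives the inner integral $\asymp \int_{2r_0}^{|x|/2} r^{\beta-\alpha}dr$; together with $\Psi(1/|x|)/V_*(x)\asymp |x|^{\beta-\alpha}$ this max is bounded by $C|x|^{-(\alpha-\beta)}$ in the regime $\alpha-\beta\leq 1$, and in the remaining subrange $\alpha-\beta>1$ the same conclusion is reached by feeding the already-improved bound on $|\varphi|$ back into the antisymmetric integral $\int_{B(0,|x|/2)}|\varphi(z)||z_1|dz$ in the proof of Theorem \ref{thm:upper_bound_pos_antisymm}(1) and iterating until the integral contribution falls below $\Psi(1/|x|)/V_*(x)$. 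Multiplying by $\nu(x)/V_*(x)\asymp |x|^{-(d+\alpha-\beta)}$ produces exactly the claimed rate on $\{|x_1|\geq R\}$. Part (2) is analogous: the extra $(\log|x|)^{-\delta}$ comes from the $\Psi/V_*$ term, while the log-weighted integral contribution is controlled via the same iteration, now with an improved $|\varphi(z)|\lesssim |z|^{-d}(\log|z|)^{-\delta}$ used inside.

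Third, I would glue the two regimes. On the outer set $\{|x_1|\geq R\}$ we have $(1+|x|\1_{\{|x_1|\geq 1\}})^{\alpha-\beta}\asymp (1+|x|)^{\alpha-\beta}$ and the bound from Stage 2 is precisely the corollary's claim. On $\{|x_1|<R\}$ (and trivially on bounded sets) the factor $(1+|x|\1_{\{|x_1|\geq 1\}})^{\alpha-\beta}$ is either $1$ (for $|x_1|<1$) or $\leq (1+R)^{\alpha-\beta}$ (for $1\leq |x_1|<R$), so the baseline estimates of Theorems \ref{thm:polynomial}(1) and \ref{thm:polynomial+logarithmic}(1) suffice after enlarging the constant. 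The main obstacle is the iteration step in Stage 2: when $\alpha-\beta$ is close to $\alpha$ (resp.\ when $\delta$ is large), the integral term in the max is not dominated by $\Psi/V_*$ after a single application, and one has to feed each improved pointwise estimate for $|\varphi(z)|$ back into the antisymmetric difference $\nu(y-z)-\nu(y-\hat z)$ arising in the proof of Theorem \ref{thm:upper_bound_pos_antisymm}(1) before the recursion stabilizes at the desired rate.
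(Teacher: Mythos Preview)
Your proposal is correct and takes essentially the same approach as the paper, which does not give an explicit proof but attributes the corollary directly to Theorem~\ref{thm:upper_bound_pos_antisymm}(1) (see the paragraph preceding the statement and the remark on iteration immediately after Theorem~\ref{thm:upper_bound_pos_antisymm}). Your three-stage outline---verifying the hypotheses, substituting the explicit fractional or logarithmic asymptotics into the factor in Theorem~\ref{thm:upper_bound_pos_antisymm}(1), and gluing with the baseline bounds of Theorems~\ref{thm:polynomial}(1) and~\ref{thm:polynomial+logarithmic}(1) off $\{|x_1|\geq R\}$---is exactly what the paper invokes, and you are in fact more explicit than the paper about the iteration needed in the subrange $\alpha-\beta>1$.
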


\begin{example}
{\rm
Comparing these upper bounds with the exact behaviours of $\varphi_{\kappa}$ for $l = 1$ and the potentials
$V_{\kappa}(x) \asymp |x|^{-\beta}$ in \eqref{eq:motiv_ex} with $\beta = d +\alpha + 2 -2\kappa$, $\kappa \in
(\frac{d+2}{2},\frac{d+\alpha+2}{2})$, we see that in this case our result is not as sharp as in the cases above,
however, it is still remarkably close to the exact rates. Indeed, we get here the upper bound $|\varphi_{\kappa}(x)|
\leq \frac{C_1}{(1+|x|)^{4(\kappa - 1) - d}}$, while the true behaviour is $|\varphi_{\kappa}(x)| \asymp
\frac{1}{|x|^{2\kappa-1}}$.
%(note that $4(\kappa - 1) - d \in (d,d+2\alpha) \subset (d,d+1)$ and $2\kappa-1 \in (d+1, d+3)$).
We emphasize that the symmetry/antisymmetry properties of eigenfunctions as in \eqref{eq:antisymmetry} are
of much interest in spectral theory and are known to have important consequences (see, e.g., \cite{bib:Kal}).
}
\end{example}

Finally, we present a result for the case of fractional Schr\"odinger operators with potentials that are
negative at infinity.

\begin{theorem} \label{thm:fractional_negative}
Let $L^{(\alpha)}$, $0 < \alpha < 2$, be a pseudo-differential operator determined by \eqref{eq:phi_stable},
and $V$ be an $X$-Kato class potential for which there exists $r_0 > 0$ such that $V(x) \leq 0$ and $|V(x)|
\leq C |x|^{-\alpha}$, for $|x| \geq r_0$, with some $C>0$. Suppose that there exists a function $\varphi
\in L^p(\R^d)$, for some $p > 1$, which is a solution to \eqref{eq:eign} with the property that there exists
$i \in \left\{1,2,...,d\right\}$ such that $\varphi(x_1,...,-x_i,...,x_d) = - \varphi(x_1,...,x_i,...,x_d)$
and $\supp \varphi_{-} \subset \left\{x \in \R^d: x_i \leq 0\right\}$. Then for every $q \in (0, \frac{d}{p})$
there exist $C=C(q)$ and $R >0$ such that
$$
|\varphi(x)| \leq C \left(\left\|\varphi\right\|_p \vee \left\|\varphi\right\|_{\infty} \right)
\frac{1}{|x|^{q}}, \quad |x_i| \geq R.
$$
\end{theorem}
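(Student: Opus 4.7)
The plan is to reduce the statement to the abstract upper bound in Theorem \ref{thm:upper_bound_neg} and then carry out an explicit power-counting using the profiles $\nu(y) \asymp |y|^{-d-\alpha}$ and $\Psi(r) \asymp r^{\alpha}$. First I would verify all the standing hypotheses of Theorem \ref{thm:upper_bound_neg} in the present setting. Assumptions (A1)--(A3) for the $\alpha$-stable family are already established in the opening paragraphs of Section \ref{subsec:fractional} via \eqref{eq:Lm_stable}--\eqref{eq:Psi_stable}, \eqref{eq:aux_pt1}--\eqref{eq:aux_pt2} and \cite[Lem. 2.2]{KL17}. The scaling condition \eqref{eq:Psi_est} follows from $\Psi(Mr)/\Psi(r) \asymp M^{\alpha}$ by taking $M=M(\varepsilon)$ large. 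Assumption (A5) is immediate from $|V(x)| \leq C|x|^{-\alpha}$ combined with $\Psi(1/|x|) \asymp |x|^{-\alpha}$. After relabeling coordinates so that $i=1$, the antisymmetry $\varphi(-x_1,x_2,\dots,x_d) = -\varphi(x_1,x_2,\dots,x_d)$ together with $\supp \varphi_- \subset \{x_1 \leq 0\}$ matches \eqref{eq:nodal_neg} with $l=0$ and puts us in the \emph{antisymmetric} clause of Theorem \ref{thm:upper_bound_neg}, which delivers the upper bound directly for $|\varphi(x)|$ when $|x_1|$ is large.

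Next I would evaluate the abstract bound with the explicit profiles. Fix $\varepsilon \in (0,\, 1 \wedge (p-1))$ and set $\sigma := p/(p-1-\varepsilon) \in (1,\infty)$, so that $(d+\alpha)\sigma > d$. Then
\begin{equation*}
\left( \int_{|y|>|x|} \nu(y)^{\sigma} \, dy \right)^{1/\sigma}
\asymp |x|^{d/\sigma - (d+\alpha)} = |x|^{-\alpha - d(1+\varepsilon)/p},
\end{equation*}
and dividing by $\Psi(1/|x|) \asymp |x|^{-\alpha}$ yields $|x|^{-d(1+\varepsilon)/p}$. Since $(1+\varepsilon)/p > 1/p$, this first term is dominated by the second summand $|x|^{-d/p}$ already present in the bound of Theorem \ref{thm:upper_bound_neg}. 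Consequently the bound collapses to
\begin{equation*}
|\varphi(x)| \leq C(\varepsilon) \left(\|\varphi\|_p \vee \|\varphi\|_\infty\right) |x|^{-d(1-\varepsilon)/p}, \quad |x_1| \geq \widetilde R(\varepsilon),
\end{equation*}
for some $\widetilde R(\varepsilon) > 3r_0$.

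Finally, given $q \in (0, d/p)$, I would pick $\varepsilon$ in the non-empty interval $(0,\, 1 \wedge (p-1) \wedge (1 - pq/d))$; the last upper bound is positive precisely because $q < d/p$. With this choice $d(1-\varepsilon)/p \geq q$, hence $|x|^{-d(1-\varepsilon)/p} \leq |x|^{-q}$ for $|x|$ large, and relabeling $1 \mapsto i$ gives the claim. The only slightly delicate point is the joint constraint $\varepsilon < p-1$ (needed so that $\sigma$ stays finite and the $L^{\sigma}$ tail integral converges) together with $\varepsilon \leq 1 - pq/d$ (needed for the decay rate); both constraints shrink as $p \downarrow 1$ or $q \uparrow d/p$, but neither collapses, so the argument is uniform in the admissible range of $p$ and $q$. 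The rest of the proof is routine bookkeeping with power laws.
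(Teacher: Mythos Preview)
Your proposal is correct and follows exactly the route the paper takes: the paper's proof is a single sentence invoking Theorem \ref{thm:upper_bound_neg} with the profiles $\nu(x) \asymp |x|^{-d-\alpha}$ and $\Psi(r) \asymp r^{\alpha}$. Your write-up simply unpacks this invocation in full detail, including the verification of \eqref{eq:Psi_est}, the power-counting that collapses the two summands, and the explicit choice of $\varepsilon$ (in particular the implicit constraint $\varepsilon < p-1$ needed for the H\"older exponent $\sigma$ to be finite, which the statement of Theorem \ref{thm:upper_bound_neg} does not spell out).
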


\begin{proof}
This is a direct application of Theorem \ref{thm:upper_bound_neg} as now we have $\nu(x) \asymp |x|^{-d-\alpha}$
and $\Psi(r) \asymp r^{\alpha}$.
\end{proof}
\noindent
Roughly speaking, the above result says that for potentials that are negative in a neighbourhood of infinity and
for antisymmetric solutions with nodal domains in the corresponding hyperplanes, $L^p$-integrability, with $p >1$,
always gives a polynomial decay of order near to $d/p$, far from the antisymmetry axis. This result can be compared
with the examples in \eqref{eq:motiv_ex}. Specifically, for every $i = 1,2,...,d$ and $l=1$ we can take $P_i(x) =
c x_i$ and
$$
\varphi_{\kappa,i}(x) = \frac{c x_i}{(1+|x|^2)^{\kappa}},
$$
with $\kappa \in \big(1, \frac{d-\alpha}{2}+1\big]$. As seen in \eqref{neg}, this leads to the case of potentials
$V_{\kappa,\alpha}$ negative in a neighbourhood of infinity. We clearly have $|\varphi_{\kappa,i}(x)| \leq C
(1+|x|)^{-d/p}$ and $\varphi_{\kappa,i} \in L^p(\R^d)$, for every $p > \frac{d}{2\kappa-1}$.

\subsection{Layered-type Schr\"odinger operators}

Let $L^{(\alpha,\gamma)}$, $\alpha \in (0,2)$, $\gamma > 2$, be a family of self-adjoint pseudo-differential
operators determined by
$$
\widehat{L^{(\alpha,\gamma)} f}(\xi) = -\psi^{(\alpha,\gamma)}(\xi) \widehat{f}(\xi), \quad \xi \in \R^d, \ \
f \in \Dom(L^{(\alpha)}) = \left\{g \in L^2(\R^d): \psi^{(\alpha,\gamma)} \widehat{g} \in L^2(\R^d) \right\},
$$
with
\begin{align} \label{eq:phi_layered}
\psi^{(\alpha,\gamma)}(\xi) = \int_{\R^d \setminus \left\{0\right\}} (1-\cos(\xi \cdot z))
\nu^{(\alpha,\gamma)}(z)dz,
\end{align}
where $\nu^{(\alpha,\gamma)}(x) = g(x/|x|)|x|^{-d-\alpha}(1 \vee |x|)^{-(\gamma-\alpha)}$, $d \geq 1$.
Here $g: \mathbb S_d \rightarrow (0,\infty)$
is such that $g(\theta) = g(-\theta)$ and $c_1 \leq g(\theta) \leq c_2$, for every $\theta
\in \mathbb S_d$, with finite positive constants $c_1, c_2$. As before, every
$\nu^{(\alpha,\gamma)}(z)dz$ is a symmetric L\'evy measure on $\R^d \setminus \left\{0\right\}$ such that
$$
\nu^{(\alpha,\gamma)}(x) \asymp |x|^{-d-\alpha}(1 \vee |x|)^{-(\gamma-\alpha)}, \quad x \in \R^d \setminus \left\{0\right\}.
$$
In particular, $\int_{\R^d \setminus \left\{0\right\}} \nu^{(\alpha,\gamma)}(z)dz = \infty$,
$\int_{\R^d \setminus \left\{0\right\}} |z|^2\nu^{(\alpha,\gamma)}(z)dz < \infty$, and
$$
\Psi(r) \asymp r^2, \quad r \in (0,1).
$$
Moreover, it follows from \cite{S10} that the probability transition densities $p(t,x)$ exist and satisfy
$$
p(t,x) \leq c_3 \left( (t^{-d/\alpha} \vee t^{-d/2})  \wedge t |x|^{-d-\alpha} (1 \vee |x|)^{-(\gamma-\alpha)}\right),
\quad t >0, \ \ x \in \R^d.
$$
This, in particular, gives (A2) and, together with \cite[Lem. 2.2]{KL17}, implies that Assumption (A3) holds as well.
The operators $L^{(\alpha,\gamma)}$ generate the class of \emph{layered $\alpha$-stable processes}.

We get the following result for potentials that are positive at infinity.

\begin{theorem}
\label{thm:layered}
Let $L^{(\alpha,\gamma)}$, $0 < \alpha < 2$, $\gamma >2$, be a pseudo-differential operator determined by \eqref{eq:phi_layered}
and $V$ be an $X$-Kato class potential for which there exists $r_0 > 0$ such that $V(x) > 0$ and $V(x)
\asymp |x|^{-\beta}$, for $|x| \geq r_0$, with some $\beta>0$. Suppose that there exists a positive function
$\varphi \in C_{\rm b}(\R^d)$ which is a solution of \eqref{eq:eign}. Then the following hold:
\begin{itemize}
\item[(1)]
If $\beta < 2$, then there exist constants $C_1, C_2 > 0$ such that
$$
\frac{C_1}{(1+|x|)^{d+\gamma-\beta}} \leq \varphi(x) \leq \frac{C_2}{(1+|x|)^{d+\gamma-\beta}}, \quad x \in \R^d.
$$
In particular, $\varphi \in L^p(\R^d)$, for every $p \geq 1$.
\item[(2)]
If $\beta \geq 2$, then there exists a constant $C_3 >0$ such that
$$
\varphi(x) \geq \frac{C_3}{(1+|x|)^{d+\gamma-2}}, \quad x \in \R^d.
$$
On the other hand, if $\varphi \in L^p(\R^d)$ for some $p > 1$, then there exists $C_4 >0$ such that
$$
\varphi(x) \leq \frac{C_4}{(1+|x|)^{d/p}}, \quad x \in \R^d.
$$
\end{itemize}
\end{theorem}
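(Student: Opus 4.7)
The plan is to derive this by direct application of Theorems \ref{thm:upper_bound_pos} and \ref{thm:lower_bound_pos} to the layered class, exactly as was done for the isotropic case in Theorem \ref{thm:polynomial}. The first step is to read off the relevant asymptotic quantities. From the form $\nu^{(\alpha,\gamma)}(x) \asymp |x|^{-d-\alpha}(1\vee |x|)^{-(\gamma-\alpha)}$ one has $\nu(x)\asymp |x|^{-d-\gamma}$ for $|x|\ge 1$, while the displayed bound $\Psi(r)\asymp r^2$ on $(0,1)$ yields $\Psi(1/|x|)\asymp |x|^{-2}$ for $|x|\ge 1$. Assumptions (A1)--(A3) are verified in the surrounding text; since $V(x)\asymp |x|^{-\beta}$ for $|x|\ge r_0$, by monotonicity the envelopes satisfy $V_{*}(x)\asymp V^{*}(x)\asymp |x|^{-\beta}$ for $|x|$ large. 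All bounds I derive for $|x|$ large will then be extended to all of $\R^d$ using $\varphi\in C_{\rm b}(\R^d)$.

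For part (1), with $\beta<2$, the ratio $\Psi(1/|x|)/V_{*}(x)\asymp |x|^{\beta-2}\to 0$, and the integrability $\int_{|x|>2r_0}\nu(x)/V_{*}(x)\,dx\asymp \int|x|^{-d-\gamma+\beta}\,dx<\infty$ holds since $\gamma>2>\beta$. Theorem \ref{thm:upper_bound_pos}(1) then gives the upper bound $\varphi(x)\lesssim \nu(x)/V_{*}(x)\asymp |x|^{-(d+\gamma-\beta)}$, and Theorem \ref{thm:lower_bound_pos}(1) gives the matching lower bound $\varphi(x)\gtrsim \nu(x)/V^{*}(x)\asymp |x|^{-(d+\gamma-\beta)}$. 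Integrability of $\varphi$ on $\R^d$ for every $p\ge 1$ follows from $d+\gamma-\beta>d$.

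For part (2), with $\beta\ge 2$, one has $\Psi(1/|x|)/V^{*}(x)\asymp |x|^{\beta-2}$, which is bounded below by a positive constant, so $\liminf_{|x|\to\infty}\Psi(1/|x|)/V^{*}(x)>0$. The integrability $\int_{|x|>2r_0}\nu(x)/\Psi(1/|x|)\,dx\asymp \int|x|^{-d-\gamma+2}\,dx<\infty$ holds because $\gamma>2$, so Theorem \ref{thm:lower_bound_pos}(3) yields $\varphi(x)\gtrsim \nu(x)/\Psi(1/|x|)\asymp |x|^{-(d+\gamma-2)}$. For the upper bound, the same condition $\liminf \Psi(1/|x|)/V_{*}(x)>0$ is available, so Theorem \ref{thm:upper_bound_pos}(3) applies: for $|x|$ large,
\[
\varphi(x)\le C\|\varphi\|_p\left(\frac{\bigl(\int_{|z|>|x|}\nu(z)^{p/(p-1)}\,dz\bigr)^{(p-1)/p}}{\Psi(1/|x|)}+\frac{1}{|x|^{d/p}}\right).
\]
A routine calculation with $\nu(z)\asymp |z|^{-d-\gamma}$ and $\Psi(1/|x|)\asymp |x|^{-2}$ shows the first summand is of order $|x|^{-d/p-(\gamma-2)}$, which is $o(|x|^{-d/p})$ because $\gamma>2$. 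Hence the second term dominates and $\varphi(x)\lesssim |x|^{-d/p}$.

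The proof is essentially an assembly exercise and I do not anticipate a serious obstacle; the delicate point, if any, is simply checking that $\gamma>2$ is exactly what is needed to make both the lower-bound integrability in part (2) and the comparison with the $|x|^{-d/p}$ term in the upper bound fall the right way, and to ensure the pointwise bounds valid for $|x|$ large transfer to all $x\in\R^d$ after absorbing the near-origin regime into the constants via boundedness and continuity of $\varphi$.
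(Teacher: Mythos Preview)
Your proposal is correct and follows exactly the approach the paper takes: the result is a direct specialization of Theorems \ref{thm:upper_bound_pos} and \ref{thm:lower_bound_pos} once one inserts $\nu(x)\asymp |x|^{-d-\gamma}$ and $\Psi(1/|x|)\asymp |x|^{-2}$, just as in the proof of Theorem \ref{thm:polynomial}. The only point of difference with the fractional case is that here $\int \nu(x)/\Psi(1/|x|)\,dx<\infty$ (because $\gamma>2$), so for part (2) you correctly invoke Theorem \ref{thm:lower_bound_pos}(3) rather than (4).
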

\noindent
For potentials negative at infinity a result similar to Theorem \ref{thm:fractional_negative} holds as well.

\subsection{Decay mechanisms}
From the above it is seen that the decay of ground states at zero eigenvalue depends essentially on two factors.
On the one hand, the sign of the potential at infinity makes a qualitative difference, and as seen in the case of
classical Schr\"odinger operators, it has an impact even on the existence of ground states. From the decay results
above one can appreciate that a positive tail of the potential has a (soft) bouncing effect tending to contain
paths in compact regions, while a negative potential leaves more room for the paths to spread out to infinity. This
difference makes the analysis of potentials negative at infinity much more difficult than of potentials positive at
infinity.
%Indeed Lemma ... cannot be extended ... because ...

On the other hand, the decay depends on some mean times spent in some regions by the paths. Using \eqref{eq:gen_est_tau}
and \eqref{meanlif}, we can give another interpretation of the results above, further highlighting the mechanisms.
Assume, for simplicity, that $V(x) \asymp V^{*}(x) \asymp V_{*}(x)$, for large enough $|x|$. Then we see that the
conditions involving the ratios
$$
\frac{\Psi\left(\frac{1}{|x|}\right)}{V(x)} \asymp
\frac{\ex^x \left[\int_0^{\tau_{B(x,|x|/2)}} e^{- \int_0^t V(X_s) ds} dt\right]}{\ex^0[\tau_{B(0,|x|)}]}
$$
in Theorems \ref{thm:upper_bound_pos}-\ref{thm:lower_bound_pos} actually refer to a balance of the mean survival
times of paths in a ball $B(x,|x|/2)$ under the potential versus in the ball $B(0,|x|)$ free of the potential. Due
to the doubling property, these two times are comparable, and describe specific global lifetimes (note that $B(x,|x|/2)$
can also be replaced by $B(x,c|x|)$, $0 < c < 1$, without qualitatively changing the results). This is in sharp contrast
with the case of confining potentials or decaying potentials leading to a strictly negative and sufficiently
low-lying ground state eigenvalue, where the decay is governed by local lifetimes as given in \eqref{meanntime}. When
$$
\ex^x \left[\int_0^{\tau_{B(x,|x|/2)}} e^{- \int_0^t V(X_s) ds} dt\right] = o(\ex^0[\tau_{B(0,|x|)}])
$$
as in Scenarios (1)-(2) above, the potential has a relatively pronounced effect, making the paths favour (large)
neighbourhoods of the origin than (large) neighbourhoods of far out points. This is reflected in the decay behaviours
of $\varphi$ by $V$ entering explicitly in Scenario (1) discussed in Section 6.1.  When, however,
$$
\ex^x \left[\int_0^{\tau_{B(x,|x|/2)}} e^{- \int_0^t V(X_s) ds} dt\right] = O(\ex^0[\tau_{B(0,|x|)}])
$$
as in Scenario (3), the effect of the potential is weak also in relative terms, and the two lifetimes evolve on
the same scale, being very near to (though clearly differing from) the situation of free fluctuations and absence
of a ground state.

\medskip
\noindent
\textbf{Acknowledgments:} JL is pleased to thank Erik Skibsted for pointing out some references on the zero-eigenvalue
case for classical Schr\"odinger operators.


\begin{thebibliography}{99}

\bibitem{A70}
S. Agmon:
Lower bounds for solutions of Schr\"odinger equations,  \emph{J. Analyse Math.} \textbf{23} (1970), 1-25

\bibitem{ABG}
W.O. Amrein, A.M. Berthier, V. Georgescu:
Lower bounds for zero energy eigenfunctions of Schr\"odinger operators, \emph{Helv. Phys. Acta} \textbf{57} (1984),
301-306


\bibitem{BY90}
R. Benguria, C. Yarur:
Sharp condition on the decay of the potential for the absence of a zero-energy ground state of the Schr\"odinger
equation, \emph{ J. Phys. A} \textbf{23} (1990), 1513-1518

\bibitem{bib:BKK}
K. Bogdan, T. Kumagai, M. Kwa{\'s}nicki:
Boundary Harnack inequality for Markov processes with jumps, \emph{Trans. Amer. Math. Soc.} \textbf{367}
(2015), 477-517

%\bibitem{BS}
%F.A. Berezin, M.A: Shubin:
%\emph{The Schr\"odinger Equation}, Kluwer Academic Publ., 1991

\bibitem{CK01}
M. Christ, A. Kiselev:
One-dimensional Schr\"odinger operators with slowloy decaying potentials: spectra and asymptotics, notes to
\emph{Workshop on Oscillatory Integrals and Dispersive Equations, IPAM}, 2001

\bibitem{C17}
J.-C. Cuenin:
Embedded eigenvalues of generalized Schr\"odinger operators,  arXiv:1709.06989

\bibitem{bib:DC}
M. Demuth, J. A. van Casteren:
\emph{Stochastic Spectral Theory for Selfadjoint Feller Operators. A Functional Analysis Approach},
Probability  and its  Applications, Birkh\"auser  Verlag,  Basel,  2000

\bibitem{DS07}
S. Denisov, A. Kiselev:
Spectral properties of Schr\"odinger operators with decaying potentials, in: \emph{Proceedings of Symposia
in Pure Mathematics (B. Simon Festschrift)}, Vol. \textbf{76}, AMS, 2007

\bibitem{DS09}
J. Derezi\'nski, E. Skibsted:
Quantum scattering at low energies, \emph{J. Funct. Anal.} \textbf{257} (2009), 1828-1920


\bibitem{EK}
M.S.P. Eastham, H. Kalf:
\emph{Schr\"odinger-Type Operators with Continuous Spectra}, Pitman, 1982

\bibitem{FF14}
M.M. Fall, V. Felli:
Unique continuation property and local asymptotics of solutions to fractional elliptic equations,
\emph{Comm. Part. Diff. Eqs.} \textbf{39} (2014), 354-397


\bibitem{FS04}
S. Fournais, E. Skibsted:
Zero energy asymptotics of the resolvent for a class of slowly decaying potentials, \emph{Math. Z.}
\textbf{248} (2004), 593-633

\bibitem{GH}
P. G\l owacki, W. Hebisch:
Pointwise estimates for densities of stable semigroups of measures,
\emph{Studia Math.} \textbf{104} (1993), 243-258

\bibitem{HS}
I. Herbst, E. Skibsted:
Decay of eigenfunctions of elliptic PDE’s, I, \emph{Adv. Math.} \textbf{270} (2015), 138-180

\bibitem{H}
A.M. Hinz:
Obere Schranken f\"ur Eigenfunktionen eines Operators $-\Delta + q$, \emph{Math. Z.} \textbf{185} (1984),
291-304

\bibitem{bib:HIL12}
F. Hiroshima, T. Ichinose, J. L\H{o}rinczi:
Path integral representation for Schr\"odinger operators with Bernstein functions of the Laplacian,
\emph{Rev. Math. Phys.} \textbf{24} (2012), 1250013

\bibitem{bib:HL12}
F. Hiroshima, J. L\H orinczi:
Lieb-Thirring bound for Schr\"odinger operators with Bernstein functions of the Laplacian,
\emph{Commun. Stoch. Anal.} \textbf{6} (2012), 589-602

\bibitem{bib:IW}
N. Ikeda, S. Watanabe:
On some relations between the harmonic measure and the L\'evy measure for a certain class of Markov processes,
\emph{J. Math. Kyoto Univ.} \textbf{2-1} (1961), 79-95

\bibitem{bib:J}
N. Jacob:
\emph{Pseudo-Differential Operators and Markov Processes}, vols. 1-3, Imperial College Press, 2003-2005

\bibitem{JL18}
C. J\"ah, J. L\H{o}rinczi:
Eigenvalues at the continuum edge for fractional Schr\"odinger operators, preprint, 2018

\bibitem{JK79}
A. Jensen, T. Kato:
Spectral properties of Schr\"odinger operators and time-decay of the wave functions, \emph{Duke Math. J.}
\textbf{46} (1979), 583-611


\bibitem{bib:Kal}
K. Kaleta:
Spectral gap lower bound for the one-dimensional fractional Schr\"odinger operator in the interval,
\emph{Studia Math.} \textbf{209} (2012), 267-287

\bibitem{KL15}
K. Kaleta, J. L\H{o}rinczi:
Pointwise estimates of the eigenfunctions and intrinsic ultracontractivity-type properties of Feynman-Kac semigroups
for a class of L\'evy processes, \emph{Ann. Probab.} \textbf{43} (2015), 1350-1398

\bibitem{KL16a}
K. Kaleta, J. L\H{o}rinczi:
Transition in the decay rates of stationary distributions of L\'evy motion in an energy landscape,
\emph{Phys. Rev. E} \textbf{93} (2016), 022135

\bibitem{KL17}
K. Kaleta, J. L\H{o}rinczi: Fall-off of eigenfunctions for non-local Schr\"odinger operators with decaying potentials,
\emph{Potential Anal.} \textbf{46} (2017), 647-688

%\bibitem{bib:KS13}
%K. Kaleta, P. Sztonyk:
%Estimates of transition densities and their derivatives for jump L\'evy processes,
%\emph{J. Math. Anal. Appl.} \textbf{431} (2015), 260-282

\bibitem{K89}
C.E. Kenig:
Restriction theorems, Carleman estimates, uniform Sobolev inequalities and unique continuation, in: \emph{Harmonic
Analysis and Partial Differential Equations (El Escorial, 1987)}, in: Harmonic Analysis and Partial Differential
Equations (J. Garc\'ia-Cuerva, ed.), Lecture Notes in Mathematics \textbf{1384}, Springer, 1989, pp. 69-90

\bibitem{KN00}
C.E. Kenig, N. Nadirashvili:
A counterexample in unique continuation, \emph{Math. Res. Lett.} \textbf{7} (2000), 625-630

\bibitem{K78}
I. Knowles:
On the number of $L^2$-solutions of second order linear differential equations, \emph{Proc. Roy. Soc. Edinburgh Sect. A}
\textbf{80} (1978), 1-13; On the location of eigenvalues of second-order linear differential operators, \emph{Proc. Roy.
Soc. Edinburgh Sect. A} \textbf{80} (1978), 15-22

%\bibitem{K78b}
%Knowles, I.:  On the location of eigenvalues of second-order linear differential operators, \emph{Proc. Roy. Soc. Edinburgh Sect. A}
%\textbf{80} (1978), 15-22

\bibitem{KT02}
H. Koch, D. T\u ataru: Sharp counterexamples in unique continuation for second order elliptic equations,
\emph{J. reine angew. Math.} \textbf{542} (2002), 133-146


\bibitem{L81}
E. Lieb:
Thomas-Fermi and related theories of atoms and molecules, \emph{Rev. Mod. Phys.} \textbf{53} (1981), 603-641

\bibitem{LHB}
J. L\H{o}rinczi, F. Hiroshima, V. Betz:
\emph{Feynman-Kac-Type Theorems and Gibbs Measures on Path Space. With Applications to Rigorous Quantum
Field Theory}, de Gruyter Studies in Mathematics \textbf{34}, Walter de Gruyter, 2011; 2nd edition forthcoming

\bibitem{LS17}
J. L\H{o}rinczi, I. Sasaki:
Embedded eigenvalues and Neumann-Wigner potentials for relativistic Schr\"odinger operators, \emph{J. Funct. Anal.}
\textbf{273} (2017), 1548-1575

\bibitem{LS}
J. L\H{o}rinczi, I. Sasaki:
Absence of embedded eigenvalues for a class of non-local Schr\"odinger operators, preprint (2017)

\bibitem{M06}
M. Maceda:
On the Birman-Schwinger principle applied to $\sqrt{-\Delta + m^2} - m$, \emph{J. Math. Phys.} \textbf{47} (2006),
033506

\bibitem{N94}
S. Nakamura:
Low-energy asymptotics for Schr\"odinger operators with slowly decreasing potentials, \emph{Comm. Math. Phys.}
\textbf{161} (1994), 63-76

\bibitem{N77}
R.G. Newton:
Nonlocal interactions: The generalized Levinson theorem and the structure of the spectrum, \emph{J. Math. Phys.}
\textbf{18} (1977), 1348-1357, 1582-1588

\bibitem{bib:Pru}
W.E. Pruitt:
The growth of random walks and L\'evy processes, \emph{Ann. Probab.} \textbf{9} (1981), 948-956


\bibitem{R87}
A.G. Ramm:
Sufficient conditions for zero not to be an eigenvalue of the Schr\"odinger operator, \emph{J. Math. Phys.}
\textbf{28} (1987), 1341-1343; \emph{J. Math. Phys.} \textbf{29} (1988), 1431-1432


\bibitem{RS3}
M. Reed, B. Simon:
\emph{Methods of Modern Mathematical Physics}, vols. 3-4, Academic Press, 1979


\bibitem{RU12}
S. Richard, T. Umeda:
Low energy spectral and scattering theory for relativistic Schr\"odinger operators,
\emph{Hokkaido Math.~J.}, \textbf{45} (2016), 141-179


\bibitem{R15}
A. R\"uland:
Unique continuation for fractional Schr\"odinger equations with rough potentials, \emph{Comm. Part. Diff. Eqs.}
\textbf{40} (2015), 77-114

\bibitem{bib:Sat}
K.I. Sato:
\emph{L{\'e}vy Processes and Infinitely Divisible Distributions}, Cambridge University Press, 1999


\bibitem{bib:Sch}
R. Schilling:
Growth and H\"older conditions for the sample paths of Feller processes, \emph{Probab. Theory Relat. Fields}
\textbf{112} (1998), 565-611

\bibitem{S14}
I. Seo:
On unique continuation for Schr\"odinger operators of fractional and higher orders, \emph{Math. Nachr.} \textbf{287}
(2014), 699-703

\bibitem{S15}
I. Seo:
Unique continuation for fractional Schr\"odinger operators in three and higher dimensions, \emph{Proc. AMS} \textbf{143}
(2015), 1661-1664

\bibitem{SW}
E. Skibsted, X.P. Wang:
Two-body threshold spectral analysis, the critical case, \emph{J. Funct. Anal.} \textbf{260} (2011), 1766-1794


\bibitem{S81}
B. Simon:
Large time behavior of the $L^p$ norm of Schr\"odinger semigroups, \emph{J. Funct. Anal.} \textbf{40} (1981), 66-83

\bibitem{S10}
P. Sztonyk:
Estimates of tempered stable densities, \emph{J. Theor. Probab.} \textbf{23} (2010), 127-147

\bibitem{Y82}
D. Yafaev:
The low energy scattering for slowly decreasing potentials, \emph{Comm. Math. Phys.} \textbf{85} (1982), 177-196

\end{thebibliography}
\end{document}